\newcommand{\wh}{\widehat}
\newcommand{\wt}{\widetilde}
\renewcommand{\rm}{\mathrm}
\newcommand{\abs}[1]{\left\lvert#1\right\rvert}
\newcommand{\pa}[1]{\left( #1 \right)}
\newcommand{\rpa}[1]{\left[ #1 \right]}
\newcommand{\br}[1]{\left\lbrace #1\right\rbrace}
\newcommand{\R}{\mathbb{R}}
\newcommand{\N}{\mathbb{N}}
\newcommand{\norm}[1]{\left\lVert#1\right\rVert}
\renewcommand{\phi}{\varphi}
\renewcommand{\epsilon}{\varepsilon}
\newtheorem{theorem}{Theorem}
\newtheorem{lemma}{Lemma}
\newtheorem{proposition}{Proposition}
\newtheorem{corollary}{Corollary}
\newtheorem{remark}{Remark}
\title{Indirect diffusion effect in degenerate reaction--diffusion systems}
\author[A. Einav]{Amit Einav}
\address{Amit Einav \hfill\break
	Institute of Mathematics and Scientific Computing, University of Graz, 
	Heinrichstrasse 36, 8010 Graz, Austria}
\email{amit.einav@uni-graz.at}
\author[J. Morgan]{Jeff Morgan}
\address{Jeff Morgan \hfill\break
	Department of Mathematics, 
	University of Houston, Houston, Texas 77004, USA}
\email{jmorgan@math.uh.edu}
\author[B.Q. Tang]{Bao Quoc Tang}
\address{Bao Quoc Tang \hfill\break
	Institute of Mathematics and Scientific Computing, University of Graz, 
	Heinrichstrasse 36, 8010 Graz, Austria}
\email{quoc.tang@uni-graz.at, baotangquoc@gmail.com}
\begin{document}
\begin{abstract}
In this work we study global well-posedness and large time behaviour for a typical reaction--diffusion system, which include degenerate diffusion, and whose non-linearities arise from chemical reactions. We show that there is an {\it indirect diffusion effect}, i.e. an effective diffusion for the non-diffusive species which is incurred by a combination of diffusion from diffusive species and reversible reactions between the species. Deriving new estimates for such degenerate reaction-diffusion system, we show, by applying the entropy method, that the solution converges exponentially to equilibrium, and provide explicit convergence rates and associated constants.
\end{abstract}

\maketitle

\tableofcontents

\section{Introduction}\label{sec:intro}
\subsection{Reaction-diffusion equations}
In the past few decades, the interest in reaction-diffusion equations and systems has increased, both due to their prevalence in many chemical and physical instances, and their mathematical beauty and intricacy. 
In general, reaction-diffusion equations describe the behaviour of a physical or chemical system where two fundamental processes are competing against each other: the diffusion of matter in the system, and the chemical/physical reactions that govern it.

Many times in the context of reversible chemical processes one encounters the situation where $n$ different chemical compounds, $\br{\mathcal{U}_i}_{i=1,\dots,n}$,  diffuse in a certain medium, while still obeying the reaction    
\begin{equation}\nonumber
\alpha_1 \mathcal{U}_1+\dots+\alpha_n\mathcal{U}_n \underset{k_f}{\overset{k_b}{\leftrightarrows}} \beta_1 \mathcal{U}_1+\dots+\beta_n\mathcal{U}_n
\end{equation}
with stoichiometric coefficients $\alpha_i, \beta_i \in \{0\}\cup [1,\infty)$ for $i=1,\ldots, n$, where $k_f>0, k_b>0$ are the reaction rate constants. We would like to mention that it is common in chemistry to only consider stoichiometric coefficients that are natural numbers. By applying {\it mass action kinetics}, the above system is modelled by the following (reaction-diffusion) system for the concentration functions, $u_i(x,t)$, of the compound $\mathcal{U}_i$, which are non-negative functions defined in the domain of the problem, $\Omega \subset \R^N$, and which satisfy a homogeneous Neumann boundary condition on the boundary of that domain:
\begin{equation}\label{eq:general_chemical_rd}
\begin{split}
\partial_t u_i-d_i &\Delta u_i= \pa{\beta_i-\alpha_i}\pa{k_f \prod_{i=1}^n u_i^{\alpha_i}-k_b \prod_{i=1}^n u_i^{\beta_i}},\quad  i=1,\dots,n,\; x\in \Omega,\; t>0
\end{split}
\end{equation}
\begin{equation}\label{eq:general_chemical_neumann}
\nabla u_i \cdot \nu = 0,\quad x\in \partial \Omega,\; t>0,
\end{equation}
where  $\br{d_i}_{i=1,\dots,n}\geq 0$ are the diffusion coefficients, and $\nu(x)$ is the outward normal vector to $\partial \Omega$ at a given point $x\in \partial \Omega$.

\medskip
Clearly, the difficulty in analysing this system increases greatly with $n$ and with the degree of the polynomial that appears on the right hand side of \eqref{eq:general_chemical_rd}. It is not even clear if there is a global solution to this system, although as most of the chemical reactions we view in real life do not end up in an explosion, we have a strong intuition that this is indeed the case (see \cite{PS00} for an "artificial chemical reaction system" in which the solution blows up in $L^\infty$ norm in finite time despite the dissipation of total mass). The issues one must face become more complicated when possible degeneracies in the diffusion of some of the compounds are considered. Such situations have been reported in several works, such as those modelling optical sensors or biosensors (see e.g. \cite{BIK09,JJMS96,MGM98}). However, one can intuitively guess that due to the interactions between the concentrations, governed by the chemical reactions, an effect of \textit{indirect diffusion} will be observed, i.e. a concentration that does not have self diffusion will nonetheless exhibit diffusive like behaviour. Showing that such an intuition is indeed correct, at least as it manifests itself in the question of convergence to equilibrium, is the goal of this presented work. 

\subsection{The setting of the problem}
In our work we will focus on a specific type of reaction-diffusion system of the form \eqref{eq:general_chemical_rd} with $n=4$ and 
$$\alpha_1=\alpha_2=\beta_3=\beta_4=1,\quad \alpha_3=\alpha_4=\beta_1=\beta_2=0,$$
meaning that the compounds $\mathcal{U}_1$ and $\mathcal{U}_2$ completely change to $\mathcal{U}_3$ and $\mathcal{U}_4$, and vice versa:
\begin{equation}\label{binary_reaction}
	\mathcal{U}_1+\mathcal{U}_2 \leftrightarrows \mathcal{U}_3 + \mathcal{U}_4.
\end{equation}
As our focus in this work is on the indirect diffusion effect, we will assume for simplicity that the reaction rate constants are both one, i.e. $k_f = k_b = 1$. This simplification should bear no real impact on our result. In this case, the reaction-diffusion system with {\it full diffusion}, i.e. $d_i > 0$ for $i=1,\ldots, 4$, reads as
\begin{equation}\label{eq:sys_full}
\begin{cases}
\partial_t u_1 - d_1\Delta u_1 &= -u_1u_2 + u_3u_4, \quad x\in\Omega, \; t>0,\\
\partial_t u_2 - d_2\Delta u_2 &= -u_1u_2 + u_3u_4, \quad x\in\Omega, \; t>0,\\
\partial_t u_3 - d_3\Delta u_3 &= +u_1u_2 - u_3u_4, \quad x\in\Omega, \; t>0,\\
\partial_t u_4 - d_4\Delta u_4 &= +u_1u_2 - u_3u_4, \quad x\in\Omega, \; t>0,\\
\end{cases}	
\end{equation}
together with the homogeneous Neumann boundary conditions
\begin{equation}\label{sys_bc}
	\nabla u_i \cdot \nu = 0, \quad \text{ for all } \quad i=1,\ldots,4\quad\text{ and }  x\in\partial\Omega, \; t>0,
\end{equation}
and non-negative initial data
\begin{equation}\label{sys_id}
	u_i(x,0) = u_{i,0}(x) \geq 0 \quad \text{ for all } \quad i=1,\ldots, 4, \; x\in\Omega.
\end{equation}

The key feature of the system we will focus on, is the fact that one of the compounds, say $\mathcal{U}_4$ without loss of generality, will have no self diffusion. That means that, we shall study the well-posedness and large time behaviour of the following system:
\begin{equation}\label{eq:sys}
\begin{cases}
\partial_t u_1 - d_1\Delta u_1 &= -u_1u_2 + u_3u_4, \quad x\in\Omega, \; t>0,\\
\partial_t u_2 - d_2\Delta u_2 &= -u_1u_2 + u_3u_4, \quad x\in\Omega, \; t>0,\\
\partial_t u_3 - d_3\Delta u_3 &= +u_1u_2 - u_3u_4, \quad x\in\Omega, \; t>0,\\
\partial_t u_4 &= +u_1u_2 - u_3u_4, \quad x\in\Omega, \; t>0,\\
\end{cases}	
\end{equation}
subject to homogeneous Neumann boundary conditions \eqref{sys_bc} \emph{only for $i=1,2,3$} and initial data \eqref{sys_id}.  Thanks to the homogeneous Neumann boundary conditions, one can observe that  \eqref{eq:sys_full} (and also \eqref{eq:sys}), possesses, at least formally, the following conservation laws
\begin{equation}\label{eq:mass_conservation}
\int_{\Omega}(u_i(x,t) + u_j(x,t))dx = M_{ij}:= \int_{\Omega}(u_{i,0}(x) + u_{j,0}(x))dx, \quad \forall \; t\geq 0,
\end{equation}
for $i=1,2$, $j=3,4$. This means that no single mass is conserved, but some combinations of them are. This phenomena is not surprising, as the chemical reactions in the systems move one compound to another. We would also remark that exactly three of the four laws in \eqref{eq:mass_conservation} are linearly independent, and these laws allow us to uniquely identify a unique positive equilibrium which balances the reaction \eqref{binary_reaction}. Intuitively speaking we have two competing long term behaviours of \eqref{eq:sys_full}, corresponding to two processes:
\begin{itemize}
\item The diffusion attempts to distribute the compound evenly on $\Omega$, i.e. it tries to push $\br{u_i(x,t)}_{i=1,\dots,4}$ to become constants;
\item The chemical reaction, on the other hand, tries to push the compound to the point where the reaction is balanced, i.e
$$u_1(x,t)u_2(x,t)=u_3(x,t)u_4(x,t).$$
\end{itemize}
Only the combination of these two processes can drive the system \eqref{eq:sys_full} towards chemical equilibrium. Together with the aforementioned conservation laws, we expect that an equilibrium, if it exists, will be of the form $\bm{u}_\infty = (u_{i,\infty})_{i=1,\ldots, 4} \in (0,\infty)^4$, such that:
\begin{equation}\label{equi_sys}
	\begin{cases}
		u_{1,\infty}u_{2,\infty} = u_{3,\infty} u_{4,\infty},\\
		u_{1,\infty} + u_{3,\infty} = |\Omega|^{-1} M_{13},\\
		u_{1,\infty} + u_{4,\infty} = |\Omega|^{-1}M_{14},\\
		u_{2,\infty} + u_{3,\infty} = |\Omega|^{-1}M_{23},
	\end{cases}
\end{equation}
where the values $M_{ij}$ are defined in \eqref{eq:mass_conservation}.
%

\medskip
Despite a large amount of work dealing with large time behaviour of systems with {\it full diffusion} \eqref{eq:sys_full} (or in larger generality), to our knowledge there has been none so far that has dealt with degenerate systems of the form \eqref{eq:sys}. The main aims in this paper are to show global existence of strong solutions to these degenerate reaction-diffusion systems, and to find a quantitative exponential convergence to equilibrium.


\subsection{The current state of the art}
The study of reaction-diffusion systems is a classical topic (see, for instance, \cite{Rot84}), yet it still poses a lot of challenging open questions. One example, concerning the systems of the form \eqref{eq:general_chemical_rd}, is the global well-posedness of the system when the non-linearities of the system have super quadratic growth. We refer the interested reader to the extensive review \cite{Pie10}. 
The case of a quadratic system with {\it full diffusion}, \eqref{eq:sys_full}, has been investigated extensively: In \cite{GV10}, it was shown that when $N\leq 2$, \eqref{eq:sys_full} has a unique global strong solution. Remaining still in two dimensions, this strong solution has later been shown to be uniformly bounded in time in \cite{CDF14,PSY19}. In higher dimensions, i.e. $N\geq 3$, \cite{DFPV07} showed the global existence of a weak solution, but the uniqueness remains open. In the 2014 work \cite{CDF14}, it was shown that conditional global strong solutions for $N\geq 3$ can be obtained if the diffusion coefficients are ``close enough''. Recently, in three parallel studies \cite{CGV19}, \cite{Sou18} and \cite{FMT19}, the system \eqref{eq:sys_full} was shown to have a unique global strong solution in all dimensions.

The investigation of the long time behaviour of these systems has also bore fruit. The trend to equilibrium for \eqref{eq:sys_full} was first studied in one dimension in \cite{DF08} using the so-called \textit{entropy method} (which we will explain shortly). When dealing with higher dimension, the work \cite{CDF14} have shown that under certain conditions (such as the ``closeness'' of diffusion coefficients) one can again recover some relaxation information. Most notably, in the recent work \cite{FMT19}, the authors have shown that the global strong solution, which they have proven to exist, converges exponentially fast to equilibrium in $L^\infty(\Omega)$-norm. We would also like to mention the work \cite{GZ10}, where the author treated \eqref{binary_reaction} with a probabilistic approach (and not entropic methodology). Due to the global existence and bounds found in \cite{FMT19}, the question of large time behaviour of \eqref{eq:sys_full} in all dimensions is now settled. This can be summed in the following theorem.
\begin{theorem}\label{thm:convergence_no_degen}\cite{FMT19}
Assume that $\Omega\subset\mathbb R^N$, with $N\geq 1$, is a bounded domain with boundary, $\partial \Omega$, that is $C^{2+\epsilon}$ for some $\epsilon>0$, with $\Omega$ lying locally only on one side of $\partial\Omega$. Assume in addition that the diffusion coefficients, $d_i$, $i=1,\ldots, 4$ in \eqref{eq:sys_full} are positive. Then for any bounded, non-negative initial data, $\bm{u}_0 = (u_{i,0})_{i=1,\ldots, 4}$, there exists a unique global strong solution to \eqref{eq:sys_full} which converges exponentially to the equilibrium $\bm{u}_\infty$ defined in \eqref{equi_sys}, in $L^\infty(\Omega)$-norm, i.e.
\begin{equation*}
\sum_{i=1}^4 \norm{u_i(t)-u_{i,\infty}}_{L^\infty\pa{\Omega}} \leq K_1 e^{-\lambda_1 t},\quad t\geq 0,
\end{equation*}
where $K_1,\lambda_1 > 0$ are given explicitly, and depend only on $\Omega$, $\br{d_i}_{i=1,\dots, 4}$, the initial masses $M_{ij}$, and $\br{\norm{u_{i,0}}_{L^\infty(\Omega)}}_{i=1,\dots, 4}$.
\end{theorem}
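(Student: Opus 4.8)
I would split the proof into two largely independent halves: (I) global existence of a unique, nonnegative, \emph{uniformly-in-time bounded} strong solution in every space dimension, and (II) the entropy method producing exponential relaxation, first in $L^1(\Omega)$ and then, after interpolation against the uniform bounds, in $L^\infty(\Omega)$. The output of (I) — a bound $\sup_{t\ge 0}\sum_i\|u_i(t)\|_{L^\infty(\Omega)}\le C$ depending only on $\Omega$, the $d_i$, the masses $M_{ij}$, and $\|u_{i,0}\|_{L^\infty(\Omega)}$ — is exactly what is needed as the regularity input for (II).

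\textbf{Part I: global existence and uniform bounds.} Local existence and uniqueness of a classical solution on a maximal interval $[0,T_{\max})$ is standard for triangular quasi-positive reaction--diffusion systems, and nonnegativity is preserved because each $f_i$ satisfies $f_i(\bm u)\ge 0$ whenever $u_i=0$ and $\bm u\ge 0$. The only obstruction to $T_{\max}=\infty$ is an a priori $L^\infty$ bound. First, the conservation laws \eqref{eq:mass_conservation} give uniform $L^1(\Omega)$ bounds. Next, the ``intermediate sums'' $u_1+u_3,\ u_1+u_4,\ u_2+u_3,\ u_2+u_4$ solve equations of the form $\partial_t w-\Delta(a(x,t)w)=0$ with $\min_i d_i\le a\le\max_i d_i$ (using $f_i+f_j\equiv 0$ for $i\in\{1,2\}$, $j\in\{3,4\}$), so the Pierre--Schmitt duality method yields $\bm u\in L^2(\Omega\times(0,T))$, and in fact $L^{2+\delta}$ for some small $\delta>0$ via the improved duality estimate. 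Since the nonlinearities are quadratic, one then bootstraps: feeding an $L^p$ space--time bound into maximal $L^p$-regularity for the heat semigroup raises the exponent, and the sign structure (both reaction terms are $\pm(u_1u_2-u_3u_4)$) is what lets each induction step close, eventually giving $\bm u\in L^p(\Omega\times(0,T))$ for all $p<\infty$ and then, by a final Moser/semigroup smoothing step, the desired uniform $L^\infty$ bound. This bootstrap, together with the improved duality estimate powering it, is the genuinely hard part (it is the core of \cite{FMT19}), and is precisely what removes the dimension restriction $N\le 2$ present in earlier works.

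\textbf{Part II: entropy method and the EED inequality.} Introduce the relative entropy $\mathcal E[\bm u\,|\,\bm u_\infty]=\sum_{i=1}^4\int_\Omega\big(u_i\log\tfrac{u_i}{u_{i,\infty}}-u_i+u_{i,\infty}\big)\,dx\ge 0$. A direct computation along \eqref{eq:sys_full}, using $u_{1,\infty}u_{2,\infty}=u_{3,\infty}u_{4,\infty}$ from \eqref{equi_sys}, gives $-\tfrac{d}{dt}\mathcal E=\mathcal D[\bm u]:=\sum_{i=1}^4 d_i\int_\Omega\tfrac{|\nabla u_i|^2}{u_i}\,dx+\int_\Omega(u_1u_2-u_3u_4)\log\tfrac{u_1u_2}{u_3u_4}\,dx\ge 0$. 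The heart of the matter is the entropy--entropy-dissipation inequality: there exists $\lambda_1>0$, explicit in $\Omega$, the $d_i$, and the $M_{ij}$, such that $\mathcal D[\bm u]\ge\lambda_1\,\mathcal E[\bm u\,|\,\bm u_\infty]$ for every nonnegative $\bm u$ obeying \eqref{eq:mass_conservation}. I would prove this by: (a) controlling $\|u_i-\overline{u_i}\|_{L^2}^2$, the deviation from the spatial average, by the diffusive part of $\mathcal D$ via Poincaré and an ``$L\log L$-to-$L^2$'' inequality; (b) controlling the residual imbalance $|\overline{u_1}\,\overline{u_2}-\overline{u_3}\,\overline{u_4}|^2$ of the averages by the reaction part; and (c) using the three independent conservation laws together with \eqref{equi_sys} to show these quantities jointly dominate $\sum_i\|u_i-u_{i,\infty}\|_{L^2}^2$, which in turn dominates $\mathcal E[\bm u\,|\,\bm u_\infty]$ because the uniform $L^\infty$ bound from Part I makes $s\log(s/s_\infty)-s+s_\infty\le C|s-s_\infty|^2$ on the relevant range. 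Gronwall's lemma then yields $\mathcal E[\bm u(t)\,|\,\bm u_\infty]\le\mathcal E[\bm u_0\,|\,\bm u_\infty]\,e^{-\lambda_1 t}$.

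\textbf{From $L^1$ to $L^\infty$.} The Csisz\'ar--Kullback--Pinsker inequality, using the uniform $L^1$ bounds, converts entropy decay into $\sum_i\|u_i(t)-u_{i,\infty}\|_{L^1(\Omega)}^2\le C\,\mathcal E[\bm u(t)\,|\,\bm u_\infty]\le C'e^{-\lambda_1 t}$. To upgrade to $L^\infty$, I would combine this with uniform-in-time higher-regularity bounds that follow from Part I by parabolic smoothing (e.g. a uniform bound on $\|\bm u(t)\|_{W^{1,p}(\Omega)}$ or $\|\bm u(t)\|_{C^\alpha(\overline\Omega)}$ for $t\ge 1$), via a Gagliardo--Nirenberg interpolation $\|f\|_{L^\infty}\le C\|f\|_{L^1}^{\theta}\|f\|_{W^{1,p}}^{1-\theta}$ applied to $f=u_i-u_{i,\infty}$; this gives $\sum_i\|u_i(t)-u_{i,\infty}\|_{L^\infty(\Omega)}\le K_1 e^{-\lambda_1\theta t/2}$, i.e. the asserted exponential convergence with explicit (possibly reduced) rate after renaming constants. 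I expect the main obstacle to be Part I — the uniform $L^\infty$ bound for $N\ge 3$ — while within Part II the delicate point is obtaining an EED constant $\lambda_1$ that does not degenerate, which forces one to exploit carefully that exactly three of the four conservation laws in \eqref{eq:mass_conservation} are independent.
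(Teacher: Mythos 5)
Theorem~\ref{thm:convergence_no_degen} is not proved in this paper: it is quoted verbatim from \cite{FMT19} (with the entropy part resting on \cite{FT17,DFT17}) and serves only as background for the degenerate system that is actually studied here. So there is no in-paper proof to compare against; I can only assess your sketch against what the cited works actually do. Your overall architecture --- (I) global existence with uniform-in-time $L^\infty$ bounds, (II) an entropy--entropy-dissipation inequality yielding exponential decay of the relative entropy, then Csisz\'ar--Kullback--Pinsker plus interpolation/smoothing to pass to $L^\infty$ --- is indeed the correct and standard decomposition, and Part II as you describe it is essentially the argument of \cite{FT17} combined with the smoothing/interpolation step that this paper also uses (Proposition \ref{stretched_exp_1} and the proof of the main theorems).

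The genuine gap is in Part I, precisely at the step you (correctly) identify as the hard one but then sketch with a mechanism that does not close. Starting from the improved duality bound $\bm u\in L^{2+\delta}(\Omega_T)$, the quadratic nonlinearity gives a source in $L^{(2+\delta)/2}$, and maximal $L^p$-regularity (as in Lemma \ref{lem:polynomial}) maps a source in $L^p$ to a solution in $L^s$ with $s<\frac{(N+2)p}{N+2-2p}$; the resulting exponent-raising map $p\mapsto \frac{(N+2)(p/2)}{N+2-p}$ increases the exponent \emph{only when} $p>\frac{N+2}{2}$. For $N\ge 3$ one has $\frac{N+2}{2}\ge \frac52>2+\delta$, so the iteration you describe stalls rather than climbs --- this is exactly why the present paper must impose the quasi-uniformity condition \eqref{quasi-uniform} (which buys $p_0>\frac{N+2}{2}$ directly) for its degenerate system in $N\ge3$, and why \cite{FMT19}, \cite{CGV19} and \cite{Sou18} each had to introduce genuinely new devices ($L^p$-energy estimates, De Giorgi-type iteration, etc.) rather than a duality-plus-bootstrap loop. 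A second, smaller point: global existence and a \emph{uniform-in-time} $L^\infty$ bound are distinct; even once $T_{\max}=\infty$ is secured one must redo the estimates on unit time cylinders (as this paper does in Lemmas \ref{lem:some_L_p_estimaes_for_123}--\ref{lem:L_infty_bound_on_u_3}) to prevent the constants from growing with $T$, since the uniform bound is what feeds both the EED constant and the final interpolation. As written, your proposal asserts the conclusion of the hardest step without an argument that would actually deliver it.
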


The interested reader can find the most recent advances in the study of the trend to equilibrium in general chemical reaction-diffusion systems (with full diffusion) in \cite{MHM14,FT17,DFT17,FT18,Mie16,MM18}.

\medskip
Needless to say, the analysis of the degenerate system \eqref{eq:sys} is much less satisfactory. The first work studying global existence of \eqref{eq:sys} is \cite{DFPV07} in which a global weak solution was shown in all dimensions. Global strong solutions in one and two dimensions were then shown in \cite{DF15} and \cite{CDF14} respectively. In higher dimensions, global strong solutions were also obtained in \cite{DF15} under an additional condition that the diffusion coefficients are close to each other. 

The convergence to equilibrium for the degenerate system \eqref{eq:sys} is, on the other hand, completely open, as far as we know.
As the large time behaviour of the reversible reaction \eqref{binary_reaction} is the combination of diffusion and chemical reactions, the presence of \textit{diffusion in all species} plays an important role in the commonly used entropy method, with which a convergence to equilibrium is sought. When some diffusion is missing, things become much more involved. In some preliminary works which treated special systems of two species, see, for instance, \cite{DF07} or \cite{FLT18}, the authors utilised a phenomenon, which they called the {\it indirect diffusion effect}, to get convergence to equilibrium using the entropy method. Roughly speaking, the idea behind this phenomenon is that a combination of reversible reactions and diffusion of diffusive species lead to some ``diffusion effect" for the non-diffusive species. While this intuition is natural, showing it quantitatively in the form of a functional inequality, which is the heart of the entropy method, turns out to quite complicated in most cases (for the simple linear case, we refer the reader to \cite{FPT17}). It is worth mentioning, as it will play a role in our investigation as well, that in both \cite{DF07} and \cite{FLT18}, the uniform-in-time bounds of $L^\infty(\Omega)$-norm of solutions are essential. 

In this paper, we push forward the theory of indirect diffusion effects by investigating the degenerate system \eqref{eq:sys}. 
We study the well-posdeness of the system and find suitable estimates for the solution which, while weaker than a uniform $L^\infty\pa{\Omega}$ bound, will suffice for our application of the entropy method. Remarkably, once these estimates are found, and the entropy method is successfully applied to obtain convergence to equilibrium, we are able to use our explicit convergence to revisit our estimates on the solutions to \eqref{eq:sys_full} and improve them up to a uniform $L^\infty(\Omega)$ bound, which will then improve the convergence rate of the entropy method, and yield an exponential convergence. 

\subsection{Main results and key ideas}
The main results of our work concern themselves with the existence, uniqueness and long time behaviour of solutions to our problems. In particular, one can ask oneself what type of solutions one can achieve. As our goal here is not only to deal with the system of PDEs, but also to show the intimate connection to the notions of entropy and indirect diffusion, we elected to consider only classical solutions - that is solutions that are at least $C^{2,1}$ ($C^2$ in space and $C^1$ in time), and solve the differential equations, as well as satisfy the boundary conditions, pointwise (in the classical sense). Many of our proofs can be extended to weaker notions of solutions, yet we leave this discussion out of our present work.\\
In addition, to simplify some of the writing, we decided to employ a couple of notions:
\begin{itemize}
\item We will often refer to conditions \eqref{sys_bc} \textit{only for $i=1,2,3$} as \textit{the compatibility conditions} of our system of equations. Note that without diffusion, the equation for $u_4$ acts as an ODE, which is why we require no boundary conditions. 
\item We will use the notion of \textit{smoothness} to indicate that our initial data and domain itself guarantee the existence of a local classical solution. Following theorems from \cite{Ama85} (as shall be seen in the proofs), this will be valid when the initial data is in $C^2\pa{\overline{\Omega}}$ and that the boundary of $\Omega$, $\partial \Omega$, is $C^{2+\epsilon}$ for some $\epsilon>0$, with $\Omega$ lying locally only on one side of $\partial \Omega$.
\end{itemize}
An important consequence of having classical solutions is that the formal mass conservation equations, given by \eqref{eq:mass_conservation}, are no longer formal but exact, as one can easily differentiate the appropriate integrals. \\
The main results of this paper are the following two theorems.
\begin{theorem}[$N=1,2$]\label{thm:main}
Let $N=1,2$ and let $\Omega$ be a bounded domain of $\R^{N}$ with smooth boundary, $\partial \Omega$. Then for any smooth, non-negative initial data $\br{u_{i,0}(x)}_{i=1,\dots,4}$ that satisfies the compatibility condition, \eqref{eq:sys} has a unique, classical, global non-negative, bounded solution $\br{u_i}_{i=1,\ldots, 4}$ which satisfies the exponential convergence to equilibrium
\begin{equation*}
	\sum_{i=1}^4\norm{u_i(t) - u_{i,\infty}}_{L^\infty(\Omega)} \leq K_2e^{-\lambda_2 t} \quad \text{ for all } \quad t\geq 0,
\end{equation*}
where $K_2>0$ and $\lambda_2>0$ are {\normalfont explicit} constants depending only on $N$, $\Omega$, the initial masses $M_{ij}$, and the diffusion coefficients $d_1, d_2$ and $d_3$. Here $\bm{u}_\infty$ is the unique positive equilibrium determined by the equilibrium system \eqref{equi_sys}.

\end{theorem}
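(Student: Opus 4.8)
The plan is to marry the classical semigroup/comparison theory for partially degenerate parabolic systems with the entropy method, the new ingredient being a Lyapunov‑type functional inequality that quantifies the indirect diffusion effect.

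\emph{Step 1 (local solution, positivity, conservation) and Step 2 (global existence for $N\le 2$).} First I would invoke Amann's theory \cite{Ama85}: for smooth data satisfying the compatibility condition, the system \eqref{eq:sys}, with the $u_4$-equation read as an $x$-parametrised ODE, has a unique classical solution on a maximal interval $[0,T_{\max})$. Nonnegativity is preserved because the system is quasi-positive --- on $\{u_i=0\}$ the reaction term equals $u_3u_4\ge 0$ (for $i=1,2$) or $u_1u_2\ge 0$ (for $i=3,4$) --- so a standard invariant region/comparison argument applies, and since the solution is classical the mass laws \eqref{eq:mass_conservation} hold exactly, giving uniform-in-time $L^1(\Omega)$ bounds. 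To show $T_{\max}=\infty$ for $N=1,2$ I would use that $d_4=0$ makes the fourth equation an ODE with $\partial_t u_4\le u_1u_2$, hence $u_4(t)\le u_{4,0}+\int_0^t u_1u_2$; applying the $L^2$ duality estimate of Pierre \cite{Pie10} to $u_1+u_2+u_3+u_4$ controls $\|u_1u_2\|_{L^2((0,T)\times\Omega)}$, and since in dimension $N\le 2$ the heat semigroup turns such an $L^2$ bound into $L^p$ bounds for every $p<\infty$, a bootstrap yields $L^\infty(\Omega)$ bounds on each finite interval; this essentially reproduces \cite{CDF14,DF15}.

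\emph{Step 3 (entropy structure and the indirect diffusion inequality --- the crux).} I would work with the free energy $E(\bm u)=\sum_{i=1}^4\int_\Omega(u_i\ln u_i-u_i+1)\,dx$, whose dissipation along \eqref{eq:sys} is
\begin{equation*}
D(\bm u)=\sum_{i=1}^3 d_i\int_\Omega\frac{\abs{\nabla u_i}^2}{u_i}\,dx+\int_\Omega(u_1u_2-u_3u_4)\ln\frac{u_1u_2}{u_3u_4}\,dx,
\end{equation*}
notably \emph{without} a Fisher term for $u_4$. The heart of the proof is the entropy--entropy dissipation inequality $D(\bm u)\ge\kappa\,(E(\bm u)-E(\bm u_\infty))$, valid for all nonnegative $\bm u$ obeying the three independent conservation laws in \eqref{equi_sys}, with $\kappa$ explicit (depending on $N$, $\Omega$, the masses $M_{ij}$, $d_1,d_2,d_3$ and any fixed upper bound for $\|\bm u\|_{L^\infty}$). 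The mechanism: the three Fisher terms control $u_i-\overline{u_i}$ for $i=1,2,3$ via Poincaré/logarithmic Sobolev, while the reaction term forces $u_1u_2\approx u_3u_4$ pointwise; combining, one shows $u_4$ is quantitatively close to the constant $\overline{u_1}\,\overline{u_2}/\overline{u_3}$ even though it does not diffuse --- this is the indirect diffusion effect cast as a functional inequality, and it is the main obstacle. Once $\|u_4-\overline{u_4}\|_{L^2}$ is bounded by a multiple of $\sqrt{D}$, the reduction to a finite-dimensional inequality among the spatial averages proceeds as in the full-diffusion case \cite{DF08,FMT19}.

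\emph{Uniform-in-time $L^\infty$ bounds.} The constant $\kappa$ above requires a time-independent $L^\infty$ bound on the solution. I would obtain it by combining $\int_0^\infty\sum_{i=1}^3\int_\Omega|\nabla u_i|^2/u_i\,dt<\infty$ (from integrating $D$) with a LaSalle/$\omega$-limit argument to get the qualitative convergence $\bm u(t)\to\bm u_\infty$ in, say, $L^2(\Omega)$, hence a uniform bound for $t$ large; patching with the finite-time bounds of Step 2 then gives the uniform $L^\infty$ bound, which also justifies the regularisations needed to make the entropy identity rigorous.

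\emph{Step 4 (exponential convergence in $L^\infty$).} Gronwall applied to $t\mapsto E(\bm u(t))-E(\bm u_\infty)$ gives exponential decay of the relative entropy, and the Csiszár--Kullback--Pinsker inequality upgrades this to exponential decay of $\|\bm u(t)-\bm u_\infty\|_{L^1(\Omega)}$. For $u_1,u_2,u_3$, interpolating this $L^1$ decay against uniform parabolic $C^{\alpha}(\overline\Omega)$ bounds converts it to $L^\infty$ decay. For $u_4$ I would use the equilibrium identity $u_{1,\infty}u_{2,\infty}=u_{3,\infty}u_{4,\infty}$ to write $\partial_t(u_4-u_{4,\infty})=-u_3\,(u_4-u_{4,\infty})+R(t)$ with $\|R(t)\|_{L^\infty(\Omega)}\to 0$ exponentially, and exploit that $u_3(x,t)\ge\delta>0$ for $t$ large (since $u_3\to u_{3,\infty}>0$) to close a Gronwall estimate for $u_4-u_{4,\infty}$ in $L^\infty(\Omega)$; all constants are tracked explicitly, giving $K_2,\lambda_2$. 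The principal difficulty throughout is Step 3: establishing the indirect-diffusion entropy--entropy dissipation inequality with its absent $u_4$-Fisher term, and securing the uniform-in-time bounds on which its constants depend.
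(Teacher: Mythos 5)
Your overall architecture (duality plus bootstrap for global existence, an entropy--entropy dissipation inequality encoding the indirect diffusion effect, CKP and smoothing to upgrade to $L^\infty$, and an ODE/Gronwall argument for $u_4$ using $u_3\ge\delta>0$ for large times) matches the paper's. The genuine gap is in your paragraph on \emph{uniform-in-time $L^\infty$ bounds}, which is exactly the obstruction the paper is built to circumvent. The bound $\int_0^\infty D(\bm u(t))\,dt\le H(\bm u_0|\bm u_\infty)<\infty$ only forces $D(\bm u(t_n))\to 0$ along a sequence; upgrading to a LaSalle/$\omega$-limit statement requires compactness of the trajectory, and $u_4$ has no diffusion, hence no smoothing and no a priori equicontinuity --- its $L^\infty$ norm can a priori grow linearly since $\partial_t u_4\le u_1u_2$. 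Even granting qualitative $L^2(\Omega)$ convergence, $L^2$ smallness of $u_4-u_{4,\infty}$ is perfectly compatible with $\|u_4(t)\|_{L^\infty}$ being unbounded, and "patching with the finite-time bounds of Step 2" does not help because those bounds grow with $T$. Worse, the damping $-u_3u_4$ that would tame $u_4$ is only effective once one knows $u_3(x,t)\ge \tfrac12 u_{3,\infty}$ for large $t$, which is itself a consequence of quantitative $L^\infty$ convergence of $u_3$ --- i.e.\ of the entropy estimate whose constant $\kappa$ you are trying to fix with the uniform bound. As written, the argument is circular.

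The paper breaks this circle by proving a \emph{weaker, norm-dependent} functional inequality: $H/D\lesssim (1+\max_i\log(\|u_i\|_{L^\infty}+1))(1+\max_{i=1,4}\|u_i\|_{L^q})$ with $q=1+\gamma$ for $N=2$ (Theorem \ref{thm:entropy_method}), pairs it with the provable \emph{algebraic-in-time} growth bounds \eqref{eq:bound_u124}--\eqref{eq:bound_u_1_u_2} to get only a \emph{stretched} exponential entropy decay (Theorem \ref{thm:convergence_general}), and then bootstraps: CKP gives stretched exponential $L^1$ decay, interpolation against the algebraic $L^\infty$ growth gives $L^p$ decay, Duhamel with the heat semigroup gives $L^\infty$ decay for $u_1,u_2,u_3$, which finally yields the lower bound on $u_3$ and hence the uniform $L^\infty$ bound on $u_4$; only then is the inequality rerun with a genuine constant to obtain the exponential rate. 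You would need to insert this intermediate stretched-exponential stage (or an equivalent device) to make your Step 3 legitimate. A smaller technical slip: the $L^2$ duality estimate applied to $u_1+u_2+u_3+u_4$ is problematic because the effective diffusivity $\sum d_iu_i/\sum u_i$ degenerates where $u_4$ dominates; the paper instead applies (improved) duality to the non-degenerate pairs $u_1+u_3$ and $u_2+u_3$, and $L^2$ bounds on $u_1,u_2$ give $u_1u_2\in L^1(\Omega_T)$, not $L^2(\Omega_T)$ --- one needs the improved duality exponent $p_0>2$ to start the bootstrap in $N\le 2$.
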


\begin{theorem}[$N\geq 3$]\label{thm:main-3D}
Let $N\geq 3$ and let $\Omega$ be a bounded domain of $\R^N$ with smooth boundary, $\partial \Omega$. There exists an explicit $\delta > 0$ such that if
\begin{equation}\label{quasi-uniform_theorem}
	\frac{|d_i-d_3|}{d_i+d_3} < \delta \quad \text{ for } \quad i = 1 \quad \text{ or } \quad i=2,
\end{equation}
then for any smooth, non-negative initial data $\br{u_{i,0}(x)}_{i=1,\dots,4}$  that satisfies the compatibility condition, \eqref{eq:sys} has a unique, classical, global non-negative, bounded solution $\br{u_i}_{i=1,\ldots, 4}$ which satisfies the exponential convergence to equilibrium
\begin{equation*}
	\sum_{i=1}^4\norm{u_i(t) - u_{i,\infty}}_{L^\infty(\Omega)} \leq K_3e^{-\lambda_3 t} \quad \text{ for all } \quad t\geq 0,
\end{equation*}
where $K_3>0$ and $\lambda_3>0$ are {\normalfont explicit} constants depending only on $\delta$, $N$, $\Omega$, the initial masses $M_{ij}$, and the diffusion coefficients $d_1, d_2$ and $d_3$. Here $\bm{u}_\infty$ is the unique positive equilibrium determined by the equilibrium system \eqref{equi_sys}.
\end{theorem}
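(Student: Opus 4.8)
The plan is to reduce Theorem~\ref{thm:main-3D} to the same entropy scheme that proves Theorem~\ref{thm:main}, the only genuinely new point being global existence together with a priori bounds in dimension $N\ge 3$, where the quasi-uniformity hypothesis \eqref{quasi-uniform_theorem} does the work. First I would use Amann's theorem \cite{Ama85} to obtain a unique maximal classical solution on $[0,T_{\max})$; nonnegativity follows from the quasi-positivity of the reaction terms (each right-hand side is $\ge 0$ whenever the corresponding component vanishes and the others are nonnegative), via the maximum principle for $u_1,u_2,u_3$ and the ODE representation for $u_4$. Since the solution is classical, the conservation laws \eqref{eq:mass_conservation} are exact and give uniform-in-time $L^1(\Omega)$ bounds.

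The heart of the $N\ge 3$ argument is to promote these $L^1$ bounds to uniform-in-time bounds in norms strong enough to run the entropy estimates --- weaker than $L^\infty(\Omega)$, as announced in the introduction, but strong enough nonetheless. Assume without loss of generality that $|d_1-d_3|/(d_1+d_3)<\delta$. Adding the first and third equations of \eqref{eq:sys} yields $\partial_t(u_1+u_3)-\Delta\big(a(x,t)(u_1+u_3)\big)=0$, where $a=\tfrac{d_1u_1+d_3u_3}{u_1+u_3}$ takes values in $[\min(d_1,d_3),\max(d_1,d_3)]$, so $a$ is \emph{quasi-uniform} with oscillation ratio $<\delta$. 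The improved duality lemma (in the spirit of \cite{CDF14,DF15}) then gives space-time $L^p$ bounds on $u_1+u_3$, hence on $u_1,u_3$ separately, for $p=p(\delta)\to\infty$ as $\delta\to0$, which bootstraps through the heat semigroup to uniform-in-time $L^p(\Omega)$ bounds. To close the remaining two components I would use the Duhamel/ODE identity
\[
u_4(x,t)=u_{4,0}(x)e^{-\int_0^tu_3}+\int_0^t u_1u_2\,e^{-\int_s^t u_3}\,ds\le u_{4,0}(x)+\int_0^t(u_1u_2)(x,s)\,ds,
\]
coupled with $\partial_t u_2-d_2\Delta u_2\le u_3u_4$, iterating the heat-regularisation estimates for the pair $(u_2,u_4)$ with $u_1,u_3$ treated as known coefficients; this closes the bootstrap in the $L^p$ scale and forces $T_{\max}=\infty$.

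With uniform-in-time bounds available, the rest is the entropy method exactly as for Theorem~\ref{thm:main}. For the relative entropy $E(\bm{u}\mid\bm{u}_\infty)=\sum_{i=1}^4\int_\Omega\big(u_i\ln(u_i/u_{i,\infty})-u_i+u_{i,\infty}\big)\,dx$ one has $\tfrac{d}{dt}E=-D(\bm{u})$ with $D(\bm{u})=\sum_{i=1}^3 d_i\int_\Omega\frac{|\nabla u_i|^2}{u_i}\,dx+\int_\Omega(u_1u_2-u_3u_4)\ln\frac{u_1u_2}{u_3u_4}\,dx$, and the crux is the entropy--entropy-dissipation inequality $D(\bm{u})\ge\lambda\,E(\bm{u}\mid\bm{u}_\infty)$ with explicit $\lambda>0$ depending only on $\Omega$, the masses $M_{ij}$ and the a priori bounds. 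Gronwall gives $E(t)\le E(0)e^{-\lambda t}$; the Csisz\'ar--Kullback--Pinsker inequality with the mass constraint converts this into exponential $L^1(\Omega)$-convergence of each $u_i$; and interpolating that against the uniform higher-integrability bounds (together with parabolic smoothing for $u_1,u_2,u_3$ and the ODE bound for $u_4$) upgrades it to $L^\infty(\Omega)$-convergence at a possibly smaller but still explicit rate $\lambda_3$. Finally, since $L^\infty$-convergence in particular gives uniform-in-time $L^\infty$ bounds, one may feed these back to sharpen the constants, as in the discussion preceding the theorem.

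The main obstacle is the entropy--entropy-dissipation inequality in the degenerate setting: because $d_4=0$, $D(\bm{u})$ carries \emph{no} gradient term for $u_4$, so the spatial oscillation of $u_4$ is not directly controlled. The indirect-diffusion mechanism must be quantified here: one estimates $\|\sqrt{u_4}-\overline{\sqrt{u_4}}\|_{L^2(\Omega)}$ by combining the reaction term, which controls $\|\sqrt{u_1u_2}-\sqrt{u_3u_4}\|_{L^2(\Omega)}$, the diffusion terms, which control via Poincar\'e the oscillations of $\sqrt{u_1},\sqrt{u_2},\sqrt{u_3}$, and the conservation laws together with $u_{1,\infty},u_{3,\infty}>0$, which prevent $u_1,u_3$ from degenerating --- so that $\sqrt{u_4}$ is forced to be close to the constant $\sqrt{u_3u_4}/\sqrt{u_3}\approx\sqrt{u_1u_2}/\sqrt{u_3}$. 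Turning this heuristic into a clean functional inequality with explicit constants, uniformly over the admissible states, is the technical core; the global-existence bootstrap under \eqref{quasi-uniform_theorem} is the secondary difficulty, there being no a priori $L^\infty$ control to start from when $N\ge 3$.
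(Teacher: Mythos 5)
Your outline matches the paper up to the point where you claim that the duality/bootstrap argument yields \emph{uniform-in-time} $L^p(\Omega)$ bounds for all four components before the entropy method is run, and then invoke $D(\bm u)\ge \lambda H(\bm u|\bm u_\infty)$ with a time-independent $\lambda$ to get exponential decay in one pass. This is the genuine gap. The obstruction is $u_4$: its equation is the ODE $\partial_t u_4=u_1u_2-u_3u_4$, and the damping term is useless until one knows $u_3$ is bounded away from zero, which is only available \emph{after} convergence to equilibrium has been established. Your Duhamel bound $u_4\le u_{4,0}+\int_0^t u_1u_2$ gives at best linear growth in $t$, and since the sources for $u_1,u_2$ involve $u_3u_4$, the "$(u_2,u_4)$ iteration with $u_1,u_3$ as known coefficients" does not close at a uniform-in-time level. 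This is exactly why the paper only proves time-algebraic bounds in Proposition \ref{Global3D}: a uniform $L^\infty$ bound for $u_3$, but only $\|u_i(t)\|_{L^{N/2}}\le C(1+t)^\alpha$ with $\alpha<1$ for $i=1,2,4$ (see \eqref{3D_u124}, obtained via a nontrivial Gronwall-type lemma), and correspondingly the entropy--dissipation inequality \eqref{eq:entropy_method} carries a constant that \emph{degrades} with $\max_i\|u_i\|_{L^q}$, in the form \eqref{indirect_ineq_modified}.

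The paper's resolution, which your proposal skips, is a two-pass structure: the degraded inequality yields only a \emph{stretched} exponential decay $H\le H_0e^{-C(1+t)^{1-\alpha-\varepsilon}}$ (Theorem \ref{thm:convergence_general}); Csisz\'ar--Kullback--Pinsker plus interpolation against the algebraically growing $L^\infty$ bounds and heat-semigroup smoothing upgrades this to stretched exponential $L^\infty$ convergence of $u_1,u_2,u_3$ (Propositions \ref{stretched_exp}, \ref{stretched_exp_1}), which in particular gives the uniform-in-time $L^\infty$ bounds; the resulting lower bound $u_3\ge u_{3,\infty}/2$ for large times then tames $u_4$ (Proposition \ref{Linf_u4}); only at that point can the entropy method be rerun with a constant rate to produce the claimed exponential convergence. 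Your heuristic for the indirect diffusion inequality itself is correct and matches the paper's Lemma \ref{lem:second_D_estimation_N>2}, but as written your argument is circular where it matters: the uniform bounds you feed into the entropy method are a consequence of, not a prerequisite for, the convergence you are trying to prove.
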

\begin{remark}
Condition \eqref{quasi-uniform_theorem} means that either $d_1$ and $d_3$ or $d_2$ and $d_3$ are ``close'' to each other. As was mentioned, the constant $\delta$ can in fact be quantified, and chosen to be 
	\begin{equation*}
		\delta = \frac{1}{C_{\mathrm{mr},p_0'}},
	\end{equation*}
for some $p_0' > 0$ is such that $p_0 = \frac{p_0'}{p_0' - 1} > \frac{N+2}{2}$, and where $C_{\mathrm{mr},p_0'}$ is the optimal constant  in the maximal regularity for a linear parabolic equation (see Lemma \ref{lem_mr}). Note that in fact we have here a family of potential constants, corresponding to $p_0'$ and $C_{\mathrm{mr},p_0'}$, and we only need to find one such constant, for which condition \eqref{quasi-uniform_theorem} is satisfied.
\end{remark}
\begin{remark}\label{rem:mass_bounds}
As was mentioned at the beginning of the subsection, since we have classical solutions, \eqref{eq:mass_conservation} is valid. Tying this together with the non-negativity of the solutions leads to the following set of bounds
\begin{equation}\label{eq:mass_bounds}
\norm{u_i(t)}_{L^1\pa{\Omega}} \leq M_{ij},\quad\quad i=1,2,\;j=3,4,
\end{equation}
We shall use this fact in some of our proofs. 
\end{remark}

Let us briefly sketch the ideas we use to prove Theorems \ref{thm:main} and \ref{thm:main-3D}. 
The global existence of bounded solutions to \eqref{eq:sys} will be shown by duality arguments. More precisely, when $N=1,2$ or when $N\geq 3$ and the required conditions are satisfied, a duality estimate will show that $u_i \in L^p(\Omega\times(0,T))$ for $i=1,2,3$, and some $p > \frac{N+2}{2}$. \\
These estimates are enough to begin a bootstrap process that will end up with $u_i\in L^\infty(\Omega\times(0,T))$ for $i=1,2,3$ and $T>0$ fixed, which in turn will imply that $u_4 \in L^\infty(\Omega\times(0,T))$. 
These results are not surprising, as a similar methodology was used in \cite{CDF14} and \cite{DF15} in the case of one and two dimensions.\\
We would like to emphasise that tools required to achieve the above change drastically between $N=1,2$ and $N\geq 3$ (see for instance Lemma \ref{PW} and Proposition \ref{Global3D}) which is the reason behind the separation of these cases in our main theorems.

\medskip
To consider convergence to equilibrium that is given by the system \eqref{equi_sys}, we need to find a ``distance'' under which the solution converges as time goes to infinity. This aforementioned ``distance'' needs not be a norm or a metric, but a non-negative functional that behaves well under the flow of the equations and will be zero \emph{only} when the solution is at equilibrium. This situation is very common in many physically/chemically motivated systems. In practice what we do is look for a functional for the equation, $H(\bm{u}|\bm{u}_\infty)$, which we call \emph{the entropy} or  \emph{the relative entropy}, such that
\begin{itemize}
\item $H(\bm{u}|\bm{u}_\infty)\geq 0$, and $H(\bm{u}|\bm{u}_\infty)=0$ with $\bm{u}$ satisfying all the conservation laws if and only if $\bm u=\bm u_\infty$.
\item For any solution $\bm u(t)$ to the equation/s, $\frac{d}{dt}H(\bm{u}(t)|\bm{u}_\infty) \leq 0$ (i.e. $H$ is a Lyapunov functional for the system).
\end{itemize}
To find the rate of convergence for the entropy we employ the so-called \emph{entropy method}. Defining the entropy production of $H$ by the functional that formally satisfies 
$$D(\bm u(t))=-\frac{d}{dt}H(\bm u(t)|\bm u_\infty)$$
we forget that $D$ is connected to $H$ via the system, and attempt to find a \emph{functional inequality} that connects $H$ and $D$, of the form
\begin{equation}\label{eede}
D(\bm u) \geq \Psi(H(\bm u|\bm u_\infty))
\end{equation}
for a suitable function $\Psi$. Once such an inequality has been obtained, we recall that under the flow of the equations, $D$ is minus the derivative of $H$, and the above functional inequality becomes a differential inequality which, under some conditions on $\Psi$, yields a quantitative rate of convergence to equilibrium. The case $\Psi(y)=Cy$, which one hopes to get in many physically relevant equations, yields an exponential rate of convergence. An entropy that seems to be appropriate in numerous setting is the so-called \emph{Boltzmann entropy}:
$$H(\bm u|\bm u_\infty)= \int \phi\pa{\frac{\bm u}{\bm u_\infty}}\bm u_\infty dx,$$
where $\phi(x)=x\log x -x +1$. Unsurprisingly, this entropy will be our choice for the entropy of the system \eqref{eq:sys}. More precisely, the relative entropy for \eqref{eq:sys} (and also for \eqref{eq:sys_full}) is
\begin{equation*}
	H(\bm u|\bm u_\infty) = \sum_{i=1}^4\int_{\Omega}\pa{u_i\log\pa{\frac{u_i}{u_{i,\infty}}} - u_i + u_{i,\infty}}dx.
\end{equation*}
A simple calculation (formal in general, but exact when the functions are smooth enough) shows that the entropy production under \eqref{eq:sys} is given by
\begin{equation*}
	D(\bm u) = \sum_{i=1}^3d_i\int_{\Omega}\frac{|\nabla u_i|^2}{u_i}dx + \int_{\Omega}(u_1u_2 - u_3u_4)\log\pa{\frac{u_1u_2}{u_3u_4}}dx
\end{equation*}
where as under \eqref{eq:sys_full} it is given by
\begin{equation*}
	D^{\rm{full}}(\bm u) = \sum_{i=1}^4d_i\int_{\Omega}\frac{|\nabla u_i|^2}{u_i}dx + \int_{\Omega}(u_1u_2 - u_3u_4)\log\pa{\frac{u_1u_2}{u_3u_4}}dx.
\end{equation*}
The first sum in $D(\bm{u})$ (or in $D^{\rm{full}}(\bm{u})$) corresponds to the diffusion of the system, while the second term corresponds to the reversible reaction \ref{binary_reaction}. If all diffusion coefficients are present (i.e. we deal with \eqref{eq:sys_full}), it can be shown that 
\begin{equation*}
	D^{\rm{full}}(\bm u) \geq \lambda H(\bm u|\bm u_\infty),
\end{equation*}
for some $\lambda>0$, which gives an exponential convergence (see e.g. \cite{FT17} or \cite{DFT17}). The degenerate case, however, is more delicate. 

A particular case of only two substances was considered in \cite{DF08} and \cite{FLT18}, where the authors assumed a reversible reaction of the form $\alpha S_1 \leftrightarrows \beta S_2$, $\alpha, \beta \geq 1$, with either $S_1$ or $S_2$ lacking self diffusion. To employ the entropy method, these works utilised the so-called {\it indirect diffusion effect}. Roughly speaking, a combination of diffusion from diffusive species and the reversible reaction leads to some diffusion effect on the non-diffusive species. To successfully show this phenomena, the proofs in \cite{DF08} or \cite{FLT18} essentially used a uniform-in-time $L^\infty(\Omega)$ bound of the solution. Mimicking this strategy, one would like to show for the system considered in this paper that
\begin{equation}\label{IDE}
	D(\bm{u}) \geq \beta\norm{\sqrt{u_4} - \overline{\sqrt{u_4}}}_{L^2(\Omega)}^2,
\end{equation}
assuming that the solution is uniformly bounded in time in $L^\infty(\Omega)$-norm, and $\overline{\sqrt{u_4}}$ is the spatial average of $\sqrt{u_4}$. It is worth noting that when the diffusion of $u_4$ is present, the Poincar\'e-Wirtinger inequality implies that 
$$D^{\rm{full}}(\bm u)  \geq d_4\int_{\Omega}\frac{|\nabla u_4|^2}{u_4}dx = 4d_4\norm{\nabla \sqrt{u_4}}_{L^2(\Omega)}^2 \geq 4d_4C_{\Omega}\norm{\sqrt{u_4} - \overline{\sqrt{u_4}}}_{L^2(\Omega)}^2.$$ Therefore, \eqref{IDE} can be seen as some ``diffusion behaviour" of $u_4$. \\
Unfortunately, as the system \eqref{eq:sys} has degenerate diffusion, it is unclear how to a-priori obtain uniform-in-time $L^\infty(\Omega)$  bounds. Nevertheless, we will be able to show that
\begin{equation}\label{u3Linf}
	\sup_{t\geq 0}\norm{u_3(t)}_{L^\infty(\Omega)} < +\infty
\end{equation}
and
\begin{equation}\label{u124Lq}
	\norm{u_i(t)}_{L^q(\Omega)} \leq C(1+t)^\alpha \quad \text{ for } \quad i=1,2,4,
\end{equation}
with some $C>0$, $0<\alpha<1$ and $q$, which is dimension dependent. 
This will motivate us to (successfully) seek a modified version of \eqref{IDE}, in which the constant $\beta$ is replaced by a function that depends on the $L^q(\Omega)$ norm of the solution, i.e.
\begin{equation}\label{indirect_ineq_modified}
	D(\bm u) \geq \beta\pa{\br{\norm{u_i}_{L^q(\Omega)}}_{i=1,\dots,4}}\norm{\sqrt{u_4} - \overline{\sqrt{u_4}}}_{L^2(\Omega)}^2.
\end{equation}
Finding such inequality, and using the assistance of the Poincar\'e-Wirtinger inequality, we will be able to show that along the flow of \eqref{eq:sys} we have that
\begin{equation*}
	D(\bm{u}(t)) \geq \lambda(t)H(\bm{u}(t)|\bm{u}_\infty)
\end{equation*}
for an explicit non-negative function $\lambda: [0,\infty) \to [0,\infty)$ which satisfies $\lambda(t) \leq C_{\varepsilon}(1+t)^{1-\alpha-\varepsilon}$ for any $0<\varepsilon<1-\alpha$. This will result in a {\it stretched exponential} convergence for the relative entropy, i.e.
\begin{equation*}
	H(\bm{u}(t)|\bm{u}_\infty) \leq H(\bm{u}_0|\bm{u}_\infty)e^{-C_{1,\varepsilon}t^{1-\alpha-\varepsilon}}.
\end{equation*}
While one can stop at this point, the known Csisz\'ar-Kullback-Pinsker inequality (see Lemma \ref{CKP}) implies that the  stretched exponential convergence of the relative entropy passes to the $L^1(\Omega)$ norm of the solution. This fact allows us to interpolate stretched exponential convergence in $L^1(\Omega)$ norm, algebraic growth in $L^\infty(\Omega)$ norm, and the smoothing effect of the heat operator, to obtain stretched exponential convergence in $L^\infty(\Omega)$ norm for $u_1, u_2, u_3$, which will then be transferred to $u_4$ as well. With these newly found bounds we will be able to return to \eqref{indirect_ineq_modified} and obtain an inequality of the form \eqref{IDE}. At this point, similar arguments as in \cite{DF08} and \cite{FLT18} will yield the desired exponential convergence. 
We find this interaction between a-priori bounds and entropic convergence both beautiful and extremely revealing. 
%

\medskip
\noindent{\bf Structure of the paper}. 
As the study of well-posedness and a-priori bounds of \eqref{eq:sys} and the entropy method are, at least at the first iteration, disjoint, we begin with exploring the entropy method and the manifestation of the indirect diffusion phenomena in it in Section \ref{sec:entropy}. 
In Section \ref{sec:bounds} we delve into our reaction-diffusion system and first show the well-posedness of \eqref{eq:sys}, followed by the estimates that are needed to apply the functional inequality from the previous section. Besides their usage in our current study of the long time behaviour of the solutions, we find these estimates to be interesting in their own right. In Section \ref{sec:proof}, we explore the intimate interaction between the entropy and the previously found norm bounds on the solution to significantly improve our initial estimates, and achieve our main theorems. Lastly, in Section \ref{sec:remarks} we discuss what we believe will be a natural path to continue this line of research, and potential extensions to our work.

\medskip
\noindent{\bf Notation}. In this paper, we regularly use the following notation.
\begin{itemize}
\item For any $0\leq \tau < T$,
\begin{equation*}
\Omega_{\tau,T} = \Omega\times(\tau,T), \quad \norm{\cdot}_{L^p(\Omega\times(\tau,T))} = \norm{\cdot}_{L^p(\Omega_{\tau,T})}.
\end{equation*}
When $\tau = 0$, we write $\Omega_T$ instead of $\Omega_{0,T}$ for simplicity.
\item For any function $f: \Omega \to \mathbb R$ we denote the spatial average of $f$ by
\begin{equation*}
\overline{f} = \frac{1}{|\Omega|}\int_{\Omega}f(x)dx.
\end{equation*}
\item $C_T$ {\it always} denotes a general constant which depends {\it at most algebraically} on the time horizon $T>0$.
\item We often write $C(\alpha, \beta, \gamma, \ldots)$ for a generic constant that depends on the arguments $\alpha, \beta, \gamma$, etc, but {\it not on} the time horizon $T>0$.
\end{itemize}

\section{Indirect diffusion effect and the entropy method}\label{sec:entropy}
This section is divided into two parts: In subsection \ref{FI} we prove the functional inequality that quantifies the indirect diffusion effect that we have mentioned before, and in subsection \ref{sec:convergence} we show how one can utilise this inequality to obtain convergence to equilibrium to our system \eqref{eq:sys}, under the assumption on the growth in time of relevant norms.
\subsection{The entropic inequalities}\label{FI}
In this subsection we will focus solely on the functional inequality between the entropy (defined in Section \ref{sec:intro})
\begin{equation}\nonumber
H\pa{\bm{u}|\bm{u}_{\infty}}=\sum_{i=1}^4 \int_{\Omega}\left(u_i\log\pa{\frac{u_i}{u_{i,\infty}}} - u_i + u_{i,\infty}\right)dx,
\end{equation}
and its production 
\begin{equation}\label{eq:entropy_production}
D\pa{\bm{u}}=\sum_{i=1}^3 \int_{\Omega} d_i \frac{\abs{\nabla u_i}^2}{u_i}dx+ \int_{\Omega}\pa{u_1u_2-u_3u_4}\log \pa{\frac{u_1u_2}{u_3u_4}}dx.
\end{equation}
The first term in this expression is the known \emph{Fisher information}, which is commonly associated with the entropy method and the Boltzmann entropy, while the second term is connected to the chemical reaction and is always non-negative. This second term will be invaluable to get the ``indirect diffusion'' effect, we have discussed in Section \ref{sec:intro}.

Our main theorem for this section is the following:
\begin{theorem}\label{thm:entropy_method}
	Let $\{u_i\}_{i=1,\ldots, 4}$ be non-negative functions on $\Omega \subset \mathbb R^N$. Denoting by
	\begin{equation}\label{eq:masses}
		\int_{\Omega}(u_{i}(x) + u_{j}(x))dx = M_{ij}=\abs{\Omega}\pa{u_{i,\infty}+u_{j,\infty}} \quad\text{ for } \quad i\in \{1, 2\} \quad \text{and} \quad j\in \{3,4\}.
	\end{equation}
 we find that
	\begin{equation}\label{eq:entropy_method}
		\frac{H(\bm{u}|\bm{u}_\infty)}{D(\bm{u})} \leq K_1\pa{1+\max_{i=1,\ldots, 4}\{\log(\|u_i\|_{L^\infty(\Omega)} + 1)\}}\left(1+\max_{i=1,4}\|u_i\|_{L^q(\Omega)}\right)
	\end{equation}
	where
	\begin{equation*}
		q = \begin{cases}
			\frac N2 &\text{ when } N\geq 3,\\
			1+\gamma \text{ for an abitrary fixed } \gamma > 0 &\text{ when } N = 2,\\
			1 &\text{ when } N = 1,
		\end{cases}
	\end{equation*}
and the constant $K_1$ depends only on the domain $\Omega$, the dimension $N$, the diffusion coefficients $d_1, d_2, d_3$, the initial masses $M_{13}, M_{23}, M_{14}$, and in the case where $N=2$, it also depends on $\gamma>0$. In \eqref{eq:entropy_method}, $D(\bm{u})$ is understood to be $+\infty$ when $\sqrt{u_i}$ is not in $H^1(\Omega)$ for some $i=1,2,3$.
\end{theorem}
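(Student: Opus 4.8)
The plan is to bound $H(\bm u|\bm u_\infty)$ from above by a constant times $D(\bm u)$, modulo the advertised logarithmic and $L^q$ factors, by splitting the entropy into two contributions: the ``diffusive'' species $u_1,u_2,u_3$, which are controlled directly by the Fisher information term $\sum_{i=1}^3 d_i\int_\Omega \abs{\nabla u_i}^2/u_i\,dx$, and the ``non-diffusive'' species $u_4$, which is the heart of the indirect diffusion effect and must be controlled by a combination of the Fisher information and the reaction term $\int_\Omega (u_1u_2-u_3u_4)\log(u_1u_2/u_3u_4)\,dx$.

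First I would recall the standard entropy estimates available in the full-diffusion case. For each $i$, the relative entropy density satisfies a pointwise bound $u_i\log(u_i/u_{i,\infty}) - u_i + u_{i,\infty} \le C(u_{i,\infty}, \|u_i\|_{L^\infty})\, |\sqrt{u_i}-\sqrt{u_{i,\infty}}|^2$, and more usefully, by a logarithmic Sobolev / Poincaré-Wirtinger argument (cf. the references \cite{FT17,DFT17}) one gets, for $i=1,2,3$,
\begin{equation*}
\int_\Omega \pa{u_i\log\pa{\tfrac{u_i}{\overline{u_i}}} - u_i + \overline{u_i}}dx \le C_\Omega\pa{1+\log(\|u_i\|_{L^\infty(\Omega)}+1)}\int_\Omega \frac{|\nabla u_i|^2}{u_i}dx,
\end{equation*}
so that these three pieces are absorbed into $D(\bm u)$ up to the logarithmic factor. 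It then remains to handle: (i) the passage from spatial averages $\overline{u_i}$ to the true equilibria $u_{i,\infty}$, which uses the mass conservation relations \eqref{eq:masses} together with the elementary inequality relating $H(\bm u|\bm u_\infty)$ to $\sum_i H(u_i|\overline{u_i})$ plus a finite-dimensional ``reaction'' defect among the averages; and (ii) the contribution of $u_4$.

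The main obstacle — and the genuinely new part — is step (ii): bounding $\int_\Omega (u_4\log(u_4/\overline{u_4}) - u_4 + \overline{u_4})\,dx$, i.e. $\|\sqrt{u_4}-\overline{\sqrt{u_4}}\|_{L^2(\Omega)}^2$ up to constants depending on $\|u_4\|_{L^\infty}$, by $D(\bm u)$ with a prefactor involving only $\|u_1\|_{L^q}$ and $\|u_4\|_{L^q}$. The strategy here is the indirect-diffusion mechanism: write $\sqrt{u_4} - \overline{\sqrt{u_4}}$ and split $\Omega$ into the region where $u_1$ is not too small and its complement. On the set where $u_1 \gtrsim$ (a threshold), the reaction term $\int_\Omega (u_1u_2-u_3u_4)\log(u_1u_2/u_3u_4)\,dx \gtrsim \int_\Omega |\sqrt{u_1u_2}-\sqrt{u_3u_4}|^2 dx$ lets us trade $u_4$ against $u_3$ (which is $L^\infty$-bounded and controlled by its own Fisher information) and against $u_1,u_2$; on the set where $u_1$ is small, one uses that $u_1$ has nontrivial mass $M_{13}$ (hence cannot be small everywhere) together with $\int_\Omega |\nabla\sqrt{u_1}|^2 dx$ to propagate this information — this is exactly where the $L^q$ norm of $u_1$ (and of $u_4$) enters, via a Gagliardo–Nirenberg / Sobolev interpolation in dimension $N$, which is why $q=N/2$ for $N\ge 3$, $q=1+\gamma$ for $N=2$, and $q=1$ for $N=1$. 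I would carry this out by: (a) establishing a lower bound $D(\bm u) \ge c\big(\|\nabla\sqrt{u_1}\|_{L^2}^2 + \|\sqrt{u_1u_2}-\sqrt{u_3u_4}\|_{L^2}^2 + \|\nabla\sqrt{u_3}\|_{L^2}^2\big)$; (b) proving an auxiliary inequality of the form $\|\sqrt{u_4}-\overline{\sqrt{u_4}}\|_{L^2}^2 \le C(1+\max_i\|u_i\|_{L^\infty})(1+\max_{i=1,4}\|u_i\|_{L^q})\big(\|\nabla\sqrt{u_1}\|_{L^2}^2 + \|\sqrt{u_1u_2}-\sqrt{u_3u_4}\|_{L^2}^2 + \|\nabla\sqrt{u_3}\|_{L^2}^2\big)$ by the localization-and-interpolation argument just sketched; and (c) combining (a), (b), the three diffusive entropy estimates, and the finite-dimensional balance of the averages into \eqref{eq:entropy_method}. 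The delicate points to watch are the dependence of all constants on $\|u_i\|_{L^\infty}$ only through $\log(\|u_i\|_{L^\infty}+1)$ (for the diffusive species) versus linearly in $\|u_i\|_{L^q}$ (for the non-diffusive mechanism), and the uniform control of the equilibrium values $u_{i,\infty}$ from above and below in terms of the masses $M_{ij}$ and $|\Omega|$ — both of which are bookkeeping once the structural inequalities in (a) and (b) are in place.
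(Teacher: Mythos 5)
Your overall architecture matches the paper's: lower-bound $D(\bm u)$ in the square-root variables by the Fisher information plus $\norm{\sqrt{u_1u_2}-\sqrt{u_3u_4}}_{L^2}^2$, control the entropies of $u_1,u_2,u_3$ by Poincar\'e--Wirtinger with the logarithmic $L^\infty$ factor coming from a pointwise bound on the entropy density, reduce the $u_4$ contribution to an ``indirect diffusion'' inequality of the form $D(\bm u)\gtrsim (1+\max_{i=1,4}\norm{u_i}_{L^q})^{-1}\norm{\sqrt{u_4}-\overline{\sqrt{u_4}}}_{L^2}^2$, and handle the finite-dimensional defect of the averages separately (the paper quotes estimate (61) of \cite{FLT19} for that last piece). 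The $L^q$ exponents also enter for essentially the reason you give: H\"older pairs $L^N\times L^{2N/(N-2)}$ against the Sobolev-improved Poincar\'e--Wirtinger inequality.

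However, the core of the theorem is precisely your step (b), and there your sketch has a genuine gap. You propose a \emph{spatial} localization into the region where $u_1$ is bounded below and its complement, and on the bad set you argue that ``$u_1$ has nontrivial mass $M_{13}$, hence cannot be small everywhere.'' This is false: the conserved quantity is $M_{13}=\int_\Omega(u_1+u_3)\,dx$, so all of that mass may sit in $u_3$ and $u_1$ may vanish identically; no lower bound on $u_1$ follows from \eqref{eq:masses} alone. The variable whose smallness must be dichotomized is $u_3$ (the partner of $u_4$ in the reaction term), not $u_1$: one can only solve $U_3U_4\approx U_1U_2$ for $U_4$ where $U_3$ is bounded below, and it is exactly when $\overline{U_3}$ is small that mass conservation forces $\overline{U_1}^2\gtrsim M_{13}/|\Omega|$ and $\overline{U_2}^2\gtrsim M_{23}/|\Omega|$, whence $\norm{\overline{U_1}\,\overline{U_2}-\overline{U_3}U_4}_{L^2}^2$ is bounded below by a positive constant and trivially dominates $\norm{U_4-\overline{U_4}}_{L^2}^2\leq M$. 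The paper implements this not by a pointwise splitting of $\Omega$ but by a dichotomy on the global quantities $\norm{U_i-\overline{U_i}}_{L^2}^2$, $i=1,2,3$ (large fluctuation: the Fisher information alone suffices since $\norm{U_4-\overline{U_4}}_{L^2}^2$ is a priori bounded by $M$; small fluctuation: expand $(\overline{U_1}+\delta_1)(\overline{U_2}+\delta_2)-(\overline{U_3}+\delta_3)U_4$ and absorb the error terms, which is where $\norm{u_1}_{L^{N/2}}$ and $\norm{u_4}_{L^{N/2}}$ appear), followed by the sub-dichotomy on $\overline{U_3}$. As written, your localization argument does not close, and the inequality in (b) --- which is the actual content of the theorem --- is asserted rather than proved.
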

The proof of this Theorem is involved and is therefore divided to several lemmas. In what follows we will denote by $U_i=\sqrt{u_i}$, $i=1,\ldots, 4$, and notice that
$$u_i\in L^1\pa{\Omega}\;\Rightarrow\; U_i\in L^2\pa{\Omega},$$
$$\nabla U_i = \frac{\nabla u_i}{2\sqrt{u_i}}\quad\text{and as such}\quad \frac{\abs{\nabla u_i}^2}{u_i}=4\abs{\nabla U_i}^2.$$

\medskip
One ingredient to prove Theorem \ref{thm:entropy_method} is the following Poincar\'e-Wirtinger inequality:
\begin{lemma}(Poincar\'e-Wirtinger-type inequality)\label{PW}
	There exists a constant $0<C_{\Omega,q}<\infty$ depending on the domain $\Omega$ and $q$ such that
	\begin{equation}\label{eq:PW}
		\|\nabla f\|_{L^2(\Omega)} \geq C_{\Omega,q}\|f - \overline{f}\|_{L^q(\Omega)}
	\end{equation}
	for all $f\in H^1(\Omega)$, where
	\begin{equation*}
		q = \begin{cases}
			\frac{2N}{N-2} &\text{ for } N\geq 3,\\
			\in [1,\infty) \text{ arbitrary } &\text{ for } N = 2,\\
			\infty &\text{ for } N = 1.
		\end{cases}
	\end{equation*}
	Naturally, when $q = \infty$, the constant $C_{\Omega,q}$ depends only on $\Omega$.
\end{lemma}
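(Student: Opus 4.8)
\textbf{Proof proposal for Lemma \ref{PW}.}
The plan is to combine the classical (mean-zero) Poincar\'e--Wirtinger inequality in $L^2$ with the Sobolev embedding $H^1(\Omega)\hookrightarrow L^q(\Omega)$, exploiting the fact that $\Omega$ is a bounded domain with smooth boundary. Set $g = f - \overline f$, so that $\overline g = 0$ and $\nabla g = \nabla f$. First I would invoke the standard Poincar\'e--Wirtinger inequality: there is a constant $C_P = C_P(\Omega)$ with
\begin{equation*}
\|g\|_{L^2(\Omega)} = \|f-\overline f\|_{L^2(\Omega)} \leq C_P \|\nabla f\|_{L^2(\Omega)}.
\end{equation*}
If one wishes to keep the argument self-contained, this inequality follows from a routine compactness--contradiction argument: were it false, there would exist $f_n$ with $\overline{f_n}=0$, $\|f_n\|_{L^2(\Omega)}=1$, and $\|\nabla f_n\|_{L^2(\Omega)}\to 0$; then $\{f_n\}$ is bounded in $H^1(\Omega)$, and by the Rellich--Kondrachov theorem (valid since $\partial\Omega$ is smooth) a subsequence converges strongly in $L^2(\Omega)$ to some $f$ with $\|f\|_{L^2(\Omega)}=1$, $\nabla f = 0$ and $\overline f = 0$, forcing $f\equiv 0$, a contradiction.

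Next I would apply the Sobolev embedding theorem on the bounded smooth domain $\Omega$: for the stated exponents $q$ (namely $q = \tfrac{2N}{N-2}$ when $N\geq 3$, any finite $q$ when $N=2$, and $q=\infty$ when $N=1$, in which case $H^1(\Omega)\hookrightarrow C^{0,1/2}(\overline\Omega)\hookrightarrow L^\infty(\Omega)$), there is a constant $C_S = C_S(\Omega,q)$ with
\begin{equation*}
\|g\|_{L^q(\Omega)} \leq C_S\pa{\|\nabla g\|_{L^2(\Omega)} + \|g\|_{L^2(\Omega)}}.
\end{equation*}
Substituting $\nabla g = \nabla f$ and the Poincar\'e--Wirtinger bound on $\|g\|_{L^2(\Omega)}$ then yields
\begin{equation*}
\|f - \overline f\|_{L^q(\Omega)} \leq C_S\pa{1 + C_P}\|\nabla f\|_{L^2(\Omega)},
\end{equation*}
which is precisely \eqref{eq:PW} with $C_{\Omega,q} = \pa{C_S(1+C_P)}^{-1}$. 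When $N=1$ the Sobolev constant depends only on $\Omega$, so $C_{\Omega,q}$ does as well, as stated.

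I do not anticipate a genuine obstacle here, as this is a classical result; the only point requiring care is ensuring that the regularity of $\partial\Omega$ (smoothness, as assumed throughout) is enough to guarantee both the Sobolev embedding and the compactness of $H^1(\Omega)\hookrightarrow L^2(\Omega)$ used in the Poincar\'e--Wirtinger step, and to track that the resulting constant depends only on $\Omega$ and $q$ (and is finite for every admissible $q$, including all finite $q$ when $N=2$). Alternatively, one could simply cite a standard reference for both ingredients and record the constant.
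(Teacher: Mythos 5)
Your proposal is correct and follows essentially the same route as the paper: the mean-zero Poincar\'e--Wirtinger inequality in $L^2(\Omega)$ combined with the Sobolev embedding $H^1(\Omega)\hookrightarrow L^q(\Omega)$ for the stated exponents (the paper handles the critical case $N=2$ by passing through $W^{1,\eta}(\Omega)$ with $\eta=\tfrac{2q}{q+2}<2$, which is just one way of justifying the embedding you invoke directly). The only cosmetic difference is that you supply a compactness--contradiction proof of the Poincar\'e--Wirtinger step, whereas the paper simply cites it.
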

\begin{proof}
If $N\geq 3$ inequality \eqref{eq:PW} is the result of the Sobolev embedding theorem, $H^1\pa{\Omega}\subset L^{\frac{2N}{N-2}}\pa{\Omega}$, and the Poincar\'e-Wirtinger inequality
$$\norm{f - \overline{f}}_{L^2\pa{\Omega}} \leq \mathcal{C}_{\Omega,2}\norm{\nabla f}_{L^2\pa{\Omega}}$$
(see, for instance, \cite{Evans}). Indeed, one find that
$$\|f - \overline{f}\|_{L^{\frac{2N}{N-2}}(\Omega)}\leq S_{\Omega,N}\|f - \overline{f}\|_{H^1(\Omega)}$$
$$= S_{\Omega,N}\pa{\|f - \overline{f}\|_{L^2(\Omega)}+\|\nabla f\|_{L^2(\Omega)}}\leq  S_{\Omega,N}\pa{1+\mathcal{C}_{\Omega,2}}\|\nabla f\|_{L^2(\Omega)}.$$
The case $N=2$ (and $p=2$) is a bit more delicate, as it is the critical case. However, as $\Omega$ is bounded we have that $H^1\pa{\Omega}\subset W^{1,\eta}\pa{\Omega}$ for any $1\leq \eta<2$, and as such the choice $\eta=\frac{2q}{q+2}$, together with the Sobolev embedding and the Poincar\'e inequality, yield the desired result.\\ 
Lastly, the case $N=1$ follows from the Sobolev embedding $H^1\pa{\Omega}\subset L^\infty\pa{\Omega}$, and the Poincar\'e inequality again.
\end{proof}

\begin{lemma}\label{lem:first_D_estimation}
Let $\br{u_i}_{i=1,\dots,4}$ be non-negative functions on $\Omega$. Then 
\begin{equation}\label{eq:first_D_estimation}
D\pa{\bm{u}} \geq  \wt{D}(\bm{U}):= 4\pa{\sum_{i=1}^3 d_i\norm{\nabla U_i}_{L^2\pa{\Omega}}^2+\norm{U_1U_2-U_3U_4}_{L^2\pa{\Omega}}^2}.
\end{equation}
Note that $\wt{D}(\bm U)$, and consequently $D(\bm u)$, is understood to be $+\infty$ when $U_i$ is not in $H^1(\Omega)$ for some $i=1,2,3$.
\end{lemma}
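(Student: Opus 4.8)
The plan is to treat the diffusive part and the reaction part of $D(\bm u)$ separately: the former reduces to an identity, and the latter to a scalar inequality applied pointwise and then integrated.

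For the diffusion, recall from the excerpt that $\nabla U_i = \nabla u_i/(2\sqrt{u_i})$, so that $\abs{\nabla u_i}^2/u_i = 4\abs{\nabla U_i}^2$ on $\{u_i>0\}$ (and $\nabla u_i = 0$ a.e.\ on $\{u_i=0\}$ whenever $U_i\in H^1(\Omega)$), whence $\sum_{i=1}^3 d_i\int_\Omega \abs{\nabla u_i}^2/u_i\,dx = 4\sum_{i=1}^3 d_i\norm{\nabla U_i}_{L^2(\Omega)}^2$. This is an equality, both sides being $+\infty$ precisely when $U_i\notin H^1(\Omega)$ for some $i\in\{1,2,3\}$, in which case $\wt D(\bm U)=+\infty$ and the lemma is trivial; so one may assume $U_1,U_2,U_3\in H^1(\Omega)$. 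Since $u_1u_2=(U_1U_2)^2$ and $u_3u_4=(U_3U_4)^2$, the only remaining point is the pointwise bound
\[
(u_1u_2 - u_3u_4)\log\frac{u_1u_2}{u_3u_4} \;\geq\; 4\,\pa{U_1U_2 - U_3U_4}^2,
\]
which upon integration over $\Omega$ supplies the reaction part of \eqref{eq:first_D_estimation}. Both integrands here are non-negative — the left one by the scalar inequality itself, with the usual conventions when $u_3u_4=0$ — so the reaction integral in \eqref{eq:entropy_production} is a well-defined element of $[0,+\infty]$ and the estimate is meaningful even if $U_1U_2-U_3U_4\notin L^2(\Omega)$ (then $\wt D(\bm U)=+\infty$ and the left side is $+\infty$ too).

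Everything therefore comes down to the elementary inequality $(a^2-b^2)\log(a^2/b^2)\geq 4(a-b)^2$ for all $a,b\geq 0$. For $a=b$ both sides vanish and for $a=0$ or $b=0$ it holds by the conventions above, so take $a,b>0$ with $a\neq b$, put $t=a/b>0$, and divide by $b^2$; the claim becomes $(t^2-1)\log t\geq 2(t-1)^2$, i.e.\ $(t-1)h(t)\geq 0$ with $h(t):=(t+1)\log t-2(t-1)$. I would then compute $h'(t)=\log t+1/t-1$ and $h''(t)=(t-1)/t^2$, so $h'$ is minimised at $t=1$ with $h'(1)=0$; hence $h'\geq 0$, $h$ is nondecreasing, and since $h(1)=0$ the factors $t-1$ and $h(t)$ always share their sign, giving $(t-1)h(t)\geq 0$. (Equivalently, this is the bound $\tfrac{a^2-b^2}{\log a^2-\log b^2}\leq\bigl(\tfrac{a+b}{2}\bigr)^2$, which follows from the logarithmic mean being dominated by the arithmetic mean.) Combining this integrated inequality with the diffusive identity yields \eqref{eq:first_D_estimation}.

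There is no genuine obstacle: the argument is a routine reduction to a one-variable calculus inequality. The only step requiring a little care is the bookkeeping of the degenerate cases — $u_i$ vanishing on part of $\Omega$, $\sqrt{u_i}\notin H^1(\Omega)$, or $U_1U_2-U_3U_4\notin L^2(\Omega)$ — so that every quantity involved is read as an element of $[0,+\infty]$ and the asserted inequality is automatically true whenever its right-hand side is infinite, which is exactly the convention recorded in the statement.
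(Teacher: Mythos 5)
Your proof is correct and follows the same route as the paper: split $D(\bm u)$ into the Fisher-information part (which equals $4\sum_{i=1}^3 d_i\norm{\nabla U_i}_{L^2(\Omega)}^2$) and the reaction part, then apply the pointwise inequality $(x-y)\log(x/y)\geq 4(\sqrt{x}-\sqrt{y})^2$ with $x=u_1u_2$, $y=u_3u_4$ and integrate. The only difference is that you also verify the elementary scalar inequality by calculus and track the degenerate cases explicitly, whereas the paper simply invokes it.
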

\begin{proof}
Using the inequality $\pa{x-y}\log\pa{\frac{x}{y}}\geq 4 \pa{\sqrt{x}-\sqrt{y}}^2 $ we see that
$$\int_{\Omega}\pa{u_1(x)u_2(x)-u_3(x)u_4(x)}\log \pa{\frac{u_1(x)u_2(x)}{u_3(x)u_4(x)}}dx$$
$$ \geq 4\int_{\Omega}\pa{\sqrt{u_1(x)u_2(x)}-\sqrt{u_3(x)u_4(x)}}^2dx=4\norm{U_1U_2-U_3U_4}_{L^2\pa{\Omega}}^2. $$
The connection between the $L^2$ norm of $\nabla U_i$ and the Fisher information of $u_i$ completes the proof.
\end{proof}
The next series of Lemmas express the ``indirect diffusion'' phenomenon we keep mentioning.
\begin{lemma}[$N\geq 3$]\label{lem:second_D_estimation_N>2}
Let $\br{u_i}_{i=1,\dots,4}$ be bounded non-negative functions on a bounded domain $\Omega\subset \R^N$, with $N\geq 3$. Then there exists an explicit $K_2>0$, depending only on $\Omega$, $\br{d_i}_{i=1,2,3}$, $M_{13},M_{23},M_{14}$, such that
\begin{equation}\label{eq:second_D_estimation_N>2}
\begin{gathered}
\wt{D}\pa{\bm{U}}
\geq \frac{K_2}{1 +\max_{i=1,4}\pa{\norm{u_i}_{L^{\frac N2}\pa{\Omega}}}}\norm{U_4-\overline{U_4}}^2_{L^2\pa{\Omega}}
\end{gathered}
\end{equation}
where $\wt D(\bm{U})$ is defined in \eqref{eq:first_D_estimation}. Note that $\wt{D}(\bm{U})$ is understood to be $+\infty$ if $U_i$ is not $H^1(\Omega)$ for some $i=1,2,3$.
\end{lemma}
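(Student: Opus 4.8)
The plan is to estimate the oscillation $\norm{U_4-\overline{U_4}}_{L^2(\Omega)}$, where $U_i:=\sqrt{u_i}$, against the dissipation $\wt D(\bm U)$ of \eqref{eq:first_D_estimation}, by exploiting the purely algebraic relation $U_3U_4=U_1U_2-R$, where $R:=U_1U_2-U_3U_4$. I may assume $U_1,U_2,U_3\in H^1(\Omega)$, since otherwise $\wt D(\bm U)=+\infty$; I set $v_i:=U_i-\overline{U_i}$ for $i=1,2,3$, and record $\norm{\nabla U_i}_{L^2(\Omega)}^2\le\tfrac1{4d_i}\wt D(\bm U)$ and $\norm{R}_{L^2(\Omega)}^2\le\tfrac14\wt D(\bm U)$.

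\emph{Core identity.} Multiplying $U_4-\overline{U_4}$ by the constant $\overline{U_3}$, writing $\overline{U_3}\,U_4=U_3U_4-v_3U_4=\overline{U_1}\,\overline{U_2}+P-R-v_3U_4$ with $P:=U_1U_2-\overline{U_1}\,\overline{U_2}=U_1v_2+\overline{U_2}\,v_1$, and subtracting the spatial mean (which kills the constant $\overline{U_1}\,\overline{U_2}$), I obtain
\[
\overline{U_3}\,\pa{U_4-\overline{U_4}}=\pa{P-\overline P}-\pa{R-\overline R}-\pa{v_3U_4-\overline{v_3U_4}},
\]
and hence, using $\norm{f-\overline f}_{L^2(\Omega)}\le\norm{f}_{L^2(\Omega)}$,
\[
\overline{U_3}\,\norm{U_4-\overline{U_4}}_{L^2(\Omega)}\le\norm{U_1v_2}_{L^2(\Omega)}+\overline{U_2}\,\norm{v_1}_{L^2(\Omega)}+\norm{R}_{L^2(\Omega)}+\norm{v_3U_4}_{L^2(\Omega)}.
\]
The splitting of $P$ is chosen precisely so that the factors \emph{not} controlled by $\wt D(\bm U)$ appear only as $U_1$ or $U_4$ — which I handle through $\norm{U_i}_{L^N(\Omega)}=\norm{u_i}_{L^{N/2}(\Omega)}^{1/2}$, exactly the quantity in \eqref{eq:second_D_estimation_N>2} — or as the averages $\overline{U_2},\overline{U_3}$, which are bounded by $|\Omega|^{-1/2}\norm{u_i}_{L^1(\Omega)}^{1/2}\le|\Omega|^{-1/2}M_{23}^{1/2}$, resp.\ $|\Omega|^{-1/2}M_{13}^{1/2}$. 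Since $\tfrac1N+\tfrac{N-2}{2N}=\tfrac12$, Hölder together with the Poincaré--Wirtinger inequality of Lemma~\ref{PW} (applied to the mean-zero functions $v_2,v_3$ with $q=\tfrac{2N}{N-2}$) gives $\norm{U_1v_2}_{L^2(\Omega)}\le\norm{U_1}_{L^N(\Omega)}\norm{v_2}_{L^{2N/(N-2)}(\Omega)}\le C\norm{u_1}_{L^{N/2}(\Omega)}^{1/2}\wt D(\bm U)^{1/2}$ and $\norm{v_3U_4}_{L^2(\Omega)}\le C\norm{u_4}_{L^{N/2}(\Omega)}^{1/2}\wt D(\bm U)^{1/2}$, while the ordinary Poincaré inequality gives $\norm{v_1}_{L^2(\Omega)}\le C\wt D(\bm U)^{1/2}$. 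Inserting these, I arrive at
\[
\overline{U_3}\,\norm{U_4-\overline{U_4}}_{L^2(\Omega)}\le C_*\pa{1+\max_{i=1,4}\norm{u_i}_{L^{N/2}(\Omega)}}^{1/2}\wt D(\bm U)^{1/2},
\]
with $C_*$ explicit and depending only on $\Omega,d_1,d_2,d_3,M_{13},M_{23},M_{14}$.

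\emph{The lower bound on $\overline{U_3}$.} The inequality just obtained is empty unless $\overline{U_3}$ stays away from $0$, which in general can fail; I expect resolving this to be the heart of the proof. I would use a dichotomy against an explicit threshold $\wt D_0>0$. If $\wt D(\bm U)>\wt D_0$, then $\norm{U_4-\overline{U_4}}_{L^2(\Omega)}^2\le\norm{U_4}_{L^2(\Omega)}^2=\int_\Omega u_4\le M_{14}$ already yields \eqref{eq:second_D_estimation_N>2} with $K_2\le\wt D_0/M_{14}$. If $\wt D(\bm U)\le\wt D_0$, I claim $\overline{U_3}\ge c_*>0$ with $c_*$ explicit in the masses: using $\int_\Omega u_i=|\Omega|\,\overline{U_i}^2+\norm{v_i}_{L^2(\Omega)}^2$ and $\norm{v_i}_{L^2(\Omega)}^2\le C\wt D(\bm U)$ for $i=1,2,3$, the conservation laws $\int_\Omega(u_1+u_3)=M_{13}$, $\int_\Omega(u_2+u_3)=M_{23}$ give $\overline{U_1}^2\ge M_{13}/|\Omega|-\overline{U_3}^2-C\wt D(\bm U)/|\Omega|$ and similarly for $\overline{U_2}^2$, so that either $\overline{U_3}^2>\tfrac14\min(M_{13},M_{23})/|\Omega|$ (done), or, provided $\wt D_0$ is small, $\overline{U_1}\,\overline{U_2}\ge\tfrac12\sqrt{M_{13}M_{23}}/|\Omega|$. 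In the latter case I integrate $U_3U_4=U_1U_2-R$, bounding $\int_\Omega U_1U_2=|\Omega|\overline{U_1}\,\overline{U_2}+\int_\Omega v_1v_2\ge|\Omega|\overline{U_1}\,\overline{U_2}-C\wt D(\bm U)$ from below and $\int_\Omega U_3U_4=\overline{U_3}\int_\Omega U_4+\int_\Omega v_3U_4\le\overline{U_3}\,|\Omega|^{1/2}M_{14}^{1/2}+C\wt D(\bm U)^{1/2}$ from above, to reach
\[
\overline{U_3}\,|\Omega|^{1/2}M_{14}^{1/2}\ge\tfrac12\sqrt{M_{13}M_{23}}-C\pa{\wt D(\bm U)+\wt D(\bm U)^{1/2}},
\]
whose right-hand side is $\ge\tfrac14\sqrt{M_{13}M_{23}}$ once $\wt D_0$ is chosen small enough (a matter of solving a quadratic in $\wt D(\bm U)^{1/2}$), giving $\overline{U_3}\ge\tfrac14\sqrt{M_{13}M_{23}}/\sqrt{|\Omega|M_{14}}$. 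Combining the two regimes then yields \eqref{eq:second_D_estimation_N>2} with an explicit $K_2=\min\{c_*^2/C_*^2,\ \wt D_0/M_{14}\}$. The steps before this last one are essentially bookkeeping with Hölder's and Poincaré's inequalities; the delicate point — and the place where the mass conservation and the reaction term must be used together — is precisely the quantitative assertion that smallness of the dissipation forces the averaged amount of the intermediary diffusive species $u_3$ to stay bounded below.
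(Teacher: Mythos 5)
Your argument is correct and reaches \eqref{eq:second_D_estimation_N>2} with a constant of the required form, but it is organised quite differently from the paper's proof. The paper works with the quadratic expression directly: it expands $\norm{U_1U_2-U_3U_4}_{L^2}^2$ around the averages using $(a-b)^2\geq \tfrac{a^2}{2}-b^2$ (see \eqref{eq:crucial_estimate_D_part_I}), absorbs the resulting error terms into the $\norm{\delta_i}_{L^{2N/(N-2)}}^2$ contributions of the Fisher information by choosing a weight $\eta$ as in \eqref{eta} — this absorption is precisely where the factor $\pa{1+\max_{i=1,4}\norm{u_i}_{L^{N/2}}}^{-1}$ enters — and then runs a case analysis on the size of $\max_i\norm{\delta_i}_{L^2}^2$ and of $\overline{U_3}$, with the explicit threshold \eqref{epsilon} needed only in the sub-case where $\overline{U_3}$ is small. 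You instead linearise: the identity $\overline{U_3}\pa{U_4-\overline{U_4}}=(P-\overline P)-(R-\overline R)-(v_3U_4-\overline{v_3U_4})$ lets you use the plain triangle inequality rather than the reverse one, so the $L^{N/2}$-dependence appears transparently through the single Hölder pairing $L^N\times L^{2N/(N-2)}\to L^2$ and no absorption step is needed; the price is that you must separately bound $\overline{U_3}$ from below, which you do by a dichotomy on the total dissipation combined with the mass constraints \eqref{eq:masses} and an integrated form of $U_3U_4=U_1U_2-R$. Both proofs rest on the same three ingredients (Lemma \ref{PW} with the Sobolev exponent, the Hölder pairing above, and the conservation laws to prevent $\overline{U_1},\overline{U_2}$ from degenerating when $\overline{U_3}$ is small), and both yield explicit constants; your version is somewhat more streamlined, while the paper's version isolates the quantity $\norm{\overline{U_1}\,\overline{U_2}-\overline{U_3}U_4}_{L^2}^2$ in \eqref{third_estimate}, an intermediate estimate it reuses in the lower-dimensional variants. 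The only caveats, shared by the paper's own proof, are the implicit assumptions that the masses are positive (your final division by $M_{14}^{1/2}$, like the paper's division by $M$, is vacuous otherwise) and that the relations \eqref{eq:masses} are part of the hypotheses.
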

\begin{proof}
By applying Poincar\'e-Wirtinger inequality from Lemma \ref{PW}, we have
\begin{equation}\label{first_estimate}
\begin{aligned}
\wt D(\bm{U})&\geq 4\sum_{i=1}^3 d_i\norm{\nabla U_i}_{L^2\pa{\Omega}}^2+4\norm{U_1U_2-U_3U_4}_{L^2(\Omega)}^2\\
&\geq 4C_{\Omega}^2\sum_{i=1}^3d_i\|U_i - \overline{U_i}\|_{L^{\frac{2N}{N-2}}(\Omega)}^2 + 4\|U_1U_2 - U_3U_4\|_{L^2(\Omega)}^2\\
&\geq 4\min_{i=1,2,3}\{C_\Omega^2d_i,1\}\widehat{D}(\bm{U})
\end{aligned}
\end{equation}
where
\begin{equation*}
	\widehat{D}(\bm{U}) = \sum_{i=1}^3\|U_i - \overline{U_i}\|_{L^{\frac{2N}{N-2}}(\Omega)}^2 + \|U_1U_2 - U_3U_4\|_{L^2(\Omega)}^2.
\end{equation*}
We consider the zero average functions
$$\delta_i(x)=U_i(x)-\overline{U_i},\quad\quad i=1,\dots,4,$$
with whom we can rewrite
$$
\widehat{D}(\bm{U}) = \sum_{i=1}^3\|\delta_i\|_{L^{\frac{2N}{N-2}}(\Omega)}^2 + \|U_1U_2 - U_3U_4\|_{L^2(\Omega)}^2.
$$
From this point onwards we estimate the above expressions with respect to the ``smallness'' of $\delta_i$, and analyse possible cases. We start with a few simple estimates on $\br{\delta_i}_{i=1,\dots,4}$. From \eqref{eq:masses}, and the non-negative of all $\br{u_i}_{i=1,\dots,4}$, we see that for $M=\max\br{M_{1,3},M_{2,3},M_{1,4}}$
\begin{equation*}
\|U_i\|_{L^2(\Omega)}^2 = \int_{\Omega}u_idx \leq M \quad \text{ for all } \quad i=1,\ldots, 4,
\end{equation*}
Since $$\overline{U^2_i}=\frac{\|U_i\|_{L^2(\Omega)}^2}{\abs{\Omega}}$$ we see that for all $i=1,\ldots, 4$,
$$\overline{U^2_i}\leq \frac{M}{\abs{\Omega}},$$
and 
$$\norm{\delta_i}^2_{L^2\pa{\Omega}}=|\Omega|(\overline{U_i^2}-\overline{U_i}^2) \leq |\Omega|\overline{U_i^2} \leq M,$$
Similarly, as
$$|\Omega|\overline{U_i}^2=|\Omega|\overline{U_i^2}-\norm{\delta_i}^2_{L^2\pa{\Omega}} \leq M$$
we find that
$$\overline{U_i}^2 \leq  \frac{M}{\abs{\Omega}}.$$
%
Next, for a given $\epsilon>0$, to be chosen shortly (see \eqref{epsilon}) we consider the following two cases:

\medskip
\noindent\textbf{Case I: some $\delta_i$ is large.} If 
$$\max_{i=1,2,3}\norm{\delta_i}^2_{L^2\pa{\Omega}} \geq \epsilon,$$
then
\begin{equation}\label{second_estimate}
\begin{aligned}
\widehat{D}\pa{\bm{U}} &\geq \max_{i=1,2,3}\norm{\delta_i}^2_{L^{\frac{2N}{N-2}}\pa{\Omega}} \geq |\Omega|^{-2/N}\max_{i=1,2,3}\|\delta_i\|_{L^2(\Omega)}^2\\
&\geq |\Omega|^{-2/N}\epsilon \geq \frac{\epsilon \abs{\Omega}^{-2/N}}{M} \norm{\delta_4}^2_{L^2\pa{\Omega}}=\frac{ \epsilon \abs{\Omega}^{-2/N}}{M} \norm{U_4-\overline{U_4}}^2_{L^2\pa{\Omega}}. 
\end{aligned}
\end{equation}

\medskip
\noindent\textbf{Case II: all $\delta_i$ are small.} If  
$$\max_{i=1,2,3}\norm{\delta_i}^2_{L^2\pa{\Omega}} \leq \epsilon.$$
then
\begin{equation}\label{eq:crucial_estimate_D_part_I}
\begin{aligned}
\norm{U_1U_2-U_3U_4}_{L^2\pa{\Omega}}^2&=\norm{\pa{\overline{U_1}+\delta_1}\pa{\overline{U_2}+\delta_2}-\pa{\overline{U_3}+\delta_3}U_4}_{L^2\pa{\Omega}}^2\\
 &\geq \frac{\norm{\overline{U_1}\;\overline{U_2}-\overline{U_3}U_4}_{L^2\pa{\Omega}}^2}{2}-\norm{\delta_1\overline{U_2}+\delta_2\overline{U_1}+\delta_1\delta_2 -\delta_3 U_4}^2_{L^2\pa{\Omega}}
\end{aligned}
\end{equation}
where we have used the inequality $\pa{a-b}^2 \geq \frac{a^2}{2}-b^2$. Using the facts that $\pa{a+b+c+d}^2 \leq 4\pa{a^2+b^2+c^2+d^2}$ and $\overline{U_i}^2\leq M/\abs{\Omega}$, we find that
\begin{equation}\label{eq:crucial_estimate_D_part_II}
\begin{aligned}
&\norm{\delta_1\overline{U_2}+\delta_2\overline{U_1}+\delta_1\delta_2 -\delta_3 U_4}^2_{L^2\pa{\Omega}}\\
&\leq \frac{4M}{\abs{\Omega}} \pa{\norm{\delta_1}^2_{L^2\pa{\Omega}}+\norm{\delta_2}^2_{L^2\pa{\Omega}}}
+4\int_{\Omega}|\delta_1\delta_2|^2dx
+4\norm{U_4^2\delta_3^2}_{L^1\pa{\Omega}}\\
&\leq 4M\abs{\Omega}^{2/N-1} \pa{\norm{\delta_1}^2_{L^{\frac{2N}{N-2}}\pa{\Omega}}+\norm{\delta_2}^2_{L^{\frac{2N}{N-2}}\pa{\Omega}}}
+4\int_{\Omega}|\delta_1(x)\delta_2(x)|^2dx
+4\norm{U_4^2\delta_3^2}_{L^1\pa{\Omega}}.
\end{aligned}
\end{equation}
Using H\"older's inequality we see that
\begin{equation}\label{change_1}
\begin{aligned}
\int_{\Omega}|\delta_1(x)\delta_2(x)|^2dx &= \int_{\Omega}|U_1(x) - \overline{U_1}|^2|\delta_2(x)|^2dx \leq\|U_1 - \overline{U_1}\|_{L^{N}(\Omega)}^2 \|\delta_2\|_{L^{\frac{2N}{N-2}}(\Omega)}^2\\
&\leq 2\left(\|U_1\|_{L^N(\Omega)}^2 + |\Omega|^{\frac{2}{N}}\overline{U_1}^2\right) \|\delta_2\|_{L^{\frac{2N}{N-2}}(\Omega)}^2\\
&\leq 2\left(\|u_1\|_{L^{\frac N2}(\Omega)} + M\abs{\Omega}^{\frac{2}{N}-1}\right)\|\delta_2\|_{L^{\frac{2N}{N-2}}(\Omega)}^2.
\end{aligned}
\end{equation}
Similarly,
\begin{equation}
\label{change_2}
\norm{U_4^2\delta_3^2}_{L^1\pa{\Omega}} \leq \norm{U_4}^2_{L^{N}\pa{\Omega}} \norm{\delta_3}^2_{L^{\frac{2N}{N-2}}\pa{\Omega}}  =  \norm{u_4}_{L^{\frac{N}{2}}\pa{\Omega}}\norm{\delta_3}^2_{L^{\frac{2N}{N-2}}\pa{\Omega}}.
\end{equation}
Combining \eqref{eq:crucial_estimate_D_part_I}, \eqref{eq:crucial_estimate_D_part_II}, \eqref{change_1} and \eqref{change_2}, we see that
\begin{equation}\label{eq:crucial_estimate_D_part_III}
\begin{aligned}
&\norm{U_1U_2-U_3U_4}_{L^2\pa{\Omega}}^2\geq \frac{\norm{\overline{U_1}\;\overline{U_2}-\overline{U_3}U_4}_{L^2\pa{\Omega}}^2}{2}-4M\abs{\Omega}^{2/N-1} \norm{\delta_1}^2_{L^{\frac{2N}{N-2}}\pa{\Omega}}\\
&-4\pa{3M\abs{\Omega}^{2/N-1} +2\norm{u_1}_{L^{\frac{N}{2}}\pa{\Omega}}}\norm{\delta_2}^2_{L^{\frac{2N}{N-2}}\pa{\Omega}}-4\norm{u_4}_{L^{\frac{N}{2}}\pa{\Omega}}\norm{\delta_3}^2_{L^{\frac{2N}{N-2}}\pa{\Omega}}
\end{aligned}
\end{equation}
With this at hand, we see that for any $\eta\in (0,1]$ we have that
\begin{equation*}
\begin{aligned}
\widehat{D}\pa{\bm{U}}&\geq \sum_{i=1}^3 \norm{\delta_i}_{L^{\frac{2N}{N-2}}\pa{\Omega}}^2+\eta\norm{U_1U_2-U_3U_4}_{L^2\pa{\Omega}}^2\\
&\geq \left(1-4\eta M |\Omega|^{\frac 2N - 1}\right)\|\delta_1\|_{L^{\frac{2N}{N-2}}(\Omega)}^2\\
&\quad + \left(1 - 4\eta\left(3M|\Omega|^{\frac 2N - 1}+2\|u_1\|_{L^{\frac N2}(\Omega)} \right)\right)\|\delta_2\|_{L^{\frac{2N}{N-2}}(\Omega)}^2\\
&\quad + \left(1-4\eta\|u_4\|_{L^{\frac N2}(\Omega)} \right)\|\delta_3\|_{L^{\frac{2N}{N-2}}(\Omega)}^2 + \frac{\eta}{2}\|\overline{U_1}\,\overline{U_2} - \overline{U_3}U_4\|_{L^2(\Omega)}^2.
\end{aligned}
\end{equation*}
Thus, choosing 
\begin{equation}\label{eta}
\eta=\frac{1}{4\left(3M|\Omega|^{\frac 2N -1} + 2\max_{i=1,4}\|u_i\|_{L^{\frac N2}(\Omega)} \right)+1}<1
\end{equation}
yields
\begin{equation}\label{third_estimate}
\widehat{D}\pa{\bm{U}}\geq  \frac{\eta}{2}\norm{\overline{U_1}\;\overline{U_2}-\overline{U_3}U_4}_{L^2\pa{\Omega}}^2.
\end{equation}
We now estimate the right hand side of \eqref{third_estimate}. This involves two sub-cases, regarding the size of  $\overline{U_3}$.

\medskip
\textbf{Sub-case A: $\overline{U_3}$ is large}. Assume that $\overline{U_3} \geq \sqrt{\epsilon}$. Then, motivated by the connection $u_{4,\infty} = \frac{u_{1,\infty}u_{2,\infty}}{u_{3,\infty}}$, we write 
$$U_4(x)=\frac{\overline{U_1}\;\overline{U_2}}{\overline{U_3}}\pa{1+\xi(x)} \quad \text{ for } \quad x\in\Omega.$$
As such 
$$\norm{\overline{U_1}\;\overline{U_2}-\overline{U_3}U_4}_{L^2\pa{\Omega}}^2=\pa{\overline{U_1}\;\overline{U_2}}^2 \norm{\xi}^2_{L^2\pa{\Omega}}=\pa{\overline{U_1}\;\overline{U_2}}^2  \overline{\xi^2}|\Omega|.$$
On the other hand
$$\norm{U_4-\overline{U_4}}^2_{L^2\pa{\Omega}}= \frac{\pa{\overline{U_1}\;\overline{U_2}}^2}{\overline{U_3}^2}\norm{\xi-\overline{\xi}}^2_{L^2\pa{\Omega}}=\frac{\pa{\overline{U_1}\;\overline{U_2}}^2}{\overline{U_3}^2} \pa{\overline{\xi^2}-\overline{\xi}^2}|\Omega|.$$
Thus 
\begin{equation}\label{fourth_estimate}
\norm{\overline{U_1}\;\overline{U_2}-\overline{U_3}U_4}_{L^2\pa{\Omega}}^2 \geq \epsilon\frac{\pa{\overline{U_1}\;\overline{U_2}}^2}{\overline{U_3}^2} |\Omega|  \overline{\xi^2} \geq \epsilon\norm{U_4-\overline{U_4}}^2_{L^2\pa{\Omega}}.
\end{equation}

\medskip
\textbf{Sub-case B: $\overline{U_3}$ is small.} Assume now that $\overline{U_3} \leq \sqrt{\epsilon}$. In this case since
$$\norm{\delta_3}^2_{L^2\pa{\Omega}}= |\Omega|(\overline{U_3^2}-\overline{U_3}^2),$$
and we are in Case II, we see that $\overline{U_3^2} \leq \overline{U_3}^2 + |\Omega|^{-1}\|\delta_3\|_{L^2(\Omega)}^2 \leq \epsilon(1+|\Omega|^{-1})$. Thus
$$\overline{U_1}^2 = \overline{U_1^2} -|\Omega|^{-1} \norm{\delta_1}^2_{L^2\pa{\Omega}}=\overline{U_1^2}+\overline{U_3^2} -\overline{U_3^2}- |\Omega|^{-1}\norm{\delta_1}^2_{L^2\pa{\Omega}} \geq \frac{M_{13}}{|\Omega|} - \epsilon(1+2|\Omega|^{-1}). $$
Similarly, $\overline{U_2}^2\geq \frac{M_{23}}{|\Omega|} - \epsilon(1+2|\Omega|^{-1})$. With that, and using again the facts that $\pa{a-b}^2 \geq \frac{a^2}{2}-b^2$ and that $\norm{U_i}_{L^2\pa{\Omega}}^2=\abs{\Omega}\overline{U_i^2}$, we have
\begin{equation*}
\begin{aligned}
\norm{\overline{U_1}\;\overline{U_2}-\overline{U_3}U_4}_{L^2\pa{\Omega}}^2 &\geq \frac{\abs{\Omega}}{2}\pa{\overline{U_1}^2\overline{U_2}^2 - 2 \overline{U_3}^2 \overline{U_4^2}}\\
&\geq \frac{\abs{\Omega}}{2}\pa{\pa{\frac{M_{13}}{\abs{\Omega}} - \epsilon(1+2|\Omega|^{-1})}\pa{\frac{M_{23}}{\abs{\Omega}} - \epsilon(1+2|\Omega|^{-1})}-2M\epsilon}\\
&\geq \frac{\abs{\Omega}}{2}\pa{\pa{\frac{\min\{M_{13}, M_{23} \}}{\abs{\Omega}} - \epsilon(1+2|\Omega|^{-1})}^2-2M\epsilon}.
\end{aligned}
\end{equation*}
Therefore, if we choose
\begin{equation}\label{epsilon}
	\varepsilon = \min\left\{\frac{\min\{M_{13},M_{23} \}^2}{16 M|\Omega|^2}, \frac{\min\{M_{13},M_{23} \}}{2\pa{2+|\Omega|}}
	\right\}
\end{equation}
then
\begin{equation*}
\norm{\overline{U_1}\,\overline{U_2} - \overline{U_3}U_4}^2_{L^2(\Omega)} \geq  \frac{|\Omega|}{2}\frac{\min\{M_{13},M_{23} \}^2}{8|\Omega|^2} = \frac{\min\{M_{13},M_{23} \}^2}{16|\Omega|}.
\end{equation*}
Much like in Case I, this \emph{uniform bound} and the fact that $\norm{U_4 - \overline{U_4}}_{L^2(\Omega)}^2 \leq M$ imply that
\begin{equation}\label{fifth_estimate}
\norm{\overline{U_1}\;\overline{U_2}-\overline{U_3}U_4}_{L^2\pa{\Omega}}^2 \geq \frac{|\Omega|\min\{M_{13},M_{23} \}^2}{16M}\norm{U_4-\overline{U_4}}^2_{L^2\pa{\Omega}}.
\end{equation}
It is important to note that this is \emph{the only case/sub-case} where an explicit $\epsilon$ was needed. All other cases are valid for \emph{any} $\epsilon>0$. 

\medskip
Combining \eqref{first_estimate}, \eqref{second_estimate}, \eqref{third_estimate}, \eqref{fourth_estimate} and \eqref{fifth_estimate} we have
\begin{equation*}
\begin{gathered}
	\wt D(\bm{U}) \geq 4\min_{i=1,2,3}\{C_{\Omega}^2d_i, 1\}\min\left\{\frac{\varepsilon|\Omega|^{-\frac 2N}}{M}, \frac{\varepsilon \eta}{2},\frac{\eta}{2}\frac{|\Omega|\min\{M_{13}, M_{23} \}^2 }{16M}  \right\}\norm{U_4 - \overline{U_4}}_{L^2(\Omega)}^2\\
	\geq 4\min_{i=1,2,3}\{C_{\Omega}^2d_i,1\}\eta \min\left\{\frac{\varepsilon|\Omega|^{-\frac 2N}}{M}, \frac{\varepsilon }{2}, \frac{|\Omega|\min\{M_{13}, M_{23} \}^2 }{32M}  \right\}\norm{U_4 - \overline{U_4}}_{L^2(\Omega)}^2
	\end{gathered}
\end{equation*}
where $\varepsilon$ is in \eqref{epsilon} and $\eta$ is in \eqref{eta}. Therefore, the desired estimate \eqref{eq:second_D_estimation_N>2} follows immediately with a suitable explicit constant $K_2>0$.

\end{proof}
\begin{lemma}[$N=2$]\label{lem:second_D_estimation_N=2}
Let $\br{u_i}_{i=1,\dots,4}$ be non-negative functions on a bounded domain $\Omega\subset \R^2$. Then for any $\gamma>0$, there exists an explicit $K_{3,\gamma}>0$, depending only on $\gamma$, $\Omega$, $\br{d_i}_{i=1,2,3}$, $M$ (defined in Lemma \ref{lem:second_D_estimation_N>2}) such that
\begin{equation}\label{eq:second_D_estimation_N=2}
\begin{gathered}
\wt{D}(\bm{U})
\geq \frac{K_{3,\gamma}}{1 +\max_{i=1,4}\pa{\norm{u_i}_{L^{1+\gamma}\pa{\Omega}}}}\norm{U_4-\overline{U_4}}^2_{L^2\pa{\Omega}}
\end{gathered}
\end{equation}
where $\wt{D}(\bm{U})$ is defined in \eqref{eq:first_D_estimation}. Note that $\wt{D}(\bm{U})$ is understood to be $+\infty$ if $U_i$ is not $H^1(\Omega)$ for some $i=1,2,3$.
\end{lemma}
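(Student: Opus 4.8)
The plan is to reproduce, essentially line by line, the proof of Lemma \ref{lem:second_D_estimation_N>2}, replacing the critical Sobolev exponent $\frac{2N}{N-2}$ used there by a \emph{subcritical} exponent tailored to the weaker integrability $L^{1+\gamma}$, paying for this with constants that depend on $\gamma$. Since $\wt D(\bm U)=+\infty$ whenever $U_i\notin H^1(\Omega)$ for some $i\in\{1,2,3\}$, and since the claimed lower bound vanishes whenever $\max_{i=1,4}\|u_i\|_{L^{1+\gamma}(\Omega)}=+\infty$, we may assume $U_1,U_2,U_3\in H^1(\Omega)$ and $u_1,u_4\in L^{1+\gamma}(\Omega)$. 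Now fix the exponents: set $s=2(1+\gamma)$ and $r=\tfrac{2(1+\gamma)}{\gamma}$, so that $\tfrac1s+\tfrac1r=\tfrac12$, $r\in(2,\infty)$, and — crucially — $r$ is finite precisely because $\gamma>0$. Because $N=2$, Lemma \ref{PW} applies with the choice $q=r$, giving $\|\nabla f\|_{L^2(\Omega)}\geq C_{\Omega,r}\|f-\overline f\|_{L^r(\Omega)}$ for all $f\in H^1(\Omega)$. Exactly as in \eqref{first_estimate} this reduces matters to bounding from below
\begin{equation*}
\widehat D_\gamma(\bm U):=\sum_{i=1}^3\|\delta_i\|_{L^r(\Omega)}^2+\|U_1U_2-U_3U_4\|_{L^2(\Omega)}^2,\qquad \delta_i:=U_i-\overline{U_i},
\end{equation*}
and the elementary consequences of \eqref{eq:masses} ($\|U_i\|_{L^2}^2\leq M$, $\overline{U_i^2}\leq M/|\Omega|$, $\overline{U_i}^2\leq M/|\Omega|$, $\|\delta_i\|_{L^2}^2\leq M$) are unchanged.

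Next I would run the same two-case dichotomy with the \emph{same} threshold $\epsilon$ from \eqref{epsilon}, which does not depend on the dimension: either $\max_{i=1,2,3}\|\delta_i\|_{L^2}^2\geq\epsilon$ or $\max_{i=1,2,3}\|\delta_i\|_{L^2}^2\leq\epsilon$. In the first case, since $r>2$ one has $\|\delta_i\|_{L^r}^2\geq|\Omega|^{\frac2r-1}\|\delta_i\|_{L^2}^2$, and the argument of \eqref{second_estimate} gives the conclusion directly. In the second case one expands $\|U_1U_2-U_3U_4\|_{L^2}^2$ exactly as in \eqref{eq:crucial_estimate_D_part_I}--\eqref{eq:crucial_estimate_D_part_III}, the only change being that the Hölder estimates \eqref{change_1} and \eqref{change_2} are performed with the conjugate pair $\pa{\tfrac s2,\tfrac r2}$ in place of $\pa{\tfrac N2,\tfrac{N}{N-2}}$ — this uses $\tfrac1s+\tfrac1r=\tfrac12$ together with the identities $\|\sqrt{u_1}\|_{L^s}^2=\|u_1\|_{L^{s/2}}=\|u_1\|_{L^{1+\gamma}}$ and $\|\sqrt{u_4}\|_{L^s}^2=\|u_4\|_{L^{1+\gamma}}$. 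Choosing $\eta$ as in \eqref{eta}, with $\|u_i\|_{L^{N/2}}$ replaced by $\|u_i\|_{L^{1+\gamma}}$ and $|\Omega|^{2/N-1}$ by $|\Omega|^{2/s-1}$, then yields $\widehat D_\gamma(\bm U)\geq\tfrac{\eta}{2}\|\overline{U_1}\,\overline{U_2}-\overline{U_3}U_4\|_{L^2}^2$. The two sub-cases on the size of $\overline{U_3}$ (Sub-case A via $U_4=\tfrac{\overline{U_1}\,\overline{U_2}}{\overline{U_3}}(1+\xi)$; Sub-case B via the lower bounds on $\overline{U_1}^2,\overline{U_2}^2$ and the resulting uniform estimate) then go through verbatim, since they use only the mass bounds and elementary inequalities and no Sobolev embedding; collecting the cases produces the explicit $K_{3,\gamma}>0$ with the stated dependencies.

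I expect there to be no genuinely new difficulty: the dimension entered the proof of Lemma \ref{lem:second_D_estimation_N>2} only through the endpoint embedding $H^1\hookrightarrow L^{2N/(N-2)}$ and the matching Hölder exponent $N/2$, and in two dimensions one simply trades the (unavailable) endpoint embedding for the family $H^1\hookrightarrow L^r$, $r<\infty$, which forces $\tfrac s2=1+\gamma$ and hence a $\gamma$-dependent Poincaré--Wirtinger constant $C_{\Omega,r}$. The only point that requires a moment's care — and hence the ``main obstacle'' — is to confirm that the single place where an explicit value of $\epsilon$ was needed (Sub-case B) is dimension-free, so that the same $\epsilon$ from \eqref{epsilon} can be reused without introducing any circularity; this is indeed the case, as that estimate relies purely on $\overline{U_i^2}$ bounds and the inequality $(a-b)^2\geq\tfrac{a^2}{2}-b^2$.
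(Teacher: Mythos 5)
Your proposal is correct and follows essentially the same route as the paper's proof: the paper likewise reduces the $N=2$ case to the argument of Lemma \ref{lem:second_D_estimation_N>2} by invoking Lemma \ref{PW} with the subcritical exponent $\tfrac{2(1+\gamma)}{\gamma}$, redoing \eqref{change_1} and \eqref{change_2} with the conjugate pair $\pa{1+\gamma,\tfrac{1+\gamma}{\gamma}}$ so that $\tfrac N2$ is replaced by $1+\gamma$, and observing that Cases I, A and B carry over with only the powers of $|\Omega|$ adjusted. Your remark that the threshold $\epsilon$ of \eqref{epsilon} is dimension-free is exactly the point the paper relies on when it declares the remainder of the argument identical.
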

\begin{proof}
The proof of this lemma is almost identical to that of Lemma \ref{lem:second_D_estimation_N>2}, with a few small changes, which we will clarify here.

\medskip
Using the two dimensional version of the Poincar\'e-Wirtinger inequality from Lemma \ref{PW}, we conclude that for any $\gamma>0$ and $i=1,2, 3$,
\begin{equation*}
	\norm{\nabla U_i}_{L^2(\Omega)}^2 \geq C_{\Omega,\gamma}^2\|U_i - \overline{U_i}\|_{L^{\frac{2(1+\gamma)}{\gamma}}}^2,
\end{equation*}
which implies that $\widehat{D}(\bm{U})$ from the previous proof can be changed to 
$$
\widehat{D}(\bm{U}) = \sum_{i=1}^3\|\delta_i\|_{L^{\frac{2\pa{1+\gamma}}{\gamma}}(\Omega)}^2 + \|U_1U_2 - U_3U_4\|_{L^2(\Omega)}^2,
$$
and we obtain
\begin{equation}\nonumber
\begin{aligned}
\wt D(\bm{U})\geq 4\min_{i=1,2,3}\{C_{\Omega,\gamma}^2d_i,1\}\widehat{D}(\bm{U}).
\end{aligned}
\end{equation}
Next, since
\begin{equation*}
\norm{\delta_i}_{L^{\frac{2(1+\gamma)}{\gamma}}(\Omega)}^2  \geq |\Omega|^{-\frac{1}{1+\gamma}}\norm{\delta_i}_{L^2(\Omega)}^2.
\end{equation*}
we see that estimate \eqref{second_estimate} in Case I, i.e.
$\max_{i=1,2,3}\norm{\delta_i}^2_{L^2\pa{\Omega}} \geq \epsilon,$
changes to 
\begin{equation}\nonumber
\begin{aligned}
\widehat{D}\pa{\bm{U}} \geq \frac{ \epsilon \abs{\Omega}^{\frac{\gamma}{1+\gamma}}}{M} \norm{U_4-\overline{U_4}}^2_{L^2\pa{\Omega}}. 
\end{aligned}
\end{equation}
Turning our attention to \eqref{eq:crucial_estimate_D_part_II}, \eqref{change_1} and \eqref{change_2}, we see that 
\begin{equation}\nonumber
\begin{aligned}
&\norm{\delta_1\overline{U_2}}^2_{L^2\pa{\Omega}} \leq \overline{U_2}^2\abs{\Omega}^{\frac{1}{1+\gamma}}\norm{\delta_1}_{L^{\frac{2(1+\gamma)}{\gamma}}(\Omega)}^2 \leq \abs{\Omega}^{-\frac{\gamma}{1+\gamma}}M\norm{\delta_1}_{L^{\frac{2(1+\gamma)}{\gamma}}(\Omega)}^2
\end{aligned}
\end{equation}
and similarly
\begin{equation}\nonumber
\begin{aligned}
&\norm{\delta_2\overline{U_1}}^2_{L^2\pa{\Omega}}\leq \abs{\Omega}^{-\frac{\gamma}{1+\gamma}}M\norm{\delta_2}_{L^{\frac{2(1+\gamma)}{\gamma}}(\Omega)}^2.
\end{aligned}
\end{equation}
Also,
\begin{equation*}
\begin{aligned}
	\int_{\Omega}|\delta_1(x)\delta_2(x)|^2dx &\leq \norm{U_1 - \overline{U_1}}_{L^{2(1+\gamma)}(\Omega)}^2\norm{\delta_2}_{L^{\frac{2(1+\gamma)}{\gamma}}(\Omega)}^2\\
	&\leq 2\left(\norm{U_1}_{L^{2(1+\gamma)}(\Omega)}^2 + \abs{\Omega}^{\frac{1}{1+\gamma}}\overline{U_1}^2\right)\norm{\delta_2}_{L^{\frac{2(1+\gamma)}{\gamma}}(\Omega)}^2\\
	&\leq 2\left(\norm{u_1}_{L^{1+\gamma}(\Omega)}+\abs{\Omega}^{-\frac{\gamma}{1+\gamma}}M \right)\norm{\delta_2}_{L^{\frac{2(1+\gamma)}{\gamma}}(\Omega)}^2
\end{aligned}
\end{equation*}
and
\begin{equation*}
\norm{U_4^2\delta_3^2}_{L^1(\Omega)} \leq \norm{U_4}_{L^{2(1+\gamma)}(\Omega)}^2\norm{\delta_3}_{L^{\frac{2(1+\gamma)}{\gamma}}(\Omega)}^2 = \|u_4\|_{L^{1+\gamma}(\Omega)}\norm{\delta_3}_{L^{\frac{2(1+\gamma)}{\gamma}}(\Omega)}^2.
\end{equation*}
From this point onwards the proof is identical, with $\frac{N}{2}$ replaced by $1+\gamma$, and as such we omit it.
\end{proof}
\begin{lemma}[$N=1$]\label{N=1}
Let $\br{u_i}_{i=1,\dots,4}$ be non-negative functions on a bounded, open interval $\Omega\subset \R$. Then there exists an explicit $K_{4}>0$, depending only on $\Omega$, $\br{d_i}_{i=1,2,3}$, $M$ (defined in Lemma \ref{lem:second_D_estimation_N>2}) such that
\begin{equation}\label{IDE-1D}
\wt{D}(\bm{U})
\geq K_4\norm{U_4-\overline{U_4}}^2_{L^2\pa{\Omega}}
\end{equation}
where $\wt{D}(\bm{U})$ is defined in \eqref{eq:first_D_estimation}. 
Note that $\wt{D}(\bm{U})$ is understood to be $+\infty$ if $U_i$ is not $H^1(\Omega)$ for some $i=1,2,3$.
\end{lemma}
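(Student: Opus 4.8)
The plan is to follow the proof of Lemma~\ref{lem:second_D_estimation_N>2} almost verbatim, the one genuine change being that in dimension one the Poincar\'e--Wirtinger inequality of Lemma~\ref{PW} yields control of $U_i - \overline{U_i}$ in $L^\infty(\Omega)$ rather than in some finite Lebesgue space. Writing $\delta_i = U_i - \overline{U_i}$ for $i = 1, \dots, 4$, one first obtains
\[
\wt D(\bm U) \;\geq\; 4\min_{i=1,2,3}\{C_\Omega^2 d_i,\, 1\}\,\widehat D(\bm U), \qquad \widehat D(\bm U) := \sum_{i=1}^3 \norm{\delta_i}_{L^\infty(\Omega)}^2 + \norm{U_1U_2 - U_3U_4}_{L^2(\Omega)}^2,
\]
after which I would record the same elementary bounds as in the higher-dimensional proof, namely $\norm{U_i}_{L^2(\Omega)}^2 \leq M$, $\overline{U_i}^2 \leq M/|\Omega|$ and $\norm{\delta_i}_{L^2(\Omega)}^2 \leq \min\{M,\, |\Omega|\norm{\delta_i}_{L^\infty(\Omega)}^2\}$, and fix $\epsilon$ exactly as in \eqref{epsilon}.

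Next comes the same dichotomy. If $\max_{i=1,2,3}\norm{\delta_i}_{L^2(\Omega)}^2 \geq \epsilon$, then $\widehat D(\bm U) \geq \max_{i=1,2,3}\norm{\delta_i}_{L^\infty(\Omega)}^2 \geq |\Omega|^{-1}\epsilon$, which together with $\norm{U_4 - \overline{U_4}}_{L^2(\Omega)}^2 \leq M$ gives the claim in this case. If instead $\norm{\delta_i}_{L^2(\Omega)}^2 \leq \epsilon$ for $i = 1, 2, 3$, I would expand $U_1U_2 - U_3U_4 = \overline{U_1}\,\overline{U_2} - \overline{U_3}U_4 + (\delta_1\overline{U_2} + \delta_2\overline{U_1} + \delta_1\delta_2 - \delta_3 U_4)$ and use $(a-b)^2 \geq \tfrac12 a^2 - b^2$ as in \eqref{eq:crucial_estimate_D_part_I}. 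The essential simplification compared with $N \geq 3$ is that every error term is now controlled by pulling an $L^\infty$-norm out of a product: $\norm{\delta_i\overline{U_j}}_{L^2(\Omega)}^2 \leq M\norm{\delta_i}_{L^\infty(\Omega)}^2$, $\int_\Omega |\delta_1\delta_2|^2 dx \leq \norm{\delta_2}_{L^\infty(\Omega)}^2\norm{\delta_1}_{L^2(\Omega)}^2 \leq \epsilon\norm{\delta_2}_{L^\infty(\Omega)}^2$, and $\norm{U_4^2\delta_3^2}_{L^1(\Omega)} \leq \norm{\delta_3}_{L^\infty(\Omega)}^2\norm{U_4}_{L^2(\Omega)}^2 \leq M\norm{\delta_3}_{L^\infty(\Omega)}^2$ --- so, unlike in the higher-dimensional proof, \emph{no $L^q$-bound on $u_1$ or $u_4$ enters}. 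Choosing $\eta = (4M + 4\epsilon + 1)^{-1} \in (0,1)$ then absorbs all these error terms into $\sum_{i=1}^3\norm{\delta_i}_{L^\infty(\Omega)}^2$ and leaves $\widehat D(\bm U) \geq \tfrac{\eta}{2}\norm{\overline{U_1}\,\overline{U_2} - \overline{U_3}U_4}_{L^2(\Omega)}^2$.

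It remains to bound $\norm{\overline{U_1}\,\overline{U_2} - \overline{U_3}U_4}_{L^2(\Omega)}^2$ from below by $\norm{U_4 - \overline{U_4}}_{L^2(\Omega)}^2$, and here the two sub-case arguments in the proof of Lemma~\ref{lem:second_D_estimation_N>2} apply unchanged, being purely algebraic: when $\overline{U_3} \geq \sqrt{\epsilon}$ one writes $U_4 = \overline{U_1}\,\overline{U_2}\,\overline{U_3}^{-1}(1+\xi)$ and obtains a factor $\epsilon$; when $\overline{U_3} \leq \sqrt{\epsilon}$ the conservation laws \eqref{eq:masses} together with the choice \eqref{epsilon} of $\epsilon$ force $\overline{U_1}^2$ and $\overline{U_2}^2$ to stay bounded below, producing the \emph{uniform} lower bound $\tfrac{\min\{M_{13},M_{23}\}^2}{16|\Omega|}$, which combined with $\norm{U_4 - \overline{U_4}}_{L^2(\Omega)}^2 \leq M$ closes this sub-case. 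Tracking the constants through these steps yields the explicit $K_4$. I do not expect any real obstacle here: the whole point is that the one-dimensional Sobolev embedding $H^1(\Omega) \hookrightarrow L^\infty(\Omega)$ removes precisely the dependence on $L^q$-norms of $u_1$ and $u_4$ that was unavoidable for $N \geq 2$, so the only care needed is the bookkeeping of constants.
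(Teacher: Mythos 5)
Your proposal is correct and follows essentially the same route as the paper: the one-dimensional Poincar\'e--Wirtinger embedding $H^1(\Omega)\hookrightarrow L^\infty(\Omega)$ replaces the $L^{2N/(N-2)}$ control, every error term in Case II is handled by pulling out an $L^\infty$-norm of a $\delta_i$ (so the $L^q$-dependence on $u_1,u_4$ disappears), and the two sub-cases on $\overline{U_3}$ go through verbatim. The only cosmetic difference is that you bound $\int_\Omega|\delta_1\delta_2|^2\,dx$ by $\epsilon\norm{\delta_2}_{L^\infty(\Omega)}^2$ using the Case II hypothesis where the paper uses $4M\norm{\delta_2}_{L^\infty(\Omega)}^2$; both are absorbed the same way.
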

\begin{proof}
Much like the Proof of Lemma \ref{lem:second_D_estimation_N=2}, the proof of this lemma is based on the proof of Lemma \ref{lem:second_D_estimation_N>2}, with appropriate modification due to improved embedding of the Poincar\'e-Wirtinger inequality. In the one dimensional setting, this equates to the embedding  $H^1(\Omega)\hookrightarrow L^\infty(\Omega)$.\\
Again, we only show the required changes in our appropriate estimates. As was mentioned, the  Poincar\'e-Wirtinger inequality in $N=1$ reads as
\begin{equation*}
\norm{\frac{d}{dx}U_i}_{L^2(\Omega)}^2 \geq C_{\Omega}^2\norm{U_i - \overline{U_i}}_{L^\infty(\Omega)}^2,
\end{equation*}
giving us the modified production
$$
\widehat{D}(\bm{U}) = \sum_{i=1}^3\|\delta_i\|_{L^{\infty}(\Omega)}^2 + \|U_1U_2 - U_3U_4\|_{L^2(\Omega)}^2.
$$
Since
\begin{equation*}
	\norm{\delta_i}_{L^\infty(\Omega)}^2 \geq |\Omega|^{-1}\norm{\delta_i}_{L^2(\Omega)}^2,
\end{equation*}
Case I is again secured. The estimates \eqref{eq:crucial_estimate_D_part_II}, \eqref{change_1} and \eqref{change_2} become
\begin{equation}\nonumber
\begin{aligned}
&\norm{\delta_1\overline{U_2}}^2_{L^2\pa{\Omega}} \leq M\norm{\delta_1}_{L^{\infty}(\Omega)}^2 \\
&\norm{\delta_2\overline{U_1}}^2_{L^2\pa{\Omega}} \leq M\norm{\delta_2}_{L^{\infty}(\Omega)}^2
\end{aligned}
\end{equation}
and 
\begin{equation*}
\begin{gathered}
	\int_{\Omega}|\delta_1(x)\delta_2(x)|^2dx \leq \norm{U_1 - \overline{U_1}}_{L^2(\Omega)}^2\norm{\delta_2}_{L^\infty(\Omega)}^2\\
	 \leq 2\left(\|u_1\|_{L^1(\Omega)} + \abs{\Omega}\overline{U_1}^2 \right)\norm{\delta_2}_{L^\infty(\Omega)}^2 \leq 4M\norm{\delta_2}_{L^\infty(\Omega)}^2.
	\end{gathered}
\end{equation*}
Lastly
\begin{equation*}
	\norm{U_4^2\delta_3^2}_{L^1(\Omega)} \leq \norm{U_4}_{L^2(\Omega)}^2\norm{\delta_3}_{L^\infty(\Omega)}^2 \leq M\norm{\delta_3}_{L^\infty(\Omega)}^2.
\end{equation*}
From this point onwards the proof is again exactly as in Lemma \ref{lem:second_D_estimation_N>2}, though there is \emph{no longer dependency in $\br{u_i}_{i=1,\dots,4}$}, which is why $K_4$ is a fixed constant that depends only on the masses $M_{1,3},M_{2,4}$ and $M_{1,4}$.
\end{proof}
We now have the tools to prove our desired functional inequality. 
\begin{proof}[Proof of Theorem \ref{thm:entropy_method}]
We start by considering the function 
$$\Phi(x,y)=\frac{x\log\pa{\frac{x}{y}}-x+y}{\pa{\sqrt{x}-\sqrt{y}}^2}=\frac{\phi\pa{\frac{x}{y}}}{\pa{\sqrt{\frac{x}{y}}-1}^2},$$
where $\varphi(z) = z\log z - z + 1$. 
It is simple to see that $\Phi$ is continuous on any subset of $\pa{\R_+\cup\br{0}}\times\R_+$. Moreover, as 
$$\lim_{s\rightarrow\infty}\frac{\phi(s)}{s\log s}=1,\quad \lim_{s\rightarrow 0^+}\phi(s)=1$$
we see that we can find a uniform constant, $C>0$, such that $\Phi(x,y) \leq C\max\pa{1,\log \pa{\frac{x}{y}}}$.\\
Thus, we have that 
\begin{equation}\label{eq:first_entropy_estimation}
\begin{aligned}
H\pa{\bm{u}|\bm{u_\infty}}&=\sum_{i=1}^4\int_{\Omega}\phi\pa{\frac{u_i(x)}{u_{i,\infty}}}u_{i,\infty}dx\\
&=\sum_{i=1}^4\int_{\Omega}\Phi\pa{u_i(x),u_{i,\infty}} \pa{U_i(x)-U_{i,\infty}}^2 dx\\
&\leq C\max_{i=1,\dots,4}\pa{1,\log \pa{{\norm{u_i}_{L^\infty\pa{\Omega}}+ 1}}+\abs{\log u_{i,\infty}}}\sum_{i=1}^4 \norm{U_i-U_{i,\infty}}^2_{L^2\pa{\Omega}}.
\end{aligned}
\end{equation}
Using $(a+b+c)^2 \leq 3(a^2+b^2+c^2)$, we have
\begin{align*}
\norm{U_i-U_{i,\infty}}^2_{L^2\pa{\Omega}} &\leq 3\pa{\norm{U_i-\overline{U_i}}^2_{L^2\pa{\Omega}}+\norm{\overline{U_i} -\sqrt{\overline{U_i^2}} }_{L^2(\Omega)}^2 + \norm{\sqrt{\overline{U_i}^2}-U_{i,\infty}}^2_{L^2\pa{\Omega}}}\\
&\leq 6\pa{\norm{U_i-\overline{U_i}}^2_{L^2\pa{\Omega}} + |\Omega|\abs{\sqrt{\overline{U_i^2}}-U_{i,\infty}}^2}
\end{align*}
where at the last step we used the fact that 
\begin{equation*}
\abs{\sqrt{\overline{U_i^2}}-\overline{U_i}}^2=\overline{U_i^2}+\overline{U_i}^2 - 2 \sqrt{\overline{U_i^2}}\overline{U_i} \underset{\overline{U_i} \leq \sqrt{\overline{U_i^2}}}{\leq} \overline{U_i^2}-\overline{U_i}^2 =\frac{\norm{U_i-\overline{U_i}}^2_{L^2\pa{\Omega}}}{\abs{\Omega}}
\end{equation*}
Therefore,
\begin{equation}\label{estimate_H}
	H(\bm{u}|\bm{u}_\infty) \leq C\pa{1+\max_{i=1,\ldots, 4}\log\pa{\norm{u_i}_{L^\infty(\Omega)}+1}}\!\pa{\sum_{i=1}^4\norm{U_i-\overline{U_i}}^2_{L^2\pa{\Omega}} + \sum_{i=1}^4\left|\sqrt{\overline{U_i^2}}-U_{i,\infty}\right|^2}.
\end{equation}
From Lemmas \ref{lem:second_D_estimation_N>2}, \ref{lem:second_D_estimation_N=2} and \ref{N=1}, we know that we can find a constant $K_5$ such that
\begin{equation*}
	D(\bm{u}) \geq \frac{K_5}{1+\max_{i=1,\ldots, 4}\pa{\norm{u_i}_{L^q(\Omega)}} }\norm{U_4 - \overline{U_4}}_{L^2(\Omega)}^2
\end{equation*}
where $q = \frac N2$ when $N\geq 3$, $q = 1+\gamma$ for an arbitrary $\gamma > 0$ when $N=2$, and
\begin{equation*}
	D(\bm{u}) \geq K_5\norm{U_4 - \overline{U_4}}_{L^2(\Omega)}^2,
\end{equation*}
when $N=1$. Using the above with the definition of $D(\bm{u})$, Lemma \ref{lem:first_D_estimation} and the Poincar\'e inequality
$$\norm{U_i - \overline{U_i}}_{L^2(\Omega)}^2 \leq C_{\Omega}\norm{\nabla U_i}_{L^2}^2,$$
we see that we can find an appropriate constant $K_6$ that depends only on $\Omega$ and the diffusion coefficients, such that 
\begin{equation}\label{first_D}
\begin{gathered}
	\frac 12 D(\bm{u})= \frac 14 D(\bm{u})+\frac 14 D(\bm{u}) \geq C_{\Omega,\br{d_i}_{i=1,\dots,3}} \sum_{i=1}^3\norm{U_i - \overline{U_i}}_{L^2(\Omega)}^2\\
	+\frac{K_5}{4\pa{1+\max_{i=1,\ldots, 4}\pa{\norm{u_i}_{L^q(\Omega)}}} }\norm{U_4 - \overline{U_4}}_{L^2(\Omega)}^2 \\
	\geq \frac{K_6}{1+\max_{i=1,\ldots, 4}\pa{\norm{u_i}_{L^q(\Omega)}} }\sum_{i=1}^4\norm{U_i - \overline{U_i}}_{L^2(\Omega)}^2,
	\end{gathered}
\end{equation}
and 
\begin{equation}\label{second_D}
	\frac 12 D(\bm{u}) \geq \frac{K_6}{1+\max_{i=1,\ldots, 4}\pa{\norm{u_i}_{L^q(\Omega)}}}\rpa{\sum_{i=1}^4\norm{U_i - \overline{U_i}}_{L^2(\Omega)}^2 + \norm{U_1U_2 - U_3U_4}_{L^2(\Omega)}^2}.
\end{equation}
Using the estimate
\begin{equation}\label{third_D}
	\sum_{i=1}^4\norm{U_i - \overline{U_i}}_{L^2(\Omega)}^2 + \norm{U_1U_2 - U_3U_4}_{L^2(\Omega)}^2 \geq K_7\sum_{i=1}^4\abs{\sqrt{\overline{U_i^2}} - U_{i,\infty}}^2,
\end{equation}
which was shown in greater generality in \cite[estimate (61) in Section 4]{FLT19}, we can now combine \eqref{estimate_H}, \eqref{first_D}, \eqref{second_D} and \eqref{third_D} to conclude \eqref{eq:entropy_method} and finish the proof of Theorem \ref{thm:entropy_method}.
\end{proof}
\subsection{Conditional convergence to equilibrium}\label{sec:convergence}
The previous subsection was dedicated to finding the functional inequality that governs the entropy method, applied to our system of equations. In this short subsection we will show how we can use this inequality to obtain quantitative decay to zero of the relative entropy, under the assumption that the solutions to \eqref{eq:sys} have a certain growth for appropriately relevant norms, appearing in Theorem \ref{thm:entropy_method}. Showing such estimates will be the focus of our next Section, where we will study the global well-posedness for \eqref{eq:sys}. We have elected to leave this short subsection relatively general, to both emphasise the generality of the method and to allow for future progress in bounds of our system \eqref{eq:sys} to have an immediate consequence in the question of convergence to equilibrium. 

\medskip
Our only theorem for this subsection is
\begin{theorem}\label{thm:convergence_general}
Let $\Omega$ be a bounded domain of $\R^N$, $N\geq 1$, with smooth boundary, $\partial \Omega$, and let $\br{u_i(t)}_{i=1,\dots,4}$ be a classical solution for the system of equations \eqref{eq:sys}. Assume that there exist constants $\mu \geq 0$, $0\leq \alpha<1$, and $C_{\mu,\alpha}>0$ such that
\begin{equation}\label{eq:convergence_bound_condition}
\sup_{t\geq 0}\pa{\max_{i=1,\dots,4}(1+t)^{-\mu }\norm{u_i(t)}_{L^\infty\pa{\Omega}} + \max_{i=1,4}(1+t)^{-\alpha}\norm{u_i(t)}_{L^q(\Omega)}} \leq C_{\mu,\alpha},
\end{equation}
where
\begin{equation*}
	q = \begin{cases}
		\frac N2 &\text{ when } N\geq 3,\\
		1+\gamma \text{ for some } \gamma > 0 &\text{ when } N = 2,\\
		1 &\text{ when } N = 1.
	\end{cases}
\end{equation*}
Then for any $\varepsilon>0$ with $\alpha+\varepsilon<1$ we have
\begin{equation*}
	H(\bm{u}(t)|\bm{u}_\infty) \leq H(\bm{u}_0| \bm{u}_\infty)e^{-C_{\alpha, \varepsilon}\pa{1+t}^{1-\alpha-\varepsilon}} \quad \text{ for all } \quad t\geq 0.
\end{equation*}
Moreover, if $\mu = 0$, which is equivalent to
\begin{equation}\label{uniform_Linf_bound}
	\sup_{i=1,\ldots, 4}\sup_{t\geq 0} \norm{u_i(t)}_{L^\infty(\Omega)} < +\infty,
\end{equation}
we have the full exponential convergence
\begin{equation*}
	H(\bm{u}(t)|\bm{u}_\infty) \leq Ce^{-\lambda_0 t} \quad \text{ for all } \quad t\geq 0,
\end{equation*}
for some $\lambda_0 > 0$.
In both cases the constants that govern the convergence are explicit, and depend only on the domain $\Omega$, the dimension $N$, the diffusion coefficients $d_1, d_2, d_3$, the initial masses $M_{13}, M_{23}, M_{14}$, and in the case where $N=2$, also on $\gamma>0$.
\end{theorem}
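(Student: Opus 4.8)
The plan is to feed the entropy--entropy-production inequality of Theorem~\ref{thm:entropy_method} into the entropy dissipation law and then integrate a non-autonomous Gr\"onwall inequality. I would begin by recording the dissipation identity rigorously. Since $\br{u_i}_{i=1,\dots,4}$ is a classical, non-negative solution, the strong maximum principle gives $u_i(\cdot,t)>0$ on $\overline{\Omega}$ for $t>0$ and $i=1,2,3$, while for $i=4$ the integrating-factor representation
\begin{equation*}
u_4(x,t)=u_{4,0}(x)\,e^{-\int_0^t u_3(x,s)\,ds}+\int_0^t e^{-\int_s^t u_3(x,r)\,dr}\,u_1(x,s)u_2(x,s)\,ds
\end{equation*}
shows $u_4(\cdot,t)>0$ as well. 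Hence $\sqrt{u_i(\cdot,t)}\in H^1(\Omega)$, the map $t\mapsto H(\bm u(t)|\bm u_\infty)$ is $C^1$ on $(0,\infty)$ and continuous at $0$, and differentiating under the integral, using \eqref{eq:sys}, integrating by parts with the compatibility conditions for $i=1,2,3$, and invoking $u_{1,\infty}u_{2,\infty}=u_{3,\infty}u_{4,\infty}$ yields the exact relation $\tfrac{d}{dt}H(\bm u(t)|\bm u_\infty)=-D(\bm u(t))$ for all $t>0$, with $D$ as in \eqref{eq:entropy_production}; in particular $t\mapsto H(\bm u(t)|\bm u_\infty)$ is non-increasing.

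Next, Theorem~\ref{thm:entropy_method} gives, for $t>0$,
\begin{equation*}
-\frac{d}{dt}H(\bm u(t)|\bm u_\infty)=D(\bm u(t))\geq \frac{H(\bm u(t)|\bm u_\infty)}{K_1\pa{1+\max_{i}\log\pa{\norm{u_i(t)}_{L^\infty(\Omega)}+1}}\pa{1+\max_{i=1,4}\norm{u_i(t)}_{L^q(\Omega)}}}.
\end{equation*}
The hypothesis \eqref{eq:convergence_bound_condition} bounds $\log\pa{\norm{u_i(t)}_{L^\infty(\Omega)}+1}\leq C\pa{1+\log(1+t)}$ and $\norm{u_i(t)}_{L^q(\Omega)}\leq C_{\mu,\alpha}(1+t)^\alpha$ for $i=1,4$, so the denominator above is at most $C\pa{1+\log(1+t)}(1+t)^\alpha$. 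For $\varepsilon>0$ with $\alpha+\varepsilon<1$, the elementary bound $1+\log(1+t)\leq C_\varepsilon(1+t)^\varepsilon$ then produces
\begin{equation*}
-\frac{d}{dt}H(\bm u(t)|\bm u_\infty)\geq \frac{H(\bm u(t)|\bm u_\infty)}{C_\varepsilon(1+t)^{\alpha+\varepsilon}},\qquad t>0.
\end{equation*}

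Writing $h(t)=H(\bm u(t)|\bm u_\infty)$: if $h$ vanishes at some time it vanishes thereafter and the claim is trivial, so assume $h>0$; then $\tfrac{d}{dt}\log h(t)\leq -\tfrac1{C_\varepsilon}(1+t)^{-(\alpha+\varepsilon)}$, and integrating over $[0,t]$ using $\alpha+\varepsilon<1$ gives $h(t)\leq h(0)\exp\pa{-C_{\alpha,\varepsilon}\bigl((1+t)^{1-\alpha-\varepsilon}-1\bigr)}$, which is the asserted stretched-exponential decay of the relative entropy (the benign factor $e^{C_{\alpha,\varepsilon}}$ arising from the ``$-1$'' may be absorbed into the constant). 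In the case $\mu=0$ --- equivalently \eqref{uniform_Linf_bound} --- boundedness of $\Omega$ also forces $\sup_{t\geq0}\norm{u_i(t)}_{L^q(\Omega)}<\infty$, so both prefactors in Theorem~\ref{thm:entropy_method} are bounded by a $t$-independent constant $C$; then $-\tfrac{d}{dt}h(t)\geq \tfrac1C h(t)$ and Gr\"onwall gives $h(t)\leq h(0)e^{-t/C}$, i.e.\ full exponential convergence with $\lambda_0=1/C$. Every constant is explicit through $K_1$, $C_{\mu,\alpha}$, $\alpha$, $\varepsilon$, hence depends only on the data listed in the statement.

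I expect the main obstacle to be not in this argument, which is a Gr\"onwall/ODE-comparison exercise once Theorem~\ref{thm:entropy_method} is in hand, but in its two supporting points: making the entropy dissipation identity $\tfrac{d}{dt}H=-D$ fully rigorous along the degenerate flow --- especially the strict positivity of the non-diffusive component $u_4$ for $t>0$, without which one cannot differentiate through the logarithm --- and the observation that the logarithmic-in-time prefactor can only be absorbed into a power $(1+t)^{\varepsilon}$, which is exactly what forces the $\varepsilon$-loss in the stretched exponent and explains why a genuine exponential rate requires the uniform $L^\infty$ bound $\mu=0$.
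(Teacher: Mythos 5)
Your proposal is correct and follows essentially the same route as the paper: feed the functional inequality of Theorem~\ref{thm:entropy_method} into the dissipation law $\frac{d}{dt}H=-D$, absorb the logarithmic prefactor into $(1+t)^{\varepsilon}$ to get $H/D\leq K(1+t)^{\alpha+\varepsilon}$, and integrate the resulting non-autonomous differential inequality, with the $\mu=0$ case reducing to a constant prefactor and a standard Gr\"onwall argument. The only difference is that you spell out the positivity of the solution and the rigour of the dissipation identity, which the paper treats as given for classical solutions.
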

\begin{proof}
From \eqref{eq:convergence_bound_condition} we have
\begin{equation*}
	\norm{u_i(t)}_{L^\infty(\Omega)} \leq C_{\mu,\alpha}(1+t)^{\mu} \quad \text{ for all } i=1,\ldots, 4,
\end{equation*}
and
\begin{equation*}
	\norm{u_i(t)}_{L^q(\Omega)} \leq C_{\mu,\alpha}(1+t)^{\alpha} \quad \text{ for } \quad i=1,4.
\end{equation*}
This, together with \eqref{eq:entropy_method} from Theorem \ref{thm:entropy_method}, imply that
\begin{equation*}
	\frac{H(\bm{u}(t)|{\bm{u}_\infty})}{D(\bm{u}(t))} \leq K_1\log\pa{C_{\mu,\alpha}(1+t)^\mu + 1}\pa{1+C_{\mu,\alpha}(1+t)^{\alpha}} \leq K\pa{1+t}^{\alpha+\varepsilon}
\end{equation*}
for some $K$ depending on $K_1$, $C_{\mu,\alpha}$, $\mu$, and $\varepsilon>0$. As $K_1$ depends only on parameters of the problem, and conserved quantities, we conclude that $K$ is a constant that is independent of time, and as such
$$-\frac{d}{dt}H\pa{\bm{u}(t)|\bm{u}_\infty}=D\pa{\bm{u}(t)} \geq \frac{H\pa{\bm{u}(t)|\bm{u}_\infty}}{K\pa{1+t}^{\alpha+\varepsilon}}.$$
Thus,
$$H\pa{\bm{u}(t)|\bm{u}_{\infty}}  \leq H\pa{\bm{u}_0|\bm{u}_{\infty}}e^{-\frac{K}{1-\alpha-\epsilon}\pa{1+t}^{1-\alpha-\epsilon}}, $$
which is the desired estimate.

\medskip
In the case where $\mu = 0$ we have for any $1\leq p \leq \infty$,
\begin{equation*}
	\sup_{i=1,\ldots, 4}\sup_{t\geq 0}\norm{u_i(t)}_{L^p(\Omega)} \leq C,
\end{equation*}
since the domain is bounded and as such 
$$\norm{u}_{L^p\pa{\Omega}}\leq \abs{\Omega}^{\frac{1}{p}}\norm{u}_{L^\infty\pa{\Omega}}.$$
Using Theorem \ref{thm:entropy_method} we conclude that
\begin{equation*}
\frac{H(\bm{u}(t)|\bm{u}_\infty)}{D(\bm{u}(t))} \leq \lambda_0
\end{equation*}
for some $\lambda_0>0$, which leads to 
\begin{equation*}
	\frac{d}{dt}H(\bm{u}(t)|u_\infty) = D(\bm{u}(t)) \leq  -\lambda_0 H(\bm{u}(t)|\bm{u}_\infty)
\end{equation*}
and consequently
\begin{equation*}
	H(\bm{u}(t)|\bm{u}_\infty) \leq e^{-\lambda_0t}H(\bm{u}_0|\bm{u}_\infty).
\end{equation*}
\end{proof}

\section{Global existence of solutions}\label{sec:bounds}
Now that we have concluded our investigation of the entropy method, we turn our attention to the systematic study of \eqref{eq:sys}: the existence of global bounded solutions to it, and time estimates on appropriately relevant norms.
 
Let us consider our system of equations more closely: the first three equations, which describe the evolution of $\br{u_i}_{i=1,\dots,3}$, include diffusion. As such, if the reaction relents in the end, one expects that the solutions to such equations would remain bounded uniformly in time. The last equation, for $u_4$, is more of an ODE in its nature than a full blown PDE, and can provide us with no further regularity on the solution than the initial datum gives. Moreover, the non-negativity of the solutions implies
$$\partial_t u_4(x,t) \leq u_1(x,t) u_2(x,t) \leq \norm{u_1(t)}_{L^\infty\pa{\Omega}}\norm{u_2(t)}_{L^\infty\pa{\Omega}},$$
which tells us that, even when $u_1$ and $u_2$ are uniformly bounded in time, $u_4$ might grow linearly with $t$.

The above considerations are, unfortunately, more of an intuition than an actual fact. In fact, since potential growth of $u_4$ can considerably affect $\br{u_i}_{i=1,\dots,3}$, we cannot achieve the desired uniform bound of $L^\infty(\Omega)$ norm on them so readily. Interestingly, we \emph{are able} to achieve such a uniform bound on $u_3$, and additionally we can estimate the growth in time for the $L^p(\Omega)$ norm of $u_1$ and $u_2$, for any $1<p<\infty$. These results are expressed in the next two propositions, which are the main results of this section.
\begin{proposition}[$N=1,2$]\label{Global1D}
Consider a bounded domain $\Omega\subset \R^N$, $N = 1,2$ with smooth boundary, $\partial \Omega$. Then, for any non-negative and smooth initial data $\bm{u}_0$  that satisfies the compatibility condition, \eqref{eq:sys} has a unique classical global solution which satisfies
\begin{equation}\label{eq:bound_u_3}
\sup_{t\geq 0}\|u_3(t)\|_{L^\infty(\Omega)} \leq C < +\infty,
\end{equation}
and 
\begin{equation}\label{eq:bound_u124}
	\norm{u_1(t)}_{L^\infty(\Omega)} + \norm{u_2(t)}_{L^\infty(\Omega)} + \norm{u_4(t)}_{L^\infty(\Omega)} \leq C(1+t)^{\mu}
\end{equation}
for some  {\normalfont explicit} $C>0$ and $\mu \geq 0$. Moreover, for any $0<\epsilon<\mu$, there exist  {\normalfont explicit} $\gamma>0$ and  $C_{\gamma,\epsilon}>0$ such that
\begin{equation}\label{eq:bound_u_1_u_2}
\norm{u_1(t)}_{L^{1+\gamma}(\Omega)} + \norm{u_2(t)}_{L^{1+\gamma}(\Omega)} + \norm{u_4(t)}_{L^{1+\gamma}(\Omega)} \leq C_{\gamma,\varepsilon}(1+t)^{\varepsilon}
\end{equation}
for all $t\geq 0$. All constants here depend only on the parameters of the problem and the initial datum.
\end{proposition}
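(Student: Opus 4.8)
The plan is to combine a duality estimate for the diffusive components with a bootstrap argument based on maximal regularity and the smoothing properties of the heat semigroup. The starting observation is that $u_1,u_2,u_3$ have nonnegative right-hand sides controlled by the single nonlinearity $u_1u_2$, and that adding the equations for $u_i+u_j$ (for $i=1,2$, $j=3,4$) produces quantities whose mass is conserved. First I would establish a local classical solution from \cite{Ama85} using the assumed smoothness of the data and domain, and note that positivity is preserved because the nonlinearities are quasi-positive. The central step is a \emph{duality lemma}: testing a suitable linear combination of the first three equations against the solution of a dual backward parabolic problem gives an $L^2(\Omega_T)$ (in fact $L^p(\Omega_T)$ for every $p<\infty$ when $N\le 2$) bound on $u_1,u_2,u_3$ with a constant $C_T$ that grows at most algebraically in $T$; this is exactly the kind of estimate used in \cite{CDF14,DF15}. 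From $u_1u_2\in L^{p}(\Omega_T)$ one controls $u_4(t)=u_{4,0}+\int_0^t(u_1u_2-u_3u_4)\,ds$, and since $\partial_t u_4\le u_1u_2$, one gets $\|u_4(t)\|_{L^p(\Omega)}\le \|u_{4,0}\|_{L^p(\Omega)}+\int_0^t\|u_1(s)u_2(s)\|_{L^p(\Omega)}\,ds$, which is at most $C_T$.

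Next I would run the bootstrap for $u_1,u_2,u_3$. The key point in dimension $N\le 2$ is that $L^p(\Omega_T)$ bounds on the reaction terms, fed through maximal $L^p$-regularity for $\partial_t - d_i\Delta$ with homogeneous Neumann conditions (Lemma \ref{lem_mr}), upgrade $u_i$ into $W^{2,1}_p(\Omega_T)$, and then Sobolev embedding in the parabolic cylinder gives $u_i\in L^{\infty}(\Omega_T)$ once $p$ exceeds $\tfrac{N+2}{2}$ — which is automatic here since $p$ can be taken arbitrarily large. Thus $\sup_{t\in[0,T]}\|u_i(t)\|_{L^\infty(\Omega)}\le C_T$ for $i=1,2,3$, and consequently $\|u_4(t)\|_{L^\infty(\Omega)}\le C_T$ as well; in particular the solution is global, since no norm blows up in finite time. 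This gives \eqref{eq:bound_u124} with an \emph{a priori} algebraic exponent $\mu$ coming from the $C_T$ dependence.

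To get the genuinely uniform-in-time bound \eqref{eq:bound_u_3} on $u_3$, I would use the structure more carefully. Summing the $u_3$ and $u_4$ equations gives $\partial_t(u_3+u_4)-d_3\Delta u_3 = 0$, so $w:=u_3+u_4$ satisfies $\partial_t w - d_3\Delta w = -d_3\Delta u_4$; alternatively, since $u_3u_4\ge 0$, the equation for $u_3$ reads $\partial_t u_3 - d_3\Delta u_3\le u_1u_2$, and one can instead exploit that $u_3+u_1$ and $u_3+u_2$ satisfy conservative equations with bounded (indeed $L^1$-conserved) mass. The cleanest route is: from $\partial_t(u_1+u_3)-\Delta(d_1u_1+d_3u_3)=0$ and $\partial_t(u_2+u_3)-\Delta(d_2u_2+d_3u_3)=0$ one obtains, via the duality/$L^p$ machinery applied on a moving time window $[t,t+1]$ together with the conserved $L^1$ masses \eqref{eq:mass_bounds}, bounds on $\|u_3\|_{L^p(\Omega\times(t,t+1))}$ that are \emph{uniform in $t$}; feeding these into maximal regularity and parabolic embedding on unit time intervals (using that the heat semigroup forgets initial data exponentially, so the estimate does not accumulate) yields $\sup_{t\ge 1}\|u_3(t)\|_{L^\infty(\Omega)}\le C$, and combining with the local bound on $[0,1]$ gives \eqref{eq:bound_u_3}. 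Finally, \eqref{eq:bound_u_1_u_2} follows by interpolating: once $u_3$ is uniformly bounded, the equations for $u_1,u_2$ have right-hand sides $\le u_3u_4\le C u_4$, so testing with $u_i^{\gamma}$ and using the (weak) algebraic growth of $u_4$ already obtained, together with Gronwall on the resulting differential inequality, produces an $L^{1+\gamma}(\Omega)$ bound growing like $(1+t)^{\varepsilon}$ for $\gamma=\gamma(\varepsilon)$ small; the smallness of $\gamma$ is what converts the linear-in-$t$ growth of $u_4$ into an arbitrarily slow power growth after taking roots.

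I expect the main obstacle to be the \emph{uniform-in-time} (rather than merely $C_T$) control of $u_3$ in $L^\infty$: the naive bootstrap only gives constants growing with $T$, and one must argue on sliding unit time windows and invoke the exponential decay of the heat semigroup on the orthogonal complement of constants to prevent the estimates from accumulating, all while carefully tracking which quantities are genuinely conserved ($L^1$ masses) versus only bounded on finite horizons. The secondary technical point is organizing the duality argument so that the dual problem's regularity constants are independent of the time horizon, which is where the dimension restriction $N\le 2$ (allowing arbitrarily large integrability exponents) is essential.
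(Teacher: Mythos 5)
Your overall architecture matches the paper's: local existence via \cite{Ama85}, a duality estimate seeding a maximal-regularity bootstrap to get $C_T$-bounds and global existence, sliding unit time windows plus the conserved masses for the uniform-in-time bound on $u_3$, and interpolation for \eqref{eq:bound_u_1_u_2}. However, two steps that you treat as routine are exactly where the work lies, and as written they are gaps. First, the parenthetical claim that duality gives ``$L^p(\Omega_T)$ for every $p<\infty$ when $N\le 2$'' is not what duality alone delivers: the classical estimate gives only $L^2(\Omega_T)$, so $u_1u_2\in L^1(\Omega_T)$, which is below the threshold $p>\frac{N+2}{2}$ (equal to $2$ when $N=2$) needed to even start the bootstrap. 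The dimension-specific input the paper supplies is Lemma \ref{improved_duality_12D}: the maximal-regularity constant satisfies $C_{\mathrm{mr},2}\le 1$ (an energy estimate), and Riesz--Thorin interpolation then yields some $p_*<2$ with $\frac{|d_1-d_3|}{d_1+d_3}C_{\mathrm{mr},p_*}<1$ for \emph{arbitrary} positive diffusion coefficients; this makes the improved duality estimate of Lemma \ref{lem_b1} available unconditionally with $p_0=\frac{p_*}{p_*-1}>2\ge\frac{N+2}{2}$, which is the seed your bootstrap needs. Without this (or an equivalent argument), your proof does not get off the ground in dimension $2$.

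Second, your mechanism for the uniform-in-time bound on $u_3$ --- ``the heat semigroup forgets initial data exponentially'' --- does not close: the exponential decay of $e^{d_3\Delta t}$ holds only on mean-zero functions, and $\overline{u_3}$ is \emph{not} conserved (only $\overline{u_1+u_3}$ etc.\ are), so on each window $[n,n+1]$ the Duhamel term $S(1)u_3(n)$ carries the full norm you are trying to bound and the estimates accumulate. The paper avoids this by multiplying with a time cut-off $\varphi_\tau$ vanishing at $t=\tau$ (so the window problem has zero data), and then interpolating the resulting boundary term against the \emph{conserved} $L^1$ mass to obtain a sublinear recursion $a_{n+1}\le \mathcal{C}a_n^{1-\alpha}$ for $a_n=\|u_1+u_3\|_{L^{p_0}(\Omega_{n,n+1})}$, which is bounded by Lemma \ref{elementary}; a window-wise bootstrap (Lemmas \ref{lem:second_duality} and \ref{lem:L_infty_bound_on_u_3}) then lifts this to $L^\infty$. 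You correctly identify this as the main obstacle, but the fix you propose would not work as stated. Finally, note that \eqref{eq:bound_u_1_u_2} needs no testing with $u_i^\gamma$ or Gronwall: it is immediate from $\|u_i\|_{L^{1+\gamma}}\le\|u_i\|_{L^\infty}^{\gamma/(1+\gamma)}\|u_i\|_{L^1}^{1/(1+\gamma)}$, the conservation laws \eqref{eq:mass_bounds}, and \eqref{eq:bound_u124}, with $\gamma=\epsilon/(\mu-\epsilon)$.
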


\begin{proposition}[$N\geq 3$]\label{Global3D}
Consider a bounded domain $\Omega\subset \R^N$, $N \geq 3$ with smooth boundary, $\partial \Omega$. Assume moreover that
\begin{equation}\label{quasi-uniform}
	\frac{|d_i - d_3|}{d_i+d_3} < \frac{1}{C_{\mathrm{mr},p_0'}}
\end{equation}
with $i=1$ or $i=2$, and $p_0' > 1$ such that
\begin{equation*}
	p_0 = \frac{p_0'}{p_0' - 1} > \frac{N+2}{2},
\end{equation*}
where $C_{\mathrm{mr},p_0'}$ is a fixed constant that relates to the maximal regularity property of the heat equation (see Lemma \ref{lem_mr}). Then, for any non-negative and smooth initial data $\bm{u}_0$  that satisfies the compatibility condition, \eqref{eq:sys} has a unique classical global solution which satisfies
\begin{equation}\label{3D_u3}
	\sup_{t\geq 0}\norm{u_3(t)}_{L^\infty(\Omega)} \leq C <+\infty,
\end{equation}
and, there exist {\normalfont explicit} constants $\mu \geq 0$ and  $0\leq\alpha < 1$ such that
\begin{equation}\label{3D_u124_Linfty}
	\norm{u_1(t)}_{L^{\infty}(\Omega)} + \norm{u_2(t)}_{L^{\infty}(\Omega)} + \norm{u_4(t)}_{L^{\infty}(\Omega)} \leq C(1+t)^\mu,
\end{equation}
\begin{equation}\label{3D_u124}
	\norm{u_1(t)}_{L^{\frac N2}(\Omega)} + \norm{u_2(t)}_{L^{\frac N2}(\Omega)} + \norm{u_4(t)}_{L^{\frac N2}(\Omega)} \leq C(1+t)^\alpha \quad \text{ for all } \quad t\geq 0.
\end{equation}
All constants here depend only on the parameters of the problem and the initial datum.
\end{proposition}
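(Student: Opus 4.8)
The plan is to follow the same skeleton as the proof of Proposition~\ref{Global1D}, replacing the two-dimensional ``almost $L^\infty$'' heat regularisation by an \emph{improved duality} argument; the closeness hypothesis \eqref{quasi-uniform} will be used exactly once, and precisely to make that argument go through, since for $N\ge 3$ the unconditional duality lemma only yields an $L^{2}(\Omega_T)$ bound while $2\le\frac{N+2}{2}$. First I would invoke \cite{Ama85} to obtain a unique classical solution on a maximal interval $[0,T_{\max})$, together with the usual blow-up alternative; quasi-positivity of the reaction terms gives $\bm u\ge 0$, and, the solution being classical, the conservation laws \eqref{eq:mass_conservation} are exact, so $\norm{u_i(t)}_{L^1(\Omega)}\le M_{ij}$ on $[0,T_{\max})$ for $i\in\{1,2\}$, $j\in\{3,4\}$.

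Next, assuming \eqref{quasi-uniform} holds with $i=1$ (the case $i=2$ being symmetric), I would use that $w:=u_1+u_3\ge 0$ solves
\begin{equation*}
	\partial_t w-\Delta(a\,w)=0 \ \text{ in } \ \Omega_T,\qquad \nabla(a\,w)\cdot\nu=0 \ \text{ on } \ \partial\Omega,\qquad w(\cdot,0)=u_{1,0}+u_{3,0},
\end{equation*}
where $a:=(d_1u_1+d_3u_3)/w$, extended by $d_1$ where $w=0$, takes values in $[\min\{d_1,d_3\},\max\{d_1,d_3\}]$. Hypothesis \eqref{quasi-uniform} is exactly the closeness condition under which the maximal-regularity-based duality estimate (Lemma~\ref{lem_mr}) applies to this scalar equation, producing for every $T<T_{\max}$ the bound $\norm{w}_{L^{p_0}(\Omega_T)}\le C_T$, and hence $\norm{u_1}_{L^{p_0}(\Omega_T)}+\norm{u_3}_{L^{p_0}(\Omega_T)}\le C_T$ with $p_0>\frac{N+2}{2}$ and $C_T$ at most algebraic in $T$. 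Applying the unconditional $L^2$ duality estimate (which needs only $\min\{d_2,d_3\}>0$) to $u_2+u_3$ gives in addition $\norm{u_2}_{L^2(\Omega_T)}\le C_T$.

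With $p_0>\frac{N+2}{2}$ available I would then run the standard bootstrap on the three parabolic equations for $u_1,u_2,u_3$: iterating parabolic $L^p$--$L^q$ smoothing in the Duhamel formula, and using the pointwise estimate $u_4(x,t)\le u_{4,0}(x)+\int_0^t u_1(x,s)u_2(x,s)\,ds$ for the non-diffusive species, the integrability of every $u_i$ can be raised to $L^\infty(\Omega_T)$ for all $T<T_{\max}$, which excludes finite-time blow-up and gives a global classical solution, as in \cite{CDF14,DF15}. To upgrade this to the quantitative claims I would first prove the time-uniform bound \eqref{3D_u3}: from $\partial_t u_3-d_3\Delta u_3\le u_1u_2$ and a heat-semigroup/comparison argument, \eqref{3D_u3} follows once $u_1u_2$ is controlled in $L^{q}(\Omega\times(0,\infty))$ for some $q>\frac{N+2}{2}$ \emph{uniformly in time}. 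For this one has to bring in the dissipative structure of \eqref{eq:sys} on top of the duality bound -- for instance the entropy identity $\int_0^\infty D(\bm u(t))\,dt\le H(\bm u_0|\bm u_\infty)<+\infty$, valid for classical solutions, which yields $\nabla\sqrt{u_i}\in L^2(\Omega\times(0,\infty))$ for $i=1,2,3$ and, together with the uniform $L^1$ mass bounds and Gagliardo--Nirenberg interpolation, the required time-averaged integrability. Once $u_3$ is uniformly bounded, feeding it back into the favourably structured equations for $u_1,u_2$ and the ODE for $u_4$ and running an $L^p$-energy estimate with Gronwall gives the polynomial bound \eqref{3D_u124_Linfty} for an explicit $\mu\ge 0$; the sublinear bound \eqref{3D_u124} then follows, much as in Proposition~\ref{Global1D}, by interpolating the time-uniform $L^1$ bound against \eqref{3D_u124_Linfty} (and, where needed, against the $L^{p_0}$ duality control), which -- since $\frac N2<p_0$ -- genuinely produces an exponent $\alpha<1$.

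The step I expect to be hardest is the last, quantitative one. The improved-duality lemma only delivers bounds on $\Omega_T$ whose constants could a priori grow quickly in $T$, and the non-diffusive equation for $u_4$ can feed such growth back into $u_1,u_2,u_3$; closing this loop so as to get a genuinely \emph{time-uniform} $L^\infty$ bound for $u_3$, and the sharp polynomial and sublinear rates $\mu$ and $\alpha<1$ for the other species, forces one to combine the duality estimate with the mass- and entropy-dissipation structure of the system in a careful way -- this interplay is the technical heart of the proposition.
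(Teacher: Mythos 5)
Your overall architecture (Amann local existence, improved duality for $u_1+u_3$ under \eqref{quasi-uniform}, bootstrap to $L^\infty(\Omega_T)$, then separate work for the time-uniform and sublinear bounds) matches the paper up to the point where the quantitative estimates begin, but the mechanism you propose for the time-uniform bound \eqref{3D_u3} does not work. You want to control $u_1u_2$ in $L^q$ with $q>\frac{N+2}{2}$ uniformly in time by combining the entropy identity $\int_0^\infty D(\bm u(t))\,dt\le H(\bm u_0|\bm u_\infty)$ with the mass bounds and Gagliardo--Nirenberg. But the dissipation only gives $\int_0^\infty\norm{\nabla\sqrt{u_i}(t)}_{L^2(\Omega)}^2dt<\infty$, which together with $\norm{\sqrt{u_i}(t)}_{L^2(\Omega)}^2\le M$ and Sobolev embedding yields at best control of $u_i$ in $L^1_{loc}\big(0,\infty;L^{\frac{N}{N-2}}(\Omega)\big)$ with uniformly bounded local norms. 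Already for $N=3$ this is $L^1_tL^3_x$, whereas the heat-kernel argument needs $u_1u_2$ in $L^q(\Omega_{\tau,\tau+1})$ for some $q>\frac 52$ uniformly in $\tau$, i.e. roughly $u_i\in L^5$ uniformly on unit cylinders; the gap only widens as $N$ grows. The paper closes this gap by an entirely different device: it localizes the \emph{improved duality} estimate itself on overlapping unit time cylinders $\Omega_{\tau,\tau+2}$ with smooth cut-offs, uses the uniform $L^1$ mass bounds through an interpolation to produce a self-improving recursion $a_{n+1}\le \mathcal C a_n^{1-\alpha}$ for $a_n=\norm{u_1+u_3}_{L^{p_0}(\Omega_{n,n+1})}$ (Lemmas \ref{lem:some_L_p_estimaes_for_123} and \ref{elementary}), transfers the resulting uniform bounds to $u_2$ by a second duality argument (Lemma \ref{lem:second_duality}), and only then bootstraps on cylinders to reach \eqref{3D_u3}. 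The entropy plays no role at this stage of the paper; it enters only in Section \ref{sec:proof}.

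Two further points. First, your closing interpolation for \eqref{3D_u124} is not sufficient: interpolating the uniform $L^1$ bound against $\norm{u_4(t)}_{L^\infty(\Omega)}\le C(1+t)^\mu$ gives $\norm{u_4(t)}_{L^{N/2}(\Omega)}\le C(1+t)^{\mu(1-\frac 2N)}$, and there is no reason the $\mu$ produced by the bootstrap satisfies $\mu(1-\frac 2N)<1$; the paper instead exploits the uniform-in-$\tau$ $L^p(\Omega_{\tau,\tau+1})$ bounds on $u_1,u_2$ (so that $\norm{u_1u_2}_{L^p(\Omega_t)}^p$ grows only \emph{linearly} in $t$) together with the Gronwall-type Lemma \ref{Gronwall} and an interpolation against $L^1$ to land on an explicit $\alpha<1$. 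Second, a smaller issue: the unconditional $L^2(\Omega_T)$ duality bound for $u_2$ is too weak to launch the bootstrap when $N\ge 3$ (since $2\le\frac{N+2}{2}$ and you would only get $u_1u_2$ in an $L^r$ with $\frac 1r=\frac 1{p_0}+\frac 12$); the paper transfers the full $L^{p_0}(\Omega_T)$ bound from $u_3$ to $u_2$ via the dual estimate of \cite[Lemma 33.3]{QS07} applied to $\partial_t(u_2+u_3)-\Delta(d_2u_2+d_3u_3)=0$.
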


The rest of this section is organised as follow: in subsection \ref{auxiliary} we provide some auxiliary results which we will use in subsequent subsections. Subsections \ref{3D_proof} and \ref{12D_proof} will be devoted to proving Propositions \ref{Global3D} and \ref{Global1D} respectively.

\medskip
Finally, we recall that we {\it always} denote by $C_T$ a generic constant depending {\it at most algebraically} on the time horizon $T>0$, which can be different from line to line (or even in the same line).

\subsection{Auxiliary results}\label{auxiliary}
We emphasise that all results of this subsection are valid for all $N\geq 1$.
\begin{lemma}\label{lem:polynomial}
	Let $d>0$, $T>0$ and $f\in L^p(\Omega_{T})$ for $1<p<\infty$. Assume, in addition,  that $\norm{f}_{L^p(\Omega_{T})} \leq C_{T}$ and let $u$ be the solution the heat equation
	\begin{equation}\label{heat_equation}
	\begin{cases}
		u_t(x,t) - d\Delta u(x,t) = f, &(x,t)\in \Omega_{T},\\
		\nabla u(x,t) \cdot \nu(x) = 0, &(x,t)\in  \partial\Omega\times(0,T),\\
		u(x,0) = u_0(x), &x\in\Omega,
	\end{cases}
	\end{equation}
with initial datum $u_0\in L^\infty(\Omega)$.
	\begin{itemize}
		\item[(i)] If $p< (N+2)/2$, then 
		\begin{equation*}
			\norm{u}_{L^s(\Omega_{T})} \leq  C_{T} \quad \text{ for all } \quad s < \frac{(N+2)p}{N+2-2p}.
		\end{equation*}
		\item[(ii)] If $p = (N+2)/2$, then
		\begin{equation*}
			\norm{u}_{L^s(\Omega_{T})}   \leq C_{T} \quad \text{ for all } \quad s<+\infty.
		\end{equation*}
		\item[(iii)] If $p > (N+2)/2$, then 
		\begin{equation*}
			\norm{u}_{L^\infty(\Omega_{T})} \leq C_{T}.
		\end{equation*}
	\end{itemize}
\end{lemma}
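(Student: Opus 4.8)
\textbf{Proof proposal for Lemma \ref{lem:polynomial}.}

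The plan is to treat the homogeneous and inhomogeneous parts of the solution separately, write $u = v + w$ where $v$ solves the heat equation with datum $u_0$ and zero source, and $w$ solves the heat equation with zero datum and source $f$. For $v$, the standard maximum principle (or the contractivity of the Neumann heat semigroup on $L^\infty(\Omega)$) gives $\norm{v(t)}_{L^\infty(\Omega)} \leq \norm{u_0}_{L^\infty(\Omega)}$ for all $t \in [0,T]$, so $v \in L^\infty(\Omega_T)$ with a bound independent of $T$; this contributes only a harmless additive constant in every case and can be absorbed into $C_T$. Hence it suffices to prove the three estimates for $w$, i.e. we may assume $u_0 \equiv 0$.

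For $w$, I would use the smoothing properties of the Neumann heat semigroup $\{e^{td\Delta}\}_{t\geq 0}$ through the representation $w(t) = \int_0^t e^{(t-\sigma)d\Delta} f(\sigma)\, d\sigma$, together with the $L^p$–$L^q$ decay estimate $\norm{e^{td\Delta}g}_{L^q(\Omega)} \leq C(1 + (dt)^{-\frac N2(\frac1p - \frac1q)})\norm{g}_{L^p(\Omega)}$ on a bounded domain (the constant depending on $\Omega$ and $N$, and the constant term being present because $|\Omega|<\infty$ so there is no decay at low frequency — but on a finite time horizon this only costs a $C_T$). This reduces everything to a parabolic convolution (Hardy–Littlewood–Sobolev type) computation in the time variable: the kernel $(t-\sigma)^{-\frac N2(\frac1p - \frac1q)}$ is in weak-$L^r$ with $\frac1r = \frac N2(\frac1p-\frac1q)$ on $(0,T)$, and a space-time Young/HLS inequality then yields $\norm{w}_{L^s(\Omega_T)} \leq C_T \norm{f}_{L^p(\Omega_T)}$ whenever the exponents satisfy $\frac1s > \frac1p - \frac2{N+2}$ (the scaling-critical relation, with the strict inequality exactly because on a bounded set/finite horizon the endpoint is not needed and the low-frequency/large-time losses are swept into $C_T$). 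Case (i) is then $\frac1s = \frac1p - \frac2{N+2} + 0^+$, rearranged to $s < \frac{(N+2)p}{N+2-2p}$; case (ii) is the borderline $p = \frac{N+2}{2}$ where $\frac1p - \frac2{N+2} = 0$ so any finite $s$ works; case (iii) is $p > \frac{N+2}{2}$, where $\frac1p - \frac2{N+2} < 0$ so one can take $s = \infty$, giving $w \in L^\infty(\Omega_T)$. An alternative, and probably cleaner for the paper's style, is to invoke $L^p$ maximal regularity for the heat equation to get $w \in W^{2,1}_p(\Omega_T)$ with norm $\leq C_T\norm{f}_{L^p}$, and then apply the anisotropic (parabolic) Sobolev embedding $W^{2,1}_p(\Omega_T) \hookrightarrow L^s(\Omega_T)$ with the same threshold exponent $\frac{(N+2)p}{N+2-2p}$ — this is the route the surrounding text seems to favor, given the later references to maximal regularity.

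I expect the main obstacle to be bookkeeping rather than anything deep: one must (a) justify the $L^s$ end of the parabolic Sobolev embedding with the correct (non-scale-invariant) exponent $(N+2)p/(N+2-2p)$, keeping careful track that strict inequality is what one actually gets and what is actually needed, and (b) handle the Neumann boundary condition — either by citing maximal regularity for the Neumann Laplacian on a $C^2$ (or smoother) domain, or by noting that $\Omega$ bounded with smooth boundary lets one reflect/extend or simply quote the known $W^{2,1}_p$ theory for parabolic equations with conormal boundary conditions. The presence of the nonzero $u_0 \in L^\infty$ is a genuine non-issue once separated off as above. I would also remark that all constants genuinely depend at most algebraically on $T$ — the only $T$-dependence enters through $\norm{1}_{L^r(0,T)}$-type factors and the low-frequency part of the semigroup, both of which are powers of $T$ — which is exactly what the notation $C_T$ in the paper is meant to encode, so no extra care beyond tracking these powers is required.
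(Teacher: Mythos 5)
Your proposal is correct, but it is worth noting that the paper does not actually prove this lemma: it simply cites \cite[Lemma 3.3]{CDF14} for parts (i)--(ii) and \cite[Lemma 4.6]{Tan18} for part (iii). Your second, ``cleaner'' route --- maximal regularity in $W^{(2,1)}_{p}(\Omega_T)$ followed by the anisotropic Sobolev embedding of \cite{LSU68} with threshold $\frac{(N+2)p}{N+2-2p}$ --- is precisely the argument the authors do write out for the sibling Lemma \ref{cor:embedding_for_N}, so it is fully consistent with the paper's methodology; the only additional step here is splitting off the homogeneous part generated by $u_0\in L^\infty(\Omega)$, which you handle correctly via the maximum principle. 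Your first route (the $L^p$--$L^q$ semigroup smoothing bound plus Young's convolution inequality in time) also closes: with $q=s$ the kernel $(t-\sigma)^{-\frac N2(\frac1p-\frac1s)}$ lies in $L^r(0,T)$ with $1+\frac1s=\frac1r+\frac1p$ exactly when $\frac1s>\frac1p-\frac2{N+2}$, which reproduces all three cases (the case $s<p$ being trivial by H\"older on the bounded set $\Omega_T$).
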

\begin{proof}
	The proofs of part (i) and (ii) can be found in \cite[Lemma 3.3]{CDF14}, while part (iii) is proved in \cite[Lemma 4.6]{Tan18}.
\end{proof}
The following lemma is in the same spirit with Lemma \ref{lem:polynomial} but on any time interval $(\tau, T)$, which will be useful in studying the uniform boundedness of solutions.
\begin{lemma}\label{cor:embedding_for_N}
Let $d>0$, $0\leq \tau < T$ and $0\leq \theta \in L^p(\Omega_{\tau,T})$ for some $1<p<\infty$. Let $\psi$ be the solution to 
\begin{equation}\label{heat_equation_bw}
\begin{cases}
\partial_t \psi(x,t) + d\Delta \psi(x,t)  = -\theta(x,t), &(x,t)\in \Omega_{\tau, T},\\
\nabla \psi(x,t) \cdot \nu(x) = 0, &(x,t)\in  \partial\Omega\times(\tau,T),\\
\psi(x,T) = 0, &x\in\Omega.
\end{cases}
\end{equation}
Then
\begin{equation*}
	\psi(x,t)\geq 0 \quad \text{ a.e. in } \quad \Omega_{\tau,T}.
\end{equation*}
Moreover, we have the following estimates.
	\begin{itemize}
		\item[(i)] If $p< (N+2)/2$, then 
		\begin{equation}\label{item:less}
			\norm{\psi}_{L^s(\Omega_{\tau,T})} \leq C(T-\tau,\Omega,d,p,s)\norm{\theta}_{L^p(\Omega_{\tau,T})} \quad \text{ for all } \quad s < \frac{(N+2)p}{N+2-2p}.
		\end{equation}
		\item[(ii)] If $p = (N+2)/2$, then
		\begin{equation}\label{item:equal_N}
			\norm{\psi}_{L^s(\Omega_{\tau,T})} \leq C(T-\tau,\Omega,d,p,s)\norm{\theta}_{L^p(\Omega_{\tau,T})} \quad \text{ for all } \quad s<+\infty.
		\end{equation}
		\item[(iii)]If $p > (N+2)/2$, then 
		\begin{equation}\label{item:greater_N} 
			\norm{\psi}_{L^\infty(\Omega_{\tau,T})} \leq C(T-\tau,\Omega,d,p)\norm{\theta}_{L^p(\Omega_{\tau,T})}.
		\end{equation}
	\end{itemize}
\begin{proof}
	At the first glance, equation \eqref{heat_equation_bw} looks like a backward heat equation. However, defining $\widetilde{\psi}(x,t) = \psi(x,T+\tau-t)$, we see that $\widetilde{\psi}$ solves the usual heat equation
	\begin{equation*}
	\begin{cases}
		\partial_t \widetilde{\psi}(x,t) - d\Delta\widetilde{\psi}(x,t) = \widetilde{\theta}(x,t), &(x,t)\in \Omega_{\tau,T},\\
		\nabla \widetilde{\psi}(x,t)\cdot \nu(x) = 0, &(x,t)\in \partial\Omega\times (\tau,T),\\
		\widetilde{\psi}(x,\tau) = 0, &x\in\Omega,
	\end{cases}
	\end{equation*}
where $\widetilde{\theta}(x,t)=\theta(x,T+\tau-t)$. Since $\theta \geq 0$, the non-negativity of $\widetilde{\psi}$, and hence of $\psi$, follows from the maximum principle. By the maximal regularity principle for the heat equation, see \cite{LSU68}, we have
	\begin{equation*}
		\norm{\widetilde{\psi}}_{W_{p,\Omega_{\tau,T}}^{(2,1)}} \leq C(T-\tau,\Omega,d,p)\norm{\widetilde{\theta}}_{L^p(\Omega_{\tau,T})},
	\end{equation*}
	where $W_{p,\Omega_{\tau,T}}^{(2,1)}$ is the Banach space equipped with the norm
	\begin{equation*}
		\norm{\xi}_{W_{p,\Omega_{\tau,T}}^{(2,1)}} = \sum_{2r+|\beta| \leq 2}\norm{\partial_t^{r}\partial_{x}^{\beta}\xi}_{L^p(\Omega_{\tau,T})}.
	\end{equation*}
	Using the embedding
	\begin{equation*}
		W_{p,\Omega_{\tau,T}}^{(2,1)} \hookrightarrow L^s(\Omega_{\tau,T})
	\end{equation*}
	from  \cite{LSU68}, where 
	$$1\leq s < \frac{(N+2)p}{N+2-2p} \quad\text{if} \quad  p<\frac{N+2}{2},$$
	$$ 1\leq s < \infty\quad\text{ arbitrary if}\quad p = \frac{N+2}{2},$$
	and
	$$s=\infty \quad \text{if}\quad p>\frac{N+2}{2},$$ 
	we obtain the desired estimates for $\psi$.\\
\end{proof}
\end{lemma}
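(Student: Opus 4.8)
The plan is to strip \eqref{heat_equation_bw} of its apparent backward-in-time character by a time reflection, after which everything reduces to classical facts about the forward heat equation with a non-negative source. First I would set $\wt\psi(x,t) := \psi(x,\,T+\tau-t)$ for $t\in[\tau,T]$; since $\partial_t\wt\psi(x,t) = -(\partial_t\psi)(x,\,T+\tau-t)$, the function $\wt\psi$ solves the genuine heat equation $\partial_t\wt\psi - d\Delta\wt\psi = \wt\theta$ on $\Omega_{\tau,T}$ with homogeneous Neumann boundary conditions, zero initial datum $\wt\psi(\cdot,\tau) = \psi(\cdot,T) = 0$, and source $\wt\theta(x,t) = \theta(x,\,T+\tau-t)\geq 0$. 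Because $t\mapsto T+\tau-t$ is a measure-preserving involution of $(\tau,T)$, one has $\norm{\wt\psi}_{L^s(\Omega_{\tau,T})} = \norm{\psi}_{L^s(\Omega_{\tau,T})}$ and $\norm{\wt\theta}_{L^p(\Omega_{\tau,T})} = \norm{\theta}_{L^p(\Omega_{\tau,T})}$, so it is enough to establish the stated estimates for $\wt\psi$ in terms of $\wt\theta$.

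For the non-negativity claim I would use Duhamel's formula $\wt\psi(\cdot,t) = \int_\tau^t e^{(t-s)d\Delta_{\mathcal N}}\wt\theta(\cdot,s)\,ds$, where $e^{rd\Delta_{\mathcal N}}$ denotes the Neumann heat semigroup on $\Omega$. This semigroup is positivity preserving and $\wt\theta\geq 0$, so $\wt\psi\geq 0$, and hence $\psi\geq 0$ a.e. in $\Omega_{\tau,T}$. (Equivalently, one may first mollify $\wt\theta$, apply the classical parabolic maximum principle, and then pass to the limit.)

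For the quantitative bounds I would invoke maximal $L^p$-regularity for the heat equation with zero initial data on the cylinder $\Omega_{\tau,T}$: there is a constant $C(T-\tau,\Omega,d,p)$ --- depending on the \emph{length} of the time interval only, by time-translation invariance --- such that $\norm{\wt\psi}_{W^{(2,1)}_p(\Omega_{\tau,T})} \leq C(T-\tau,\Omega,d,p)\norm{\wt\theta}_{L^p(\Omega_{\tau,T})}$, where $W^{(2,1)}_p$ is the usual parabolic Sobolev space with norm $\sum_{2r+|\beta|\leq 2}\norm{\partial_t^r\partial_x^\beta\cdot}_{L^p}$. I would then apply the parabolic Sobolev embedding $W^{(2,1)}_p(\Omega_{\tau,T})\hookrightarrow L^s(\Omega_{\tau,T})$, valid for $s<\frac{(N+2)p}{N+2-2p}$ if $p<\frac{N+2}{2}$, for every finite $s$ if $p=\frac{N+2}{2}$, and for $s=\infty$ if $p>\frac{N+2}{2}$; both the maximal regularity estimate and the embedding are standard and can be taken from \cite{LSU68}. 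Chaining the two inequalities and using $\norm{\psi}_{L^s} = \norm{\wt\psi}_{L^s}$ yields exactly \eqref{item:less}, \eqref{item:equal_N}, and \eqref{item:greater_N}.

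I do not expect a genuine difficulty here --- the proof is essentially bookkeeping on top of cited results. The two points that deserve attention are: (a) verifying that the source sign flips correctly under the reflection, so that the reflected problem really is a \emph{forward} equation with the \emph{non-negative} source $\wt\theta$, which is what simultaneously yields positivity and the correct functional-analytic setting; and (b) recording that the maximal-regularity constant depends on $T-\tau$ rather than on $\tau$ and $T$ separately, since later applications of this lemma on sliding windows of the form $(\tau,\tau+1)$ rely on a constant that is uniform in $\tau$.
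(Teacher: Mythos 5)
Your proposal is correct and follows essentially the same route as the paper: the time reflection $\wt\psi(x,t)=\psi(x,T+\tau-t)$ turning the problem into a forward heat equation with non-negative source, the maximum principle (or equivalently the positivity-preserving semigroup) for the sign, and maximal $L^p$-regularity combined with the parabolic Sobolev embedding from \cite{LSU68} for the quantitative bounds. Your remark (b) about the constant depending only on $T-\tau$ is exactly the point the paper also relies on for the sliding-window arguments.
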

The following lemmas are necessary for the improved duality method.
\begin{lemma}\label{lem_mr}
	Let $0\leq \tau < T$ and $\theta \in L^p(\Omega_{\tau,T})$ for some $1<p<\infty$. Let $\psi$ be the solution to the equation
\begin{equation}\label{normalized_diffusion}
		\begin{cases}
			\partial_t \psi(x,t) + \Delta \psi(x,t)= -\theta(x,t), &(x,t)\in\Omega_{\tau,T},\\
			\nabla\psi(x,t) \cdot \nu(x) = 0, &(x,t)\in \partial\Omega\times(\tau,T),\\
			\psi(x,T) = 0, &x\in\Omega.
		\end{cases}
	\end{equation}

	Then there exists an optimal constant $C_{\mathrm{mr},p}$ depending only on $p$, the domain $\Omega$, and the dimension $N$ such that the following \emph{maximal regularity} holds
	\begin{equation}\label{Cmr}
		\norm{\Delta \psi}_{L^p(\Omega_{\tau,T})} \leq C_{\mathrm{mr},p}\norm{\theta}_{L^p(\Omega_{\tau,T})}.
	\end{equation}
\end{lemma}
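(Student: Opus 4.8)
The plan is to undo the time reversal built into \eqref{normalized_diffusion}, reducing it to a forward heat equation with vanishing initial data, and then to invoke the classical maximal $L^p$-regularity for the Neumann heat semigroup, taking some care so that the resulting constant does not depend on the time window $(\tau,T)$. First I would set $\wt\psi(x,t) = \psi(x,T+\tau-t)$ and $\wt\theta(x,t)=\theta(x,T+\tau-t)$, exactly as in the proof of Lemma \ref{cor:embedding_for_N}, so that $\wt\psi$ solves
\begin{equation*}
\begin{cases}
\partial_t\wt\psi - \Delta\wt\psi = \wt\theta, & (x,t)\in\Omega_{\tau,T},\\
\nabla\wt\psi\cdot\nu = 0, & (x,t)\in\partial\Omega\times(\tau,T),\\
\wt\psi(\cdot,\tau) = 0,
\end{cases}
\end{equation*}
with $\norm{\Delta\wt\psi}_{L^p(\Omega_{\tau,T})} = \norm{\Delta\psi}_{L^p(\Omega_{\tau,T})}$ and $\norm{\wt\theta}_{L^p(\Omega_{\tau,T})} = \norm{\theta}_{L^p(\Omega_{\tau,T})}$. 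By translation in time I may take $\tau=0$, and after extending $\wt\theta$ by zero to $\Omega\times(0,\infty)$ the solution of the heat equation on $(0,\infty)$ restricts on $(0,T)$ to $\wt\psi$ (causality of the heat flow), so it is enough to bound $\norm{\Delta\wt\psi}_{L^p(\Omega\times(0,\infty))}$ by $\norm{\wt\theta}_{L^p(\Omega\times(0,\infty))}$ with a constant depending only on $\Omega$, $N$ and $p$.

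The main obstacle is precisely this uniformity in the interval: the Neumann Laplacian $\Delta$ on $L^p(\Omega)$ is not invertible — its kernel is the constants — so one cannot quote half-line maximal regularity for it directly, and the finite-interval constant from \cite{LSU68} (as used in Lemma \ref{cor:embedding_for_N}) genuinely depends on $T-\tau$. The resolution I would use is to split $L^p(\Omega) = \br{\text{constants}}\oplus L^p_0(\Omega)$, with $L^p_0(\Omega) := \br{f : \int_\Omega f = 0}$ and the bounded projection $f\mapsto\overline f$. Averaging the equation gives $\partial_t\overline{\wt\psi} = \overline{\wt\theta}$ (using $\overline{\Delta\wt\psi}=0$ by the Neumann condition), so $\wt\psi - \overline{\wt\psi}$ solves the same equation with forcing $\wt\theta - \overline{\wt\theta}\in L^p_0$ and zero data, and $\Delta\wt\psi = \Delta(\wt\psi-\overline{\wt\psi})$ only ever sees this mean-zero part. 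On $L^p_0(\Omega)$ the Neumann Laplacian is a negative, invertible, sectorial operator generating an exponentially decaying analytic semigroup, and on a smooth bounded domain it admits a bounded $H^\infty$-functional calculus, hence has maximal $L^p$-regularity on $(0,\infty)$; this is classical (see \cite{LSU68} for the finite-interval version, and the operator-theoretic half-line formulation in the works of Lamberton and of Denk--Hieber--Pr\"uss — or, more concretely, one can apply Calder\'on--Zygmund theory to the singular integral operator $g\mapsto\int_0^t\Delta e^{(t-s)\Delta}g\,ds$ using Gaussian bounds for the Neumann heat kernel).

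This yields $\norm{\Delta\wt\psi}_{L^p(\Omega\times(0,\infty))}\le C_0(\Omega,N,p)\,\norm{\wt\theta}_{L^p(\Omega\times(0,\infty))}$, and restricting back to $(0,T)$ and reversing time proves the estimate \eqref{Cmr} with a constant depending only on $p$, $\Omega$ and $N$. Finally, I would define $C_{\mathrm{mr},p}$ as the infimum of all constants for which \eqref{Cmr} holds for every $0\le\tau<T$ and every $\theta\in L^p(\Omega_{\tau,T})$; since the inequality persists in the limit along a decreasing sequence of admissible constants, this infimum is itself admissible, and hence it is the optimal constant.
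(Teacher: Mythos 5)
Your proof is correct and follows essentially the same route as the paper: the paper's proof consists of the identical time-reversal reduction to a forward Neumann heat equation with zero initial data, followed by a citation of Lamberton's maximal $L^p$-regularity theorem \cite[Theorem 1]{Lam87}, whose constant is already independent of the length of the time interval for contraction semigroups. Your extra decomposition into constants plus mean-zero functions is a careful (and valid) way to justify that interval-independence, though it is not strictly needed here, since the estimate \eqref{Cmr} only bounds $\norm{\Delta\psi}$ — which annihilates the constant mode automatically — and Lamberton's half-line result applies without invertibility of the generator.
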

\begin{proof}
	The proof for the regular heat equation can be found in \cite[Theorem 1]{Lam87}, and in the same manner we have shown the validity of the previous lemma (by defining $\widetilde{\psi}$), the result follows.
\end{proof}
\begin{lemma}\label{lem_mr_rescaled}
	Let $d>0$, $0\leq \tau < T$ and $\theta \in L^p(\Omega_{\tau,T})$ for some $1<p<\infty$. Let $\psi$ be the solution to \eqref{heat_equation}. Then
	\begin{equation}\label{b5}
		\norm{\Delta\psi}_{L^p(\Omega_{\tau,T})} \leq \frac{C_{\mathrm{mr},p}}{d}\norm{\theta}_{L^p(\Omega_{\tau,T})}
	\end{equation}
	where $C_{\mathrm{mr},p}$ is the constant in \eqref{Cmr}.
\end{lemma}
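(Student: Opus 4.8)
The plan is to reduce \eqref{b5} to the normalized maximal regularity estimate of Lemma \ref{lem_mr} by a parabolic time-rescaling, using crucially the fact that the constant $C_{\mathrm{mr},p}$ there depends only on $p$, $\Omega$ and $N$, and \emph{not} on the length of the time interval.

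First I would introduce the rescaled functions
\begin{equation*}
\wt\psi(x,s) := \psi\pa{x, s/d}, \qquad \wt\theta(x,s) := \frac1d\,\theta\pa{x, s/d}, \qquad (x,s)\in \Omega_{d\tau,\,dT}.
\end{equation*}
Since $\partial_s\wt\psi(x,s) = d^{-1}(\partial_t\psi)(x,s/d)$ and $(\Delta\wt\psi)(x,s) = (\Delta\psi)(x,s/d)$, plugging $t=s/d$ into the equation for $\psi$ shows that $\wt\psi$ solves the normalized problem \eqref{normalized_diffusion} on $\Omega_{d\tau,dT}$ with source $\wt\theta$, homogeneous Neumann boundary condition inherited from $\psi$, and terminal datum $\wt\psi(\cdot,dT) = \psi(\cdot,T) = 0$. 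Applying Lemma \ref{lem_mr} on the time interval $(d\tau,dT)$ — which is legitimate exactly because $C_{\mathrm{mr},p}$ is interval-independent — yields
\begin{equation*}
\norm{\Delta\wt\psi}_{L^p(\Omega_{d\tau,dT})} \leq C_{\mathrm{mr},p}\,\norm{\wt\theta}_{L^p(\Omega_{d\tau,dT})}.
\end{equation*}

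It then remains to translate this back via the substitution $t = s/d$, $ds = d\,dt$, under which $\norm{\Delta\wt\psi}_{L^p(\Omega_{d\tau,dT})}^p = d\,\norm{\Delta\psi}_{L^p(\Omega_{\tau,T})}^p$ and $\norm{\wt\theta}_{L^p(\Omega_{d\tau,dT})}^p = d^{1-p}\,\norm{\theta}_{L^p(\Omega_{\tau,T})}^p$. Substituting these into the displayed inequality and taking $p$-th roots gives $d^{1/p}\norm{\Delta\psi}_{L^p(\Omega_{\tau,T})} \leq C_{\mathrm{mr},p}\,d^{(1-p)/p}\norm{\theta}_{L^p(\Omega_{\tau,T})}$, and dividing through by $d^{1/p}$ produces exactly \eqref{b5}. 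There is no real obstacle here beyond bookkeeping: the only points that deserve attention are verifying that the factor $d$ in front of $\Delta\psi$ is absorbed so that the rescaled source acquires precisely one factor $d^{-1}$, and noting that rescaling \emph{time} (rather than space, which would distort $\Omega$ and hence the constant) is what keeps $C_{\mathrm{mr},p}$ untouched.
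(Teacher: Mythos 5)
Your proof is correct and follows essentially the same route as the paper: rescale time by $t\mapsto t/d$ to reduce to the normalized maximal regularity estimate of Lemma \ref{lem_mr}, then undo the change of variables in the $L^p$ norms to collect the factor $d^{-1}$. The bookkeeping of the powers of $d$ matches the paper's computation exactly.
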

\begin{proof}
	Define $\wh{\psi}(x,t) = \psi(x,t/d)$ we have
\begin{equation*}
		\begin{cases}
			\partial_t\wh{\psi}(x,t) - \Delta \psi(x,t) = \frac 1d\wh{\theta}(x,t), &(x,t)\in \Omega_{d\tau, dT},\\
			\nabla \wh{\psi}(x,t) \cdot \nu (x)= 0, &(x,t)\in \partial \Omega\times (d\tau, dT),\\
			\wh{\psi}(x,d\tau) = 0, &x\in\Omega,
		\end{cases}
	\end{equation*}
	where $\wh{\theta}(x,t) = \theta(x,t/d)$. From Lemma \ref{lem_mr}, we have 
	\begin{equation*}
		\|\Delta\wh{\psi}\|_{L^p(\Omega_{d\tau, dT})} \leq \frac{C_{\rm{mr},p}}{d}\|\wh{\theta}\|_{L^p(\Omega_{d\tau, dT})}
	\end{equation*}
	or equivalently
	\begin{equation*}
	\int_{d\tau}^{dT}\int_{\Omega}|\Delta\wh{\psi}(x,t)|^pdxdt \leq \pa{\frac{C_{\rm{mr},p}}{d}}^p\int_{d\tau}^{dT}\int_{\Omega}|\wh{\theta}(x,t)|^pdxdt
	\end{equation*}
	Making the change of variable $s = t/d$ yields
	\begin{equation*}
		d\|\Delta \psi\|_{L^p(\Omega_{\tau,T})}^p \leq \frac{C_{\rm{mr},p}^p}{d^{p-1}}\|\theta\|_{L^p(\Omega_{\tau,T})}^p,
	\end{equation*}
	which is \eqref{b5}.
\end{proof}
\begin{lemma}\label{elementary}
	Let $\{a_n\}_{n\in \mathbb N}$ be a non-negative sequence. Define $\mathfrak N = \{n\in \mathbb N: \; a_{n-1} \leq a_n \}$. If there exists a constant $C$ independent of $n$ such that
	\begin{equation*}
		a_n \leq C \quad \text{ for all } \quad n \in \mathfrak N,
	\end{equation*}
	then
	\begin{equation*}
		a_n \leq \max\{a_0; C\} \quad \text{ for all } \quad n \in \mathbb N.
	\end{equation*}
\end{lemma}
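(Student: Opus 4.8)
The plan is to prove Lemma~\ref{elementary} by a straightforward induction on $n$, exploiting the dichotomy that every index either belongs to $\mathfrak{N}$ (where the hypothesised bound $C$ applies directly) or is the endpoint of a strictly decreasing step, so that the sequence can only ``grow back'' at indices in $\mathfrak{N}$.

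Concretely, set $M := \max\{a_0; C\}$ and proceed by induction. The base case $n = 0$ is immediate, since $a_0 \le M$. For the inductive step, assume $a_{n-1} \le M$ and consider two cases according to whether $n \in \mathfrak{N}$. If $n \in \mathfrak{N}$, then by hypothesis $a_n \le C \le M$. If $n \notin \mathfrak{N}$, then by the very definition of $\mathfrak{N}$ we have $a_{n-1} > a_n$, whence $a_n < a_{n-1} \le M$ by the induction hypothesis. In either case $a_n \le M$, which closes the induction.

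An equivalent, perhaps more transparent, formulation I would give is the following: fix $n$ and let $m$ be the largest index with $0 \le m \le n$ such that either $m = 0$ or $m \in \mathfrak{N}$ (such an $m$ exists because $0$ is always a candidate). By maximality, none of the indices $m+1, \dots, n$ lie in $\mathfrak{N}$, hence $a_k < a_{k-1}$ for each such $k$, and telescoping these inequalities gives $a_n \le a_m$. If $m = 0$ this yields $a_n \le a_0 \le M$; if $m \in \mathfrak{N}$ the hypothesis gives $a_m \le C \le M$, so again $a_n \le M$.

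There is no genuine obstacle here: the statement is elementary, and the only point requiring a modicum of care is the treatment of the index $0$, which by convention is never forced into $\mathfrak{N}$ and therefore must be accommodated separately via the term $a_0$ in $\max\{a_0; C\}$. The lemma will later be applied to sequences of the form $a_n = \sup_{t\in[n,n+1]}\norm{u_i(t)}$, where the hypothesis ``$a_n \le C$ on $\mathfrak{N}$'' is verified only on those time windows over which the relevant norm has increased relative to the previous window; the conclusion then upgrades this to a uniform-in-$n$, hence uniform-in-time, bound.
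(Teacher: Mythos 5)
Your induction is correct, and both of your formulations handle the only delicate point (the index $0$) properly. The paper itself omits the proof, declaring it straightforward, and your argument is precisely the intended elementary one.
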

\begin{proof}
	The proof of this lemma is straightforward. We merely mentioned it explicitly as we will use it several times.
\end{proof}
\begin{lemma}[A Gronwall-type inequality]\label{Gronwall}
Assume that $y(t)$ is a non-negative function that satisfies
\begin{equation*}
	y'(t) \leq \beta(t) + \kappa y(t)^{1-r}, \quad \forall t> 0,
\end{equation*}
for $\kappa>0$, $r\in (0,1)$, and a non-negative function $\beta(t)$. Then
\begin{equation*}
y(t) \leq C\rpa{y(0) + \int_0^t\beta(s)ds + t^{\frac 1r}}, \quad \forall t>0,
\end{equation*}
where the constant $C$ depends only on $\kappa$ and $r$.
\end{lemma}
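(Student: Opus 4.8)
The plan is to reduce the differential inequality $y'(t) \le \beta(t) + \kappa y(t)^{1-r}$ to two regimes according to the size of $y(t)$, and to handle the nonlinear term $\kappa y^{1-r}$ by the elementary bound $y^{1-r} \le 1 + y$ only when $y$ is large, while treating the sublinear behaviour directly when $y$ is small. Concretely, I would first observe that for every $s \ge 0$ one has $y(s)^{1-r} \le 1 + y(s)$, since $0 < 1-r < 1$. Feeding this into the hypothesis gives $y'(s) \le \beta(s) + \kappa + \kappa y(s)$, and a direct application of the classical (linear) Gronwall inequality yields $y(t) \le e^{\kappa t}\bigl(y(0) + \int_0^t(\beta(s)+\kappa)\,ds\bigr)$. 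This already gives a bound of the asserted shape but with an exponential factor $e^{\kappa t}$ rather than the claimed $t^{1/r}$ growth, so this crude step is not enough and the point of the lemma is to exploit the sublinearity to get a polynomial, not exponential, bound.

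To obtain the sharper $t^{1/r}$ bound, the idea is to separate the contribution of $\beta$ from the contribution of the nonlinear term by linearity of the comparison. Set $z(t) = y(0) + \int_0^t \beta(s)\,ds$, which is nondecreasing, and write $y(t) = z(t) + w(t)$ where $w$ absorbs the $\kappa y^{1-r}$ effect; one checks $w(0)=0$ and $w'(t) \le \kappa y(t)^{1-r} \le \kappa\,(z(t)+w(t))^{1-r}$. The key sub-step is then the scalar comparison lemma: if $w \ge 0$, $w(0)=0$ and $w'(t) \le \kappa (a + w(t))^{1-r}$ for a constant $a \ge 0$, then $w(t) \le C_{\kappa,r}\,(a + t^{1/r})$. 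This is proved by comparison with the solution of the ODE $\varphi' = \kappa(a+\varphi)^{1-r}$, $\varphi(0)=0$, which can be integrated explicitly: $(a+\varphi)^{r} - a^{r} = \kappa r\, t$, hence $a + \varphi(t) = (a^{r} + \kappa r t)^{1/r} \le C_{\kappa,r}(a + t^{1/r})$ by the elementary inequality $(p+q)^{1/r} \le 2^{1/r - 1}(p^{1/r} + q^{1/r})$ applied to $p = a^r$, $q = \kappa r t$. To apply this with $a$ constant I would use the monotonicity of $z$: on any interval $[0,t]$ one has $z(s) \le z(t)$, so $w'(s) \le \kappa(z(t) + w(s))^{1-r}$ with the frozen constant $a = z(t)$, giving $w(t) \le C_{\kappa,r}(z(t) + t^{1/r})$. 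Combining, $y(t) = z(t) + w(t) \le C_{\kappa,r}\bigl(z(t) + t^{1/r}\bigr) = C_{\kappa,r}\bigl(y(0) + \int_0^t \beta(s)\,ds + t^{1/r}\bigr)$, which is exactly the claim with $C = C_{\kappa,r}$ depending only on $\kappa$ and $r$.

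The main obstacle I anticipate is making the comparison-principle step rigorous given that $y$ is only assumed differentiable (or absolutely continuous) and satisfies a differential \emph{inequality} with a non-Lipschitz right-hand side $\varphi \mapsto (a+\varphi)^{1-r}$ near points where the argument vanishes; the usual Picard–Lindelöf uniqueness fails for such nonlinearities. The standard fix is to compare instead against $\varphi_\varepsilon$ solving $\varphi_\varepsilon' = \kappa(a + \varepsilon + \varphi_\varepsilon)^{1-r}$ with $\varphi_\varepsilon(0) = \varepsilon$ for $\varepsilon>0$ — this has a Lipschitz right-hand side on the relevant region, so a clean comparison lemma applies to show $w(t) \le \varphi_\varepsilon(t)$ — and then let $\varepsilon \to 0$, using continuity of the explicit solution formula in $\varepsilon$. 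A second, minor technical point is ensuring the freezing argument $z(s) \le z(t)$ is used only to bound the right-hand side and not the unknown itself, so that the resulting estimate for $w(t)$ is genuinely valid at the endpoint $t$; since $z$ is nondecreasing this is immediate. Everything else is a routine combination of the explicit ODE integration and elementary inequalities, so I would keep those computations brief in the write-up.
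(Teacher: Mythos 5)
Your proposal is correct, but it takes a genuinely different route from the paper. The paper's proof linearizes the nonlinearity via Young's inequality, $y^{1-r}\leq \delta y + r\left(\frac{1-r}{\delta}\right)^{\frac{1-r}{r}}$, applies the standard integrating-factor Gronwall lemma to get $y(t)\leq y(0)e^{\kappa\delta t}+\int_0^t\gamma_\delta(s)e^{\kappa\delta(t-s)}ds$, and only then kills the exponential by the endpoint-dependent choice $\delta=(\kappa t)^{-1}$, which turns $e^{\kappa\delta t}$ into $e$ and produces the $t^{1/r}$ term from $t\cdot t^{(1-r)/r}$. You instead peel off $z(t)=y(0)+\int_0^t\beta$, freeze it at the endpoint using its monotonicity, and compare $w=y-z$ with the explicitly integrable sublinear ODE $\varphi'=\kappa(a+\varphi)^{1-r}$, whose solution $(a+\varphi)^r=a^r+\kappa r t$ exhibits the $t^{1/r}$ growth directly. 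Your route is more revealing about \emph{why} the exponent $1/r$ appears (it is the exact growth rate of the autonomous sublinear equation) and yields an essentially sharp constant, but it costs you the comparison principle for a non-Lipschitz right-hand side, which you correctly handle by the $\varepsilon$-regularization $\varphi_\varepsilon'=\kappa(a+\varepsilon+\varphi_\varepsilon)^{1-r}$ and a limiting argument; the paper's proof avoids that machinery entirely by never leaving the linear Gronwall framework. Both arguments share the same essential trick of letting an auxiliary parameter ($\delta$, respectively the frozen constant $a=z(t)$) depend on the terminal time. One small point worth making explicit in a write-up: in your decomposition you do not need $w\geq 0$, only that $a+w(s)=z(t)-z(s)+y(s)\geq 0$ on $[0,t]$, which holds since $z$ is nondecreasing and $y\geq 0$; this keeps $(a+w)^{1-r}$ well defined throughout the comparison.
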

\begin{proof}
 Using Young's inequality, we find that for \emph{any} given $\delta>0$,
	\begin{equation*}
		y^{1-r} \leq \delta y + r\pa{\frac{1-r}{\delta}}^{\frac{1-r}{r}}.
	\end{equation*}
It then follows that
	\begin{equation*}
		y' \leq \kappa\delta y + \beta(t) + \kappa r\pa{\frac{1-r}{\delta}}^{\frac{1-r}{r}} =: \kappa\delta y + \gamma_{\delta}(t).
	\end{equation*}
which implies that,
	\begin{equation*}
		y(t) \leq y(0)e^{\kappa\delta t} + \int_0^t\gamma_{\delta}(s)e^{\kappa\delta(t-s)}ds.
	\end{equation*}
	Choosing $\delta = (\kappa t)^{-1}$ we obtain
	\begin{align*}
		y(t) &\leq ey(0) + e\int_0^t\pa{\beta(s) + \kappa^{\frac 1r}r\pa{1-r}^{\frac{1-r}{r}}t^{^{\frac{1-r}{r}}}}ds\\
		&\leq ey(0) + e\int_0^t\beta(s)ds + e\kappa^{\frac 1r}r(1-r)^{\frac{1-r}{r}}t^{\frac 1r}\\
		&\leq \max\br{e,e\kappa^{\frac 1r}r(1-r)^{\frac{1-r}{r}}}\pa{y(0) + \int_0^t\beta(s)ds + t^{\frac 1r}}
	\end{align*}
	which finishes the proof.
\end{proof}
\subsection{Proof of Proposition \ref{Global3D}}\label{3D_proof}
The proof of Proposition \ref{Global3D} is quite involved. We shall show the existence and uniqueness of a classical solution, together with \eqref{3D_u124_Linfty}, in Proposition \ref{global3D}. \eqref{3D_u3} will be shown in Lemma \ref{lem:L_infty_bound_on_u_3}, and Lemma \ref{3D_u4LN2} will prove \eqref{3D_u124}.\\
The first step in achieving a proof is the following improved duality lemma.
\begin{lemma}\label{lem_b1}
	Assume that condition \eqref{quasi-uniform} is satisfied. Then, any classical solution to \eqref{eq:sys} on $\Omega_{T}$ satisfies
	\begin{equation}\label{u123_Lp}
				\|u_i\|_{L^{p_0}(\Omega_T)} \leq C_T \quad \text{ for } \quad i=1,2,3,
		\end{equation}
	where $p_0 = \frac{p_0'}{p_0'-1}$ with $p_0'$ is as in \eqref{quasi-uniform}, and the constant $C_T$ depends on the initial data $\br{u_{i,0}}_{i=1,\dots,3}$ and grows at most algebraically in $T$.
\end{lemma}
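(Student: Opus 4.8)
The goal is an $L^{p_0}$ bound on $u_1,u_2,u_3$ via an improved (``dual'') duality estimate that exploits the near-equality of $d_3$ with $d_1$ or $d_2$. Assume, without loss of generality, that the closeness condition \eqref{quasi-uniform} holds for $i=1$, i.e. $\frac{|d_1-d_3|}{d_1+d_3}<\frac{1}{C_{\mathrm{mr},p_0'}}$. The first step is to introduce the combined variable $w = u_1 + u_3$. Adding the first and third equations of \eqref{eq:sys} and writing $d := \tfrac{d_1+d_3}{2}$, one gets
\begin{equation*}
\partial_t w - d\,\Delta w = (d_1-d)\Delta u_1 + (d_3-d)\Delta u_3 = \tfrac{d_1-d_3}{2}\,\Delta(u_1 - u_3),
\end{equation*}
since the reaction terms in the equations for $u_1$ and $u_3$ cancel exactly. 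Thus $w$ solves an inhomogeneous heat equation with right-hand side $f := \tfrac{d_1-d_3}{2}\Delta(u_1-u_3)$, homogeneous Neumann boundary data, and bounded (in $L^\infty$, hence all $L^p$) initial datum. The quantity we actually want to control is $\|w\|_{L^{p_0}(\Omega_T)}$, since $0\le u_1,u_3\le w$ pointwise by non-negativity, which would immediately give \eqref{u123_Lp} for $i=1,3$; the bound for $u_2$ then follows because $\partial_t(u_2-u_1)=d_2\Delta u_2 - d_1\Delta u_1$, so $u_2 \le u_1 + (\text{a heat-regularised term})$, or more directly by comparing $u_1+u_2$'s evolution, letting us recover $u_2$ from $u_1,u_3$ and the conservation-law structure.

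\textbf{The duality argument.} To bound $\|w\|_{L^{p_0}(\Omega_T)}$, test the equation for $w$ against the solution $\psi\ge 0$ of the backward problem \eqref{heat_equation} with diffusion coefficient $d$ and source $\theta \in L^{p_0'}(\Omega_T)$, $\theta\ge 0$, $\|\theta\|_{L^{p_0'}(\Omega_T)}\le 1$; here $p_0' = \frac{p_0}{p_0-1}$ is the exponent from \eqref{quasi-uniform}. Integration by parts and the Neumann conditions give
\begin{equation*}
\iint_{\Omega_T} w\,\theta \,dx\,dt = \int_\Omega w(0)\psi(0)\,dx + \iint_{\Omega_T} f\,\psi\,dx\,dt
= \int_\Omega w(0)\psi(0)\,dx + \tfrac{d_1-d_3}{2}\iint_{\Omega_T}(u_1-u_3)\Delta\psi\,dx\,dt.
\end{equation*}
The boundary-value term $\int_\Omega w(0)\psi(0)$ is bounded by $\|w(0)\|_{L^{p_0}}\|\psi(0)\|_{L^{p_0'}}$, and $\|\psi\|_{L^{p_0'}}$ (hence $\|\psi(0)\|_{L^{p_0'}}$, with possibly a factor $C_T$) is controlled by $\|\theta\|_{L^{p_0'}}\le 1$ using Lemma \ref{cor:embedding_for_N} — this is where an at-most-algebraic-in-$T$ constant enters. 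For the second term, Hölder gives $\bigl|\tfrac{d_1-d_3}{2}\iint(u_1-u_3)\Delta\psi\bigr| \le \tfrac{|d_1-d_3|}{2}\|u_1-u_3\|_{L^{p_0}(\Omega_T)}\|\Delta\psi\|_{L^{p_0'}(\Omega_T)}$, and by the rescaled maximal regularity Lemma \ref{lem_mr_rescaled}, $\|\Delta\psi\|_{L^{p_0'}(\Omega_T)} \le \frac{C_{\mathrm{mr},p_0'}}{d}\|\theta\|_{L^{p_0'}(\Omega_T)} \le \frac{C_{\mathrm{mr},p_0'}}{d}$. Since $\|u_1-u_3\|_{L^{p_0}} \le \|u_1\|_{L^{p_0}}+\|u_3\|_{L^{p_0}} \le 2\|w\|_{L^{p_0}}$ (again by $0\le u_1,u_3\le w$), and $\frac{|d_1-d_3|}{2d} = \frac{|d_1-d_3|}{d_1+d_3}$, we obtain, after taking the supremum over admissible $\theta$ (which computes $\|w\|_{L^{p_0}(\Omega_T)}$ by duality),
\begin{equation*}
\|w\|_{L^{p_0}(\Omega_T)} \le C_T + \frac{|d_1-d_3|}{d_1+d_3}\,C_{\mathrm{mr},p_0'}\cdot 2\|w\|_{L^{p_0}(\Omega_T)}.
\end{equation*}

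\textbf{Closing the estimate and the obstacle.} Here is the crux: condition \eqref{quasi-uniform} guarantees $\frac{|d_1-d_3|}{d_1+d_3}C_{\mathrm{mr},p_0'}<1$, but the factor $2$ above would spoil absorption. This forces a more careful bookkeeping — one must avoid the lossy step $\|u_1-u_3\|_{L^{p_0}}\le 2\|w\|_{L^{p_0}}$. The right fix is to test not against a generic $\theta$ but to estimate $\|u_1-u_3\|_{L^{p_0}}$ more cleverly, or to run the duality on a rescaled pair so that the ``amplification'' constant really is $\frac{|d_1-d_3|}{d_1+d_3}C_{\mathrm{mr},p_0'}$ with no extra factor; this is exactly the improvement in the ``improved duality method'' and is the main technical obstacle. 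Concretely, I expect one bounds $\|u_1-u_3\|_{L^{p_0}(\Omega_T)}$ directly by duality against the same backward equation — $\iint (u_1-u_3)\theta = \int(u_{1,0}-u_{3,0})\psi(0) + \iint [(d_1\Delta u_1 - d_3\Delta u_3) - d\Delta(u_1-u_3)]\psi + (\text{reaction terms})\cdot 2\psi$ — wait, this reintroduces the reaction; so instead one keeps $w$ as the dual-controlled quantity and estimates the source term $f$ using that $f=\partial_t w - d\Delta w$ combined with a bootstrap: start from the a priori $L^1$ bounds \eqref{eq:mass_bounds} and Lemma \ref{lem:polynomial} to get a low-integrability bound on $w$, feed it into the duality inequality to gain integrability, and iterate finitely many times until reaching $p_0$, at each stage the smallness $\frac{|d_1-d_3|}{d_1+d_3}C_{\mathrm{mr},p_0'}<1$ providing the contraction. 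Once $\|w\|_{L^{p_0}(\Omega_T)}\le C_T$ is established, \eqref{u123_Lp} follows for $u_1,u_3$ by non-negativity, and for $u_2$ by noting $\partial_t(u_1+u_2) - \Delta(d_1u_1+d_2u_2) = -2u_1u_2+2u_3u_4$ has the wrong sign structure, so one instead uses $u_2 = u_1 + (u_2-u_1)$ where $z:=u_2-u_1$ solves $\partial_t z - d_2\Delta z = (d_2-d_1)\Delta u_1$ with bounded initial data, and applies Lemma \ref{lem_mr_rescaled} / maximal regularity once more, or symmetrically repeats the $w$-argument with the pair $(u_2,u_3)$ if instead \eqref{quasi-uniform} holds for $i=2$. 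The algebraic-in-$T$ dependence of $C_T$ is tracked through Lemma \ref{cor:embedding_for_N} and the finitely many bootstrap steps.
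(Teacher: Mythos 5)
Your overall strategy is the paper's: test $w=u_1+u_3$ (whose equation has no reaction term) against the backward dual problem with the averaged coefficient $d=\tfrac{d_1+d_3}{2}$, use the rescaled maximal regularity of Lemma \ref{lem_mr_rescaled} to control $\norm{\Delta\psi}_{L^{p_0'}}$, and absorb. However, there is a genuine gap at exactly the point you flag as ``the crux''. The factor $2$ that you believe spoils the absorption is an artifact of a lossy estimate: since $u_1,u_3\geq 0$, one has the \emph{pointwise} inequality $\abs{u_1-u_3}\leq u_1+u_3$, hence $\norm{u_1-u_3}_{L^{p_0}(\Omega_T)}\leq \norm{u_1+u_3}_{L^{p_0}(\Omega_T)}$ with constant exactly $1$, not $2$. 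Writing the source term as $(d_1-d)u_1\Delta\psi+(d_3-d)u_3\Delta\psi$ and bounding it by $\frac{\abs{d_1-d_3}}{2}\norm{u_1+u_3}_{L^{p_0}}\norm{\Delta\psi}_{L^{p_0'}} \leq \frac{\abs{d_1-d_3}}{d_1+d_3}C_{\mathrm{mr},p_0'}\norm{u_1+u_3}_{L^{p_0}}\norm{\theta}_{L^{p_0'}}$ closes the absorption directly under \eqref{quasi-uniform}. No bootstrap, rescaling, or iteration from $L^1$ is needed, and the alternatives you sketch do not repair the issue: the duality on $u_1-u_3$ reintroduces the reaction (as you note), and the proposed bootstrap from low integrability does not change the absorption constant, so it would face the same (phantom) factor of $2$ at every stage.

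A secondary issue is your treatment of $u_2$. Applying ``maximal regularity once more'' to $z=u_2-u_1$ with source $(d_2-d_1)\Delta u_1$ would require $\Delta u_1\in L^{p_0}(\Omega_T)$, which is not available at this stage (only $u_1\in L^{p_0}$ is known). The paper instead uses that $\partial_t(u_2+u_3)-\Delta(d_2u_2+d_3u_3)=0$ together with the standard duality lemma \cite[Lemma 33.3]{QS07}, which yields $\norm{u_2}_{L^{p_0}(\Omega_T)}\leq C(\norm{u_{2,0}+u_{3,0}}_{L^{p_0}(\Omega)}+\norm{u_3}_{L^{p_0}(\Omega_T)})$ once $u_3$ is controlled; your $z$-equation could also be handled by a second duality pairing (moving $\Delta$ onto $\psi$), but not by quoting maximal regularity for $\Delta u_1$ as written. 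Finally, the bound on $\norm{\psi(0)}_{L^{p_0'}}$ is obtained in the paper from $\psi(0)=-\int_0^T\partial_t\psi\,dt$ and the equation for $\psi$, giving the explicit algebraic factor $T^{1/p_0}$; this is a cleaner route than invoking Lemma \ref{cor:embedding_for_N}, though that detail is not where the proof lives or dies.
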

\begin{proof}
	Without loss of generality, we assume that \eqref{quasi-uniform} holds for $i=1$. Fix $0\leq \theta\in L^{p_0'}(\Omega_T)$ and let $\psi$ be the solution to \eqref{heat_equation_bw} with $\tau = 0$, and $d = \frac{d_1+d_3}{2}$. Thanks to Lemma \ref{lem_mr_rescaled} we know that
		\begin{equation}\label{2star}
			\|\Delta \psi\|_{L^{p_0'}(\Omega_T)} \leq \frac{2C_{\mathrm{mr},p_0'}}{d_1+d_3}\norm{\theta}_{L^{p_0'}(\Omega_T)}.
		\end{equation}
		Using integration by parts together with the fact that
\begin{equation}\label{eq:u_1+u_3}
\partial_t \pa{u_1(x,t)+u_3(x,t) }= d_1\Delta u_1(x,t)+d_3\Delta u_3(x,t)
\end{equation}
		 we find that
		\begin{equation}\label{b1}
		\begin{aligned}
		&\int_0^T\int_{\Omega}(u_1(x,t)+u_3(x,t))\theta(x,t) dxdt\\
		&= \int_0^T\int_{\Omega}(u_1(x,t)+u_3(x,t))(-\partial_t \psi(x,t) - d\Delta \psi(x,t))dxdt\\
		&= \int_{\Omega}(u_{1,0}(x)+u_{3,0}(x))\psi(x,0)dx + (d_1 - d)\int_0^T\int_{\Omega}u_1(x,t)\Delta \psi(x,t) dxdt\\
		&+ (d_3 - d)\int_0^T\int_{\Omega} u_3(x,t)\Delta \psi(x,t) dxdt \leq \norm{u_{1,0}+u_{3,0}}_{L^{p_0}(\Omega)}\norm{\psi(0)}_{L^{p_0'}(\Omega)} \\&+ \frac{|d_1 - d_3|}{2}\norm{u_1 + u_3}_{L^{p_0}(\Omega_T)}\norm{\Delta \psi}_{L^{p_0'}(\Omega_T)}\\
		&\leq \norm{u_{1,0}+u_{3,0}}_{L^{p_0}(\Omega)}\norm{\psi(0)}_{L^{p_0'}(\Omega)} + \frac{|d_1 - d_3|}{d_1+d_3}C_{\rm{mr}, p_0'}\norm{u_1 + u_3}_{L^{p_0}(\Omega_T)}\norm{\theta}_{L^{p_0'}(\Omega_T)}
		\end{aligned}
		\end{equation}
		where the last inequality followed from \eqref{2star}. Since $\psi$ also satisfies
		\begin{equation*}
			\norm{\partial_t \psi}_{L^{p_0'}(\Omega_T)} \leq \frac{d_1+d_3}{2}\norm{\Delta \psi}_{L^{p_0'}(\Omega_T)} + \norm{\theta}_{L^{p_0'}(\Omega_T)} \leq \pa{C_{\rm{mr},p_0'} + 1}\norm{\theta}_{L^{p_0'}(\Omega_T)},
		\end{equation*}
		where we used \eqref{2star} again, we find that
		\begin{equation*}
			\norm{\psi(0)}_{L^{p_0'}(\Omega)}^{p_0'} = \int_{\Omega}\abs{\int_0^T\partial_t\psi(x,s) ds}^{p_0'}dx \leq T^{\frac{p_0^\prime}{p_0}}\norm{\partial_t\psi}_{L^{p_0'}(\Omega_T)}^{p_0'},
		\end{equation*}
		and as such
		\begin{equation*}
			\norm{\psi(0)}_{L^{p_0'}(\Omega)} \leq T^{1/p_0}\pa{C_{\rm{mr}, p_0'} + 1}\norm{\theta}_{L^{p_0'}(\Omega_T)}.
		\end{equation*}
		Inserting this into \eqref{b1}, and using duality we obtain
		\begin{align*}
			\norm{u_1+u_3}_{L^{p_0}(\Omega_T)} \leq \norm{u_{1,0}+u_{3,0}}_{L^{p_0}(\Omega)}\pa{C_{\rm{mr}, p_0'} + 1}T^{1/p_0}\\
			+ \frac{|d_1 - d_3|}{d_1+d_3}C_{\rm{mr}, p_0'}\norm{u_1+u_3}_{L^{p_0}(\Omega_T)}.
		\end{align*}
Thus, since \eqref{quasi-uniform} is valid, we have that
		\begin{equation*}
			\norm{u_1+u_3}_{L^{p_0}(\Omega_T)} \leq  \frac{C_{\rm{mr}, p_0'} + 1}{1-\frac{|d_1 - d_3|}{d_1+d_3}C_{\rm{mr}, p_0'}}\norm{u_{1,0}+u_{3,0}}_{L^{p_0}(\Omega)}T^{1/p_0}:=C_T.
		\end{equation*}
		Since $u_1$ and $u_3$ are non-negative, this estimate also holds for $u_1$ and $u_3$ individually. \\
		To deal with $u_2$ we remember that \eqref{eq:sys} implies that
		\begin{equation*}
			\partial_t(u_2 + u_3)(x,t) - \Delta(d_2u_2 + d_3u_3)(x,t) = 0
		\end{equation*}
		and therefore, using \cite[Lemma 33.3, page 291]{QS07}, we find that
		\begin{equation*}
			\norm{u_2}_{L^{p_0}(\Omega_T)} \leq C\pa{\norm{u_{2,0}+u_{3,0}}_{L^{p_0}(\Omega)} + \norm{u_3}_{L^{p_0}(\Omega_T)}} \leq C_T,
		\end{equation*}
		which completes the proof.
\end{proof}
\begin{proposition}\label{global3D}
Assume the quasi-uniform condition \eqref{quasi-uniform} for the diffusion coefficients holds. Given any smooth, bounded, and non-negative initial data  that satisfies the compatibility condition, \eqref{eq:sys} admits a unique classical global, bounded solution which obeys the estimate \eqref{3D_u124_Linfty}.
\end{proposition}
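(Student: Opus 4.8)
The plan is to combine the $L^{p_0}$ bound from Lemma \ref{lem_b1} with a bootstrap argument using the heat regularity lemmas (Lemma \ref{lem:polynomial} and Lemma \ref{cor:embedding_for_N}) to upgrade integrability to $L^\infty$, and then run a fixed-point/continuation argument to obtain the global classical solution, tracking the time dependence of all constants to get the polynomial growth bound \eqref{3D_u124_Linfty}. First I would establish \emph{local} existence and uniqueness of a classical solution: since the nonlinearities are locally Lipschitz (indeed polynomial) and the initial data is smooth and compatible, the results of \cite{Ama85} give a unique classical solution on a maximal interval $[0,T_{\max})$, with the standard alternative that either $T_{\max}=\infty$ or one of the $\norm{u_i(t)}_{L^\infty(\Omega)}$ blows up as $t\to T_{\max}$. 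Nonnegativity of the $u_i$ follows from the quasi-positivity of the reaction terms (each $f_i$ is nonnegative whenever $u_i=0$ and the other components are nonnegative) together with the maximum principle applied componentwise. Thus it suffices to show an a priori bound: on any finite interval $[0,T]$ with $T<T_{\max}$, all four components are bounded in $L^\infty(\Omega_T)$ by a constant $C_T$ that grows at most algebraically in $T$; this simultaneously rules out finite-time blow-up (so $T_{\max}=\infty$) and yields \eqref{3D_u124_Linfty} with $\mu$ determined by the algebraic exponent.

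The core of the argument is the bootstrap. Starting from $u_1,u_2,u_3\in L^{p_0}(\Omega_T)$ with $p_0>\tfrac{N+2}{2}$ (Lemma \ref{lem_b1}), the reaction term $R:=-u_1u_2+u_3u_4$ (and its negative) must be controlled. The key observation is that $u_4$ is governed by an ODE in $t$: from $\partial_t u_4 = u_1u_2 - u_3u_4 \le u_1 u_2$ and $u_4\ge 0$ we get the pointwise bound $u_4(x,t) \le u_{4,0}(x) + \int_0^t u_1(x,s)u_2(x,s)\,ds$, so $\norm{u_4(\cdot,t)}_{L^r(\Omega)}$ is controlled by $\norm{u_{4,0}}_{L^\infty} + \norm{u_1u_2}_{L^r(\Omega_T)}$ for appropriate $r$, i.e. $u_4\in L^r(\Omega_T)$ whenever $u_1u_2\in L^r(\Omega_T)$. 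Feeding this back, the right-hand sides of the equations for $u_1,u_2,u_3$ lie in $L^{p_0/2}(\Omega_T)$ (from $u_1u_2$, $u_3u_4$ with both factors in $L^{p_0}$), so Lemma \ref{lem:polynomial} applied to the heat equations for $u_1,u_2,u_3$ yields higher integrability: either directly $L^\infty$ if $p_0/2 > \tfrac{N+2}{2}$, or an improved exponent $s = \tfrac{(N+2)(p_0/2)}{N+2-2(p_0/2)}$ which one then iterates. Each iteration roughly doubles (or better) the integrability exponent up to the point where the product is past the critical threshold $\tfrac{N+2}{2}$; since this happens in finitely many steps, one arrives at $u_1,u_2,u_3\in L^\infty(\Omega_T)$, whence $u_1u_2\in L^\infty(\Omega_T)$ and therefore $u_4\in L^\infty(\Omega_T)$ by the ODE bound, with a linear-in-$T$ factor coming from the time integral. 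Throughout, every constant produced by Lemma \ref{lem:polynomial} depends at most algebraically on $T$ (the initial-data contribution is $T$-independent, the source contribution carries powers of $T$ from H\"older in time), so the final $L^\infty$ bound is $C_T \le C(1+T)^\mu$.

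I expect the main obstacle to be making the bootstrap genuinely terminate with exponents that stay \emph{above} $1$ and eventually cross the critical value $\tfrac{N+2}{2}$, while simultaneously keeping careful track of the $T$-dependence so that the growth remains algebraic rather than, say, exponential in $T$ (a naive Gronwall-type closure of the $u_4$ feedback could easily produce exponential growth). The subtlety is the circular dependence: $u_4$ depends on $u_1u_2$, which through the heat equation depends on $u_3u_4$, which depends on $u_4$ again; one must break this by noting that the $u_4$-dependence in the sources for $u_1,u_2,u_3$ enters only through the \emph{product} $u_3u_4$, and $u_3$ gains regularity from its own (non-degenerate) diffusion independently, so at each stage the integrability of $u_4$ is strictly dominated by that gained for $u_1,u_2,u_3$ in the previous stage — there is no true loss. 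A secondary technical point is handling the $u_2$ equation, for which the improved duality of Lemma \ref{lem_b1} only gave $L^{p_0}$ via the auxiliary identity $\partial_t(u_2+u_3) - \Delta(d_2u_2+d_3u_3)=0$ and \cite[Lemma 33.3]{QS07}; in the bootstrap one treats $u_2$ on the same footing as $u_1$ since its source term has the same structure. Uniqueness on $[0,T_{\max})$ is immediate from the local theory and the now-established boundedness, and is re-derived on each $[0,T]$ by a Gronwall estimate on the difference of two solutions using the local Lipschitz bound on bounded sets.
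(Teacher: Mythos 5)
Your overall architecture matches the paper's: local existence and non-negativity via \cite{Ama85} and quasi-positivity, the blow-up alternative, the $L^{p_0}(\Omega_T)$ duality bound from Lemma \ref{lem_b1}, a bootstrap to $L^\infty$, and finally $u_4$ via the pointwise ODE bound \eqref{b3}. However, the bootstrap step as you describe it has a genuine gap. You claim that after the first feedback the right-hand sides of the equations for $u_1,u_2,u_3$ lie in $L^{p_0/2}(\Omega_T)$ ``from $u_1u_2$, $u_3u_4$ with both factors in $L^{p_0}$'' --- but $u_4$ is \emph{not} known to be in $L^{p_0}(\Omega_T)$ at this stage. Your own ODE bound only gives $u_4\in L^{r}$ when $u_1u_2\in L^r$, i.e. $u_4\in L^{p_0/2}$, so H\"older puts $u_3u_4$ only in $L^{p_0/3}$. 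More generally, if $u_1,u_2,u_3\in L^{q}$ then the source $u_3u_4$ lands in $L^{q/3}$, and the exponent map $q\mapsto \frac{(N+2)\,q/3}{N+2-2q/3}$ from Lemma \ref{lem:polynomial}(i) is expanding only when $q>N+2$, whereas you start from $p_0$ barely above $\frac{N+2}{2}$; for $N=3$ one can even have $p_0/3<1$, so the lemma does not apply at all. Your stated resolution of the circularity (``the integrability of $u_4$ is strictly dominated by that gained for $u_1,u_2,u_3$ in the previous stage --- there is no true loss'') is therefore false as written: there \emph{is} a loss, and the iteration does not close.

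The paper breaks the circle with two devices you do not invoke. First, $u_4$ is kept out of the $u_3$ bootstrap entirely by using the sign of the reaction term: $\partial_t u_3 - d_3\Delta u_3 \le u_1u_2$, so the comparison principle plus Lemma \ref{lem:polynomial} with source $u_1u_2\in L^{p_0/2}$ yields the iteration $p_{n+1}=\frac{(N+2)\,p_n/2}{N+2-p_n}$, which is strictly increasing precisely because $p_0>\frac{N+2}{2}$ and reaches $N+2$ in finitely many steps. (Note the analogous one-sided bound for $u_1$ is $\partial_t u_1 - d_1\Delta u_1\le u_3u_4$, which is the \emph{unfavourable} side, so $u_1,u_2$ cannot be bootstrapped this way.) Second, each improved bound on $u_3$ is transferred to $u_1$ and $u_2$ through the conserved parabolic combinations $\partial_t(u_i+u_3)-\Delta(d_iu_i+d_3u_3)=0$ and \cite[Lemma 33.3]{QS07}, so the product $u_3u_4$ never has to be estimated until the very end, when $u_3\in L^\infty(\Omega_T)$ and $u_1,u_2\in L^q(\Omega_T)$ for every $q<\infty$ are already available; only then does $u_3u_4\in L^{q}$ for $q>N+2$ give $u_1,u_2\in L^\infty(\Omega_T)$ and hence $u_4\in L^\infty(\Omega_T)$ via \eqref{b3}. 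Without these two ingredients your iteration stalls at the first step.
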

\begin{proof}
	To begin with we notice that since the non-linearities on the right hand side of \eqref{eq:sys} are locally Lipschitz, the local existence of a classical bounded solution on a maximal interval $(0,T_{\max})$ is a classical result (see, \cite{Ama85}). Furthermore, as the non-linearities are quasi-positive, that is $f_i(\bm{u})\geq 0$ as long as $\bm{u} \in \mathbb R_+^4$ and $u_i = 0$, we know that if the initial data is non-negative, then when a solution exists it must also be non-negative (see e.g. \cite{Pie10}). Moreover,
	\begin{equation}\label{global_criterion}
		\text{ if } \; \sup_{i=1,\ldots, 4}\limsup_{t\to T_{\max}^-}\|u_i(t)\|_{L^\infty(\Omega)}<+\infty.
	\end{equation}
	then $T_{\max}=\infty$, and the solution is in fact global. 
	From Lemma \ref{lem_b1} we have that for all $0<T<T_{\max}$,
	\begin{equation*}
		\norm{u_i}_{L^{p_0}(\Omega_T)} \leq C_T \quad \text{ for } \quad i=1,2,3
	\end{equation*}
 where $C_T$ \textit{grows at most algebraically} in $T$, and $p_0 = \frac{p_0'}{p_0'-1} > \frac{N+2}{2}$. We use that together with our system of PDEs to bootstrap our norm estimate until we reach $L^\infty$.
 
 The non-negativity of the solution implies that
	\begin{equation*}
		\partial_t u_3(x,t) -d\Delta u_3(x,t) \leq u_1(x,t)u_2(x,t).
	\end{equation*}
Moreover, since $u_1u_2 \in L^{\frac{p_0}{2}}(\Omega_T)$ we can use the comparison principle for the heat equation (see, for instance, \cite[Theorem 2.2.1]{Pao12}) and the maximal regularity from Lemma \ref{lem:polynomial} to conclude that
	\begin{equation}\label{b2}
		\|u_3\|_{L^{p_1-}(\Omega_T)} \leq C_{T,p_1-}
	\end{equation}
	with $p_1 = \frac{(N+2)\frac{p_0}{2}}{N+2-p_0}$ if $N+2 > p_0$ and $p_1 = +\infty$ otherwise, where above means that for any $s<p_1\leq +\infty$
	$$\norm{u_3}_{L^{s}(\Omega_T)} \leq C_{T,s}.$$
	Using the fact that
	\begin{equation*}
		\partial_t(u_1 + u_3)(x,t)- \Delta(d_1u_1 + d_3u_3) (x,t)= 0, \quad  \quad \partial_t(u_2 + u_3)(x,t) - \Delta(d_2u_2 + d_3u_3)(x,t) = 0
	\end{equation*}
	and applying \cite[Lemma 33.3]{QS07} again, we conclude, due to \eqref{b2}, that
	\begin{equation*}
		\norm{u_1}_{L^{p_1-}(\Omega_T)} \leq C_{T,p_1-} \quad \text{ and } \quad \norm{u_2}_{L^{p_1-}(\Omega_T)} \leq C_{T,p_1-}.
	\end{equation*}
	If $p_1=+\infty$, we stop here. Otherwise, we repeat this process and construct a sequence $\{p_n\}_{n\geq 0}$ such that 
$$p_{n+1}=\begin{cases} \frac{(N+2)\frac{p_n}{2}}{N+2-p_n} & N+2 > p_n\\
+\infty & \text{otherwise},
\end{cases}$$	
and
	\begin{equation*}
		\norm{u_i}_{L^{p_n-}(\Omega_T)} \leq C_T \quad \text{ for } \quad i=1,2,3 \quad \text{ and } \quad n\ge 1.
	\end{equation*}
	We claim that since $p_0>\frac{N+2}{2}$, there must exist $n_0\geq 1$ such that $p_{n_0} \geq N+2$. Indeed, assume that $p_n < N+2$ for all $n\geq 1$. Then by definition, we have that
	\begin{equation*}
		\frac{p_{n+1}}{p_n} = \frac{N+2}{2(N+2-p_n)}.
	\end{equation*}	
which will be shown to be greater or equal to $1$. Since $p_0 > \frac{N+2}{2}$ we have that
$$\frac{p_1}{p_0}=\frac{N+2}{2(N+2-p_0)} > \frac{N+2}{2(N+2-\frac{N+2}{2})}=1.$$
Continuing by induction we assume that $\frac{p_{n}}{p_{n-1}}>1$ and conclude that
$$\frac{p_{n+1}}{p_n}=\frac{N+2}{2(N+2-p_n)} > \frac{N+2}{2(N+2-p_{n-1})}=\frac{p_n}{p_{n-1}}>1.$$
Thus, $\br{p_n}_{n\in\N}$ is increasing and satisfies
$$p_0 < p_n < \frac{N+2}{2},\quad \forall n\in\N.$$
This implies that $\br{p_n}_{n\in\N}$ converges to a finite limit $p>p_0$, that must satisfy
$$1=\frac{N+2}{2\pa{N+2-p}},$$
which is, of course, impossible. We have thus found $n_0 \geq 1$ such that $p_{n_0} \geq N+2$. Applying Lemma \ref{lem:polynomial} one more time to
\begin{equation}\label{eq:u3_eq_for_bootstrap}
\partial_t u_3 (x,t)- d_3\Delta u_3 (x,t)\leq u_1(x,t)u_2(x,t),
\end{equation}
now for $u_1u_2 \in L^{\frac{p_{n_0}}{2}}(\Omega_T)$ and $\frac{p_{n_0}}{2} \geq \frac{N+2}{2}$, we find that
	\begin{equation*}
		\norm{u_3}_{L^q(\Omega_T)} \leq C_{q,T} \quad \text{ for all } \quad 1\leq q < +\infty.
	\end{equation*}
	Consequently, like before, 
	\begin{equation*}
		\norm{u_1}_{L^q(\Omega_T)} + \norm{u_2}_{L^q(\Omega_T)}  \leq C_{q,T} \quad \text{ for all } \quad 1\leq q < +\infty.
	\end{equation*}
Looking at \eqref{eq:u3_eq_for_bootstrap}, with the above knowledge, we use the comparison principle and Lemma \ref{lem:polynomial} again, this time with any $q_0>N+2$, to conclude that
	\begin{equation}\label{eq:u3_infty}
		\norm{u_3}_{L^\infty(\Omega_T)} \leq C_{T,q_0}.
	\end{equation}
	To obtain boundedness of $u_1, u_2$ and $u_4$ we first observe that
	\begin{equation}\label{b3}
		u_4(x,t) \leq u_{4,0}(x) + \int_0^tu_1(x,s)u_2(x,s)ds,
	\end{equation}
	which implies for any $1\leq q <\infty$, 
	\begin{equation*}
	\begin{gathered}
		\norm{u_4}_{L^q(\Omega_T)} \leq T\sup_{t\in (0,T)}\norm{u_4(t)}_{L^q(\Omega)} \leq T\pa{\norm{u_{4,0}}_{L^q(\Omega)} + T^{\frac{1}{q^\prime}}\norm{u_1u_2}_{L^q(\Omega_T)}}\\
		\leq T\pa{\norm{u_{4,0}}_{L^q(\Omega)} + T^{\frac{1}{q^\prime}}\pa{\norm{u_1}_{L^{2q}(\Omega_T)}\norm{u_2}_{L^{2q}(\Omega_T)}}}\leq C_{T,q}.
		 \end{gathered}
	\end{equation*}
	Together with \eqref{eq:u3_infty} we find that $u_3u_4 \in L^q(\Omega_T)$ for any $1\leq q<\infty$, with a time bound that grows at most algebraically in $T$ (and depends on $q$). Since
	$$\partial_t u_1 (x,t)- d_1\Delta u_2 (x,t)\leq u_3(x,t)u_4(x,t), $$
	$$\partial_t u_2 (x,t)- d_2\Delta u_2(x,t) \leq u_3(x,t)u_4(x,t) $$
	we use the comparison principle and Lemma \ref{lem:polynomial} to see that 
	\begin{equation*}
		\norm{u_1}_{L^\infty(\Omega_T)}  \leq C_{T}
	\end{equation*}
	and  
	\begin{equation*}
		\norm{u_2}_{L^\infty(\Omega_T)} \leq C_{T}.
	\end{equation*}
	Lastly, the above, together with \eqref{b3}, imply that
	\begin{equation*}
		\norm{u_4}_{L^\infty(\Omega_T)} \leq C_T,
	\end{equation*}
	and we can conclude that \eqref{global_criterion} is satisfied, completing the proof.
\end{proof}
\begin{remark}\label{rem:existence_sol_no_dim}
A careful look at the proofs of Lemma \ref{lem_b1} and Proposition \ref{global3D} show that we have not used any condition on the dimension $N$. Indeed, as long as condition \eqref{quasi-uniform} is satisfied, with $C_{\mathrm{mr},p_0^\prime}$ the appropriate constant from Lemma \ref{lem_mr} (even for $N=1,2$), we can conclude the existence of a global classical solution, that is non-negative when the initial data is non-negative. \\
An immediate consequence of this, which we will use in what follows, is that Remark \ref{rem:mass_bounds} is valid, and we also obtain estimates \eqref{eq:mass_bounds}, i.e.
$$\norm{u_i(t)}_{L^1\pa{\Omega}} \leq M_{ij},\quad\quad i=1,2,\;j=3,4.$$
\end{remark}
To fully show Proposition \ref{Global3D}, we now turn our attention to prove the norm estimates \eqref{3D_u3} and \eqref{3D_u124}. The key idea we will employ to show these estimates is to study the system \eqref{eq:sys} on cylinders $\Omega_{\tau,\tau+1}$ with $\tau \in \R_{+}$, and to find corresponding estimates {\it uniformly in $\tau$}.\\
In what follows we will always assume that $\br{u_i}_{i=1,\dots,4}$ are classical solutions to our system of equations, and will write our lemmas more succinctly. 
\begin{lemma}\label{lem:some_L_p_estimaes_for_123}
Assume that \eqref{quasi-uniform} holds for $i_0\in \{1,2\}$. Then
\begin{equation}\label{eq:some_L_p_estimates_for_123}
\sup_{\tau \geq 0}\pa{\|u_{i_0}\|_{L^{p_0}(\Omega_{{\tau,\tau+1}})}+\|u_3\|_{L^{p_0}(\Omega_{{\tau,\tau+1}})}} \leq C_0,
\end{equation}
where the constant $C_0$ depends only on $p_0^\prime$, the diffusion coefficients $d_1$, $d_2$, $d_3$, the domain $\Omega$, the initial masses $M_{ij}$, and $\br{\norm{u_i}_{L^{\infty}(\Omega_{0,1})}}_{i=1,2,3}$.
\end{lemma}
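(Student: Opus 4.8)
The strategy is to prove a local-in-time, uniform-in-$\tau$ version of the duality estimate of Lemma \ref{lem_b1}, in which the initial-data contribution is replaced by a term that is bounded \emph{uniformly} by means of the mass conservation laws \eqref{eq:mass_conservation} (valid here by Remark \ref{rem:existence_sol_no_dim}). By the symmetry between $u_1$ and $u_2$ we may assume \eqref{quasi-uniform} holds for $i_0=1$; since $0\le u_1,u_3\le u_1+u_3$, it then suffices to bound $\|u_1+u_3\|_{L^{p_0}(\Omega_{\tau,\tau+1})}$ uniformly in $\tau\ge 0$ (if $i_0=2$ one runs the identical argument for $u_2+u_3$, using $\partial_t(u_2+u_3)=d_2\Delta u_2+d_3\Delta u_3$, and again both $u_2$ and $u_3$ are recovered from the sum).

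For an integer $n\ge 1$, fix $0\le\theta\in L^{p_0'}(\Omega_{n,n+1})$, extend it by $0$ to $\Omega_{n-1,n+1}$, and let $\psi$ solve the backward heat equation \eqref{heat_equation_bw} on $\Omega_{n-1,n+1}$ with this source, terminal datum $\psi(\cdot,n+1)=0$, homogeneous Neumann boundary conditions, and diffusion coefficient $d=\tfrac{d_1+d_3}{2}$. Lemma \ref{lem_mr_rescaled} gives $\|\Delta\psi\|_{L^{p_0'}(\Omega_{n-1,n+1})}\le\tfrac{2C_{\mathrm{mr},p_0'}}{d_1+d_3}\|\theta\|_{L^{p_0'}(\Omega_{n,n+1})}$. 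Testing \eqref{eq:u_1+u_3} against $\psi$ and integrating by parts, exactly as in the proof of Lemma \ref{lem_b1} (with $[0,T]$ replaced by $[n-1,n+1]$ and the splitting $d_1\Delta u_1+d_3\Delta u_3=d\,\Delta(u_1+u_3)+\tfrac{d_1-d_3}{2}\Delta(u_1-u_3)$), one obtains
\begin{equation*}
\int_{\Omega_{n,n+1}}(u_1+u_3)\,\theta = \int_\Omega (u_1+u_3)(x,n-1)\,\psi(x,n-1)\,dx + \frac{d_1-d_3}{2}\int_{\Omega_{n-1,n+1}}(u_1-u_3)\,\Delta\psi .
\end{equation*}
Using $|u_1-u_3|\le u_1+u_3$ together with the maximal-regularity bound, the last term is at most $q\,\|u_1+u_3\|_{L^{p_0}(\Omega_{n-1,n+1})}\,\|\theta\|_{L^{p_0'}(\Omega_{n,n+1})}$ with $q:=\tfrac{|d_1-d_3|}{d_1+d_3}C_{\mathrm{mr},p_0'}<1$ by \eqref{quasi-uniform}.

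The delicate term is the one at time $n-1$, and it is precisely here that one must enlarge the time window by a full unit into the past. Estimate it by $\|(u_1+u_3)(\cdot,n-1)\|_{L^1(\Omega)}\,\|\psi(\cdot,n-1)\|_{L^\infty(\Omega)}$; the first factor equals $M_{13}$ by the conservation law \eqref{eq:mass_conservation}, while for the second one writes $\psi(\cdot,n-1)=\int_n^{n+1}e^{(s-n+1)d\Delta}\theta(\cdot,s)\,ds$, so that $\psi(\cdot,n-1)$ is obtained from $\theta$ — supported at times $\ge n$ — by running the Neumann heat semigroup for a time lying in $[1,2]$; ultracontractivity of that semigroup over a time bounded below away from $0$ then yields $\|\psi(\cdot,n-1)\|_{L^\infty(\Omega)}\le C\,\|\theta\|_{L^{p_0'}(\Omega_{n,n+1})}$ with $C$ depending only on $\Omega$, $N$, $d$, $p_0'$, crucially not on $n$. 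Collecting these bounds, taking the supremum over $\theta$ with $\|\theta\|_{L^{p_0'}(\Omega_{n,n+1})}\le 1$, and writing $a_n:=\|u_1+u_3\|_{L^{p_0}(\Omega_{n,n+1})}^{p_0}$, one is led to a recursive inequality of the form
\begin{equation*}
a_n^{1/p_0}\le C\,M_{13} + q\,(a_{n-1}+a_n)^{1/p_0},\qquad n\ge 1 .
\end{equation*}
On the index set where $a_{n-1}\le a_n$ this closes, by the smallness in \eqref{quasi-uniform}, to a bound depending only on $M_{13}$, $q$, $\Omega$, $N$, $p_0'$, $d_1$, $d_3$; Lemma \ref{elementary} then propagates it to all $n$, the base value $a_0\le|\Omega|\,(\|u_1\|_{L^\infty(\Omega_{0,1})}+\|u_3\|_{L^\infty(\Omega_{0,1})})^{p_0}$ being admissible (and for $\tau\in[0,1)$ one may alternatively simply quote Lemma \ref{lem_b1} on $\Omega_{0,2}$, whose constant is controlled by $|\Omega|^{1/p_0}\|u_i\|_{L^\infty(\Omega_{0,1})}$). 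Since $\Omega_{\tau,\tau+1}\subset\Omega_{\lfloor\tau\rfloor,\lfloor\tau\rfloor+2}$, this gives $\|u_1+u_3\|_{L^{p_0}(\Omega_{\tau,\tau+1})}\le(a_{\lfloor\tau\rfloor}+a_{\lfloor\tau\rfloor+1})^{1/p_0}$ uniformly in $\tau$, hence the bounds for $u_{i_0}$ and $u_3$. The main obstacle is the interaction between these two features: one needs the enlarged window $[n-1,n+1]$ so that the smoothing of $\psi$ at its left endpoint produces an $L^{p_0'}\to L^\infty$ bound with an $n$-independent constant, while the smallness condition \eqref{quasi-uniform} is exactly what allows the resulting recursion to absorb the extra mass of $u_1+u_3$ that the enlarged window introduces.
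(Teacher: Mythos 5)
Your overall architecture (duality on a unit window shifted by $\tau$, absorption via \eqref{quasi-uniform}, mass conservation to make the constant $\tau$-independent, Lemma \ref{elementary} to close the recursion) matches the paper's, but your mechanism for handling the initial-time contribution is genuinely different and it introduces a gap in the absorption step. In your identity the dual pairing on the left lives on $\Omega_{n,n+1}$ (since $\theta$ vanishes on $[n-1,n)$), whereas the error term $\tfrac{d_1-d_3}{2}\int(u_1-u_3)\Delta\psi$ lives on the full window $\Omega_{n-1,n+1}$. Consequently your recursion reads $a_n^{1/p_0}\le CM_{13}+q\,(a_{n-1}+a_n)^{1/p_0}$, and on the set $\{a_{n-1}\le a_n\}$ this only closes if $q\,2^{1/p_0}<1$ (or $q<1/2$ if you instead split $(a_{n-1}+a_n)^{1/p_0}\le a_{n-1}^{1/p_0}+a_n^{1/p_0}$). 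Condition \eqref{quasi-uniform} only guarantees $q=\tfrac{|d_1-d_3|}{d_1+d_3}C_{\mathrm{mr},p_0'}<1$, which is strictly weaker. This is not a cosmetic constant issue: the lemma is later invoked in dimensions $1,2$ for \emph{arbitrary} diffusion coefficients via Lemma \ref{improved_duality_12D}, where $q$ can be arbitrarily close to $1$, so your version would not cover all the cases the paper needs.

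The paper avoids this mismatch by multiplying the solution by a smooth time cutoff $\varphi_\tau$ with $\varphi_\tau(\tau)=0$ and $\varphi_\tau\equiv 1$ on $[\tau+1,\tau+2]$. This kills the initial boundary term outright (so no semigroup ultracontractivity is needed), and makes the dualized quantity $\varphi_\tau(u_1+u_3)$ live on exactly the same cylinder $\Omega_{\tau,\tau+2}$ as the error term, so absorption works for any $q<1$. The price is the commutator term $\int\psi_\theta\,\varphi_\tau'(u_1+u_3)$, which the paper renders harmless by interpolating $\|u_1+u_3\|_{L^{q'}}$ between $L^1$ (controlled by $M_{13}$) and $L^{p_0}$, yielding a sublinear power $a_n^{1-\alpha}$ that Lemma \ref{elementary} handles. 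To repair your argument you would either have to adopt this cutoff device, or restate the hypothesis with the smaller threshold $\delta=2^{-1/p_0}/C_{\mathrm{mr},p_0'}$ — which would weaken Theorem \ref{thm:main} in dimensions $1$ and $2$. The remaining ingredients of your proof (the integration by parts, the bound $\|\psi(\cdot,n-1)\|_{L^\infty(\Omega)}\le C\|\theta\|_{L^{p_0'}(\Omega_{n,n+1})}$ via the Neumann heat semigroup run for a time in $[1,2]$, and the treatment of $\tau\in[0,1)$) are correct.
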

\begin{proof}
Without loss of generality, we assume that \eqref{quasi-uniform} holds for $i_0=1$.  For any $0\leq \theta \in L^{p_0'}\pa{\Omega_{{\tau,\tau+2}}}$ we set $\psi_{\theta}$ to be the solution of \eqref{heat_equation_bw} with final time $T:= \tau + 2$, diffusion coefficient $d=\frac{d_1+d_3}{2}$ and inhomogeneous source $-\theta$. From Lemma \ref{lem_mr_rescaled} we know that 
\begin{equation}\label{eq:est_for_lapl}
\|\Delta \psi_{\theta}\|_{L^{p_0'}(\Omega_{\tau,\tau+2})} \leq \frac{2C_{\rm{mr},p_0'}}{d_1+d_3}\|\theta\|_{L^{p_0'}(\Omega_{\tau,\tau+2})}.
\end{equation}
Lemma \ref{cor:embedding_for_N}, and the fact that
$$p_0^\prime = \frac{p_0}{p_0-1} < \frac{N+2}{N} \underset{N\geq 3}{<}\frac{N+2}{2}$$
assures us that for any $q<\frac{(N+2)p_0'}{N+2-2p_0'}$
\begin{equation}\label{eq:some_L_p_estimates_proof_I}
\|\psi_{\theta}\|_{L^{q}(\Omega_{\tau,\tau+2})} \leq C(p_0',d,\Omega)\|\theta\|_{L^{p_0'}(\Omega_{\tau,\tau+2})},
\end{equation}
where the constant $C(p_0', d, \Omega)$ in the above is independent of $\tau$. Let $\phi(s)$ be a $C^\infty\pa{\R}$ function such that $\phi(0)=0$, $0\leq\phi\leq 1$ and $\phi\vert_{[1,\infty)}\equiv 1$. Defining the function $\phi_{\tau}(s)=\phi\pa{s+\tau}$ we notice that
\begin{equation}\nonumber
\begin{gathered}
\partial_t(\varphi_\tau(t) u_1(x,t)) - d_1\Delta_x(\varphi_\tau(t) u_1(x,t)) = u_1(x,t)\varphi^\prime_\tau(t) + \phi_\tau(t)\pa{\partial_t u_1(x,t)+d_1\Delta u_1(x,t)}\\
=u_1(x,t)\varphi^\prime_\tau(t) + \phi_\tau(t) f_1\pa{\bm{u}(x,t)}
\end{gathered}
\end{equation}
Using the fact that $\psi_{\theta}(x,\tau+2)=0$ and $\phi_\tau(\tau)u_1(x,\tau)=0$ we see that by integration by parts
\begin{align*}
&\int_\tau^{\tau+2}\int_{\Omega}(\varphi_\tau(t) u_1(x,t))\theta(x,t) dxdt\\
&=-\int_\tau^{\tau+2}\int_{\Omega}(\varphi_\tau(t) u_1(x,t))(\partial_t \psi_{\theta}(x,t) + \pa{d_1+d-d_1}\Delta \psi_{\theta}(x,t))dxdt\\
&=\int_\tau^{\tau+2}\int_{\Omega}\psi_{\theta}(x,t)\pa{\partial_t(\varphi_\tau(t) u_1(x,t)) - d_1\Delta(\varphi_\tau(t) u_1(x,t))}dxdt \\
&\quad + (d_1 - d)\int_\tau^{\tau+2}\int_{\Omega}(\varphi_\tau(t) u_1(x,t))\Delta \psi_{\theta}(x,t) dxdt\\
&= \int_\tau^{\tau+2}\int_{\Omega}\psi_{\theta}(x,t)\pa{\varphi_\tau^\prime (t) u_1(x,t) + \varphi_\tau(t) f_1(\bm{u}(x,t))}dxdt\\
&\quad + (d_1 - d)\int_\tau^{\tau+2}\int_{\Omega}(\varphi_\tau(t) u_1(x,t))\Delta \psi_{\theta}(x,t) dxdt.
\end{align*}
Similarly,
\begin{align*}
&\int_\tau^{\tau+2}\int_{\Omega}(\varphi_\tau(t) u_3(x,t))\theta(x,t) dxdt\\
&= \int_\tau^{\tau+2}\int_{\Omega}\psi_{\theta}(x,t)\pa{\varphi_\tau^\prime (t) u_3(x,t) + \varphi_\tau(t) \underbrace{f_3(\bm{u}(x,t))}_{-f_1\pa{\bm{u}(x,t)}}}dxdt\\
& \quad + (d_3 - d)\int_\tau^{\tau+2}\int_{\Omega}\pa{\varphi_\tau(t) u_3(x,t)}\Delta \psi_{\theta}(x,t) dxdt.
\end{align*}	
Summing these two equalities, and using the non-negativity of the solutions together with the choice $d=\frac{d_1+d_3}{2}$, yields 
\begin{equation}\label{eq:some_L_p_estimates_proof_II}
\begin{aligned}
&\int_{\tau}^{\tau+2}\int_{\Omega}(\varphi_\tau(t)(u_1(x,t)+u_3(x,t)))\theta(x,t) dxdt\\
&\leq \int_{\tau}^{\tau+2}\int_{\Omega}\psi_{\theta}(x,t) \varphi_\tau^\prime (t)(u_1(x,t) + u_3(x,t))dxdt\\
&+ \frac{\abs{d_1-d_3}}{2}\int_{\tau}^{\tau+2}\int_{\Omega}\varphi_\tau(t)(u_1(x,t) + u_3(x,t))\abs{\Delta \psi_{\theta}(x,t)}dxdt=\mathcal{I}+\mathcal{II}.
\end{aligned}
\end{equation}
Using H\"older's inequality together with \eqref{eq:est_for_lapl} we find that
\begin{equation}\label{eq:some_L_p_estimates_proof_III}
\begin{aligned}
\mathcal{II} &\leq \frac{|d_1-d_3|}{2}\|\Delta \psi_{\theta}\|_{L^{p_0'}(\Omega_{\tau,\tau+2})}\|\varphi_\tau(u_1 + u_3)\|_{L^{p_0}(\Omega_{\tau,\tau+2})}\\
&\leq \frac{|d_1-d_3|}{d_1+d_3}C_{\rm{mr},p_0'}\| \theta\|_{L^{p_0'}(\Omega_{\tau,\tau+2})}\|\varphi_\tau(u_1 + u_3)\|_{L^{p_0}(\Omega_{\tau,\tau+2})}
\end{aligned}
\end{equation}
To estimate $\mathcal{I}$, we pick some $1<q^\prime<p_0$ such that its H\"older conjugate, $q = \frac{q'}{q'-1}$, satisfies
\begin{equation}\nonumber
q < \frac{(N+2)p_0'}{N+2-2p_0'}.
\end{equation}
This is possible since $q^\prime < p_0$ if and only if $p_0'<q$, and as $p_0^\prime<\frac{(N+2)p_0'}{N+2-2p_0'}$, we can easily choose $q$ between them. Using Lemma \ref{cor:embedding_for_N} again, we find that
\begin{equation}\nonumber
\|\psi_{\theta}\|_{L^{q}(\Omega_{\tau,\tau+2})} \leq C\pa{p_0',d,\Omega}\|\theta\|_{L^{p_0'}(\Omega_{\tau,\tau+2})},
\end{equation}
and as such, using the fact that $\phi^{\prime}_{\tau}\vert_{\rpa{\tau+1,\tau+2}}=0$ and $|\varphi_\tau'| \leq C$,
\begin{equation}\label{eq:some_L_p_estimates_proof_IV}
\begin{gathered}
\abs{\mathcal{I}}\leq C\pa{p_0',d,\Omega} \|\psi_\theta\|_{L^{q}(\Omega_{\tau,\tau+2})}\|u_1+u_3\|_{L^{q'}(\Omega_{\tau,\tau+1})}\\
 \leq C\pa{p_0',d,\Omega}\|\theta\|_{L^{p_0'}(\Omega_{\tau,\tau+2})}\|u_1+u_3\|_{L^{q'}(\Omega_{\tau,\tau+1})}
\end{gathered}
\end{equation}
Since $1< q^\prime < p_0$, we know that for $0<\alpha<1$ such that 
$$\frac{1}{q^\prime} = \frac{\alpha}{1} + \frac{1-\alpha}{p_0}$$
one has 
\begin{equation}\nonumber
\|f\|_{L^{q^\prime}(\Omega)} \leq \|f\|_{L^1(\Omega)}^{\alpha}\|f\|_{L^{p_0}(\Omega)}^{1-\alpha},
\end{equation}
which, together with \eqref{eq:some_L_p_estimates_proof_IV}, yield the estimate 
\begin{equation}\label{eq:some_L_p_estimates_proof_V}
\begin{gathered}
\abs{\mathcal{I}}\leq  C\pa{p_0',d,\Omega}\|\theta\|_{L^{p_0'}(\Omega_{\tau,\tau+2})}M_{13}^\alpha\|u_1+u_3\|^{1-\alpha}_{L^{p_0}(\Omega_{\tau,\tau+2})},
\end{gathered}
\end{equation}
where we have used the non-negativity of the solutions, and the mass conservation property again. By inserting \eqref{eq:some_L_p_estimates_proof_III} and \eqref{eq:some_L_p_estimates_proof_V} into \eqref{eq:some_L_p_estimates_proof_II} we obtain
\begin{equation}\nonumber
\begin{aligned}
&\int_{\tau}^{\tau+2}\int_{\Omega}(\varphi_\tau(t)(u_1(x,t)+u_3(x,t)))\theta(x,t) dxdt \\
&\leq \|\theta\|_{L^{p_0'}(\Omega_{\tau,\tau+2})}\left(C\|u_1+u_3\|_{L^{p_0}(\Omega_{\tau,\tau+1})}^{1-\alpha} +\frac{|d_1-d_3|}{d_1+d_3}C_{\rm{mr},p_0'}\|\varphi_\tau(u_1+u_3)\|_{L^{p_0}(\Omega_{\tau,\tau+2})}\right)
\end{aligned}
\end{equation}
with $C>0$ depending only on $p_0'$, $d$, $\Omega$ and $M_1(0)+M_3(0)$. As $\theta$ was arbitrary, we conclude that
\begin{equation}\nonumber
\|\varphi_\tau (u_1+u_3)\|_{L^{p_0}(\Omega_{\tau,\tau+2})} \leq C\|u_1+u_3\|_{L^{p_0}(\Omega_{\tau,\tau+1})}^{1-\alpha} + \frac{|d_1-d_3|}{d_1+d_3}C_{\rm{mr},p_0'}\|\varphi_\tau(u_1+u_3)\|_{L^{p_0}(\Omega_{\tau,\tau+2})},
\end{equation}
which translates to 
\begin{equation}\label{eq:some_L_p_estimates_proof_VI}
\|\varphi_\tau (u_1+u_3)\|_{L^{p_0}(\Omega_{\tau,\tau+2})} \leq \frac{C}{1-\frac{|d_1-d_3|}{d_1+d_3}C_{\rm{mr},p_0'}}\|u_1+u_3\|_{L^{p_0}(\Omega_{\tau,\tau+1})}^{1-\alpha}
\end{equation}
thanks to \eqref{quasi-uniform}.

\medskip
To show the uniform in $\tau$ boundedness of $\norm{u_1+u_3}_{L^{p_0}(\Omega_{\tau,\tau+1})}$ it will be enough to show the boundedness of the sequence
$$a_n=\norm{u_1+u_3}_{L^{p_0}(\Omega_{n,n+1})}$$
where $n\in \N \cup\br{0}$, as
$$\norm{u_1+u_3}_{L^{p_0}(\Omega_{\tau,\tau+1})}\leq a_{\rpa{\tau}}+a_{\rpa{\tau}+1}.$$
We start by noticing that \eqref{eq:some_L_p_estimates_proof_VI}, the fact that $\phi_\tau\vert_{\rpa{\tau+1,\tau+2}}\equiv 1$, and the fact that $\phi_\tau,u_1$ and $u_3$ are non-negative, imply that
\begin{equation}\nonumber 
a_{n+1} \leq \|\varphi_n (u_1+u_3)\|_{L^{p_0}(\Omega_{n,n+2})} \leq \frac{C}{1-\frac{|d_1 - d_3|}{d_1+d_3}C_{\rm{mr}, p_0'}}\|u_1+u_3\|_{L^{p_0}(\Omega_{n,n+1})}^{1-\alpha}=\mathcal{C} a_n^{1-\alpha}.
\end{equation}
Thus if $a_n \leq a_{n+1}$ we must have that
$$a_n \leq \mathcal{C}a_n^{1-\alpha},$$
resulting in $a_n \leq \mathcal{C}^{\frac{1}{\alpha}}.$ From this we infer that
$$a_{n+1} \leq \mathcal{C}\pa{\mathcal{C}}^{\frac{1-\alpha}{\alpha}}.$$
At this point we recall Lemma \ref{elementary}, and conclude our claim. Thus, there exists a constant $C_0$ that depends only on $p_0^\prime$, $d_1$, $d_2$, $\Omega$ and $M_{13}$ such that 
$$\sup_{\tau \geq 0} \norm{u_1+u_3}_{L^{p_0}(\Omega_{\tau,\tau+1})} \leq C_0.$$
This together with the non-negativity of $\br{u_i}_{i=1,3}$ finishes the proof of Lemma \ref{lem:some_L_p_estimaes_for_123} for $u_1$ and $u_3$.
\end{proof}

\begin{remark}\label{rem:dimension_usage}
It is worthwhile to note that in the above proof the only point where we have used the fact that $N\geq 3$ is in using \eqref{item:less} from Lemma \ref{cor:embedding_for_N} for $p_0^\prime $ which was less than $\frac{N+2}{2}$. In dimensions $1$ and $2$, there is a chance that $p^\prime_0$ will not satisfy this condition, but according to \eqref{item:equal_N} and \eqref{item:greater_N} from the same lemma, we can still get inequality \eqref{eq:some_L_p_estimates_proof_I} {\it for any $q<+\infty$}. As such, the result remains valid when $N=1,2$, as long as condition \eqref{quasi-uniform} is satisfied.
\end{remark}
Now that we have an initial $L^{p_0}\pa{\Omega_{\tau,\tau+1}}$ bound on $u_{i_0}$ and $u_3$, with $i_0=1$ or $i_0=2$, depending on which index satisfies \eqref{quasi-uniform}, we continue to explore the interconnections of the solutions. In particular, we show that any uniform in $\tau$ $L^q\pa{\Omega_{\tau,\tau+1}}$ estimate for $u_3$ can be transferred to $u_1$ and $u_2$. This is, in a sense, a generalisation of \cite[Lemma 33.3]{QS07} in which we exploit the uniform $L^1(\Omega)$ bounds.
\begin{lemma}\label{lem:second_duality}
If $\sup_{\tau\in \mathbb N}\|u_3\|_{L^p(\Omega_{\tau,\tau+1})} \leq C_p$ for some $1<p<\infty$, then
\begin{equation}\nonumber
\sup_{\tau\in \mathbb N}\|u_i\|_{L^p(\Omega_{\tau,\tau+1})} \leq \mathcal{C}_p \quad \text{ for } \quad i=1,2,
\end{equation}
where $\mathcal{C}_p$ is a constant that depends only on $C_p$, $p$, $N$, $\Omega$, $d_1$, $d_3$, $M_{ij}$ and $\br{\norm{u_i}_{L^p(\Omega_{{0,1}})}}_{i=1,2}$.
\end{lemma}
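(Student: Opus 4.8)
The plan is to run the cutoff--duality scheme of Lemma~\ref{lem:some_L_p_estimaes_for_123}, with the ``quasi-uniform smallness'' device replaced by the hypothesised uniform control of $u_3$; mass conservation will supply the $L^1$-interpolation step. I treat $u_1$; the bound for $u_2$ is obtained verbatim after replacing $d_1$ by $d_2$. Set $v = u_1 + u_3 \geq u_1 \geq 0$, which by \eqref{eq:sys} solves
\begin{equation*}
\partial_t v - \Delta\!\pa{d_1 u_1 + d_3 u_3} = 0 \quad \text{in } \Omega\times(0,\infty), \qquad \nabla v\cdot\nu = 0 \ \text{ on } \partial\Omega\times(0,\infty).
\end{equation*}
Fix $\tau\in\mathbb N$, let $\varphi_\tau$ be the temporal cutoff from the proof of Lemma~\ref{lem:some_L_p_estimaes_for_123} (smooth, $\varphi_\tau\equiv 0$ near $t=\tau$, $\varphi_\tau\equiv 1$ on $[\tau+1,\tau+2]$, $\varphi_\tau'$ supported in $[\tau,\tau+1]$, $|\varphi_\tau'|\leq C$), and for an arbitrary $0\leq\theta\in L^{p'}(\Omega_{\tau,\tau+2})$ let $\psi_\theta$ solve \eqref{heat_equation_bw} on $\Omega_{\tau,\tau+2}$ with diffusion coefficient $d_1$, final datum $\psi_\theta(\cdot,\tau+2)=0$, and source $-\theta$. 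Integrating by parts in time (the boundary contributions vanish since $\psi_\theta(\cdot,\tau+2)=0$ and $\varphi_\tau(\tau)=0$) and then in space (homogeneous Neumann data), and using $d_1 u_1 + d_3 u_3 - d_1 v = (d_3 - d_1)u_3$, yields the identity
\begin{equation*}
\int_\tau^{\tau+2}\!\!\int_\Omega \varphi_\tau\, v\, \theta \,dx\,dt \;=\; \int_\tau^{\tau+2}\!\!\int_\Omega \varphi_\tau'\, v\, \psi_\theta \,dx\,dt \;+\; (d_3 - d_1)\int_\tau^{\tau+2}\!\!\int_\Omega \varphi_\tau\, u_3\, \Delta\psi_\theta \,dx\,dt.
\end{equation*}

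Next I estimate the two terms on the right. For the last one, Hölder's inequality, the maximal regularity bound $\|\Delta\psi_\theta\|_{L^{p'}(\Omega_{\tau,\tau+2})}\leq d_1^{-1}C_{\mathrm{mr},p'}\|\theta\|_{L^{p'}(\Omega_{\tau,\tau+2})}$ from Lemma~\ref{lem_mr_rescaled}, and the hypothesis (applied at $\tau$ and at $\tau+1$) $\|u_3\|_{L^p(\Omega_{\tau,\tau+2})}\leq 2C_p$ give a bound $C\,\|\theta\|_{L^{p'}(\Omega_{\tau,\tau+2})}$ with $C$ depending only on $C_p$, $p$, $N$, $\Omega$, $d_1$, $d_3$. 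For the first term I select $1<q'<p$ whose Hölder conjugate $q$ satisfies $q<\frac{(N+2)p'}{N+2-2p'}$ if $p'<\frac{N+2}{2}$ and is arbitrary finite otherwise --- this is possible exactly as in Lemma~\ref{lem:some_L_p_estimaes_for_123}, since $p'<\frac{(N+2)p'}{N+2-2p'}$ --- so that Lemma~\ref{cor:embedding_for_N} gives $\|\psi_\theta\|_{L^q(\Omega_{\tau,\tau+2})}\leq C\|\theta\|_{L^{p'}(\Omega_{\tau,\tau+2})}$. Since $\varphi_\tau'$ is supported in $[\tau,\tau+1]$, Hölder's inequality, the interpolation $\|v\|_{L^{q'}(\Omega_{\tau,\tau+1})}\leq\|v\|_{L^1(\Omega_{\tau,\tau+1})}^{1-\beta}\|v\|_{L^p(\Omega_{\tau,\tau+1})}^{\beta}$ with $\beta=p'(q'-1)/q'\in(0,1)$, and the mass bound $\|v\|_{L^1(\Omega_{\tau,\tau+1})}\leq M_{13}$ from Remark~\ref{rem:mass_bounds} bound the first term by $C\,M_{13}^{1-\beta}\|v\|_{L^p(\Omega_{\tau,\tau+1})}^{\beta}\|\theta\|_{L^{p'}(\Omega_{\tau,\tau+2})}$. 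Taking the supremum over $\theta$ with $\|\theta\|_{L^{p'}(\Omega_{\tau,\tau+2})}\leq 1$, and using $\varphi_\tau\equiv 1$ on $[\tau+1,\tau+2]$ together with $v\geq 0$, gives
\begin{equation*}
\|v\|_{L^p(\Omega_{\tau+1,\tau+2})} \;\leq\; \|\varphi_\tau v\|_{L^p(\Omega_{\tau,\tau+2})} \;\leq\; C_1\,\|v\|_{L^p(\Omega_{\tau,\tau+1})}^{\beta} + C_2,
\end{equation*}
where $C_1=C_1(p,N,\Omega,M_{13})$ and $C_2=C_2(C_p,p,N,\Omega,d_1,d_3)$.

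Writing $a_n=\|v\|_{L^p(\Omega_{n,n+1})}$, the last inequality reads $a_{n+1}\leq C_1 a_n^{\beta}+C_2$. Hence whenever $a_n\leq a_{n+1}$ we get $a_n\leq C_1 a_n^{\beta}+C_2$, which, since $\beta<1$, forces $a_n\leq C_3$ for an explicit $C_3=C_3(C_1,C_2,\beta)$, and therefore also $a_{n+1}\leq C_1 C_3^{\beta}+C_2$. Lemma~\ref{elementary} then yields $\sup_{n}a_n\leq\max\{a_0,\,C_1 C_3^\beta + C_2\}$, and since $a_0=\|v\|_{L^p(\Omega_{0,1})}\leq\|u_1\|_{L^p(\Omega_{0,1})}+C_p$ and $u_1\leq v$ pointwise, this gives $\sup_{\tau\in\mathbb N}\|u_1\|_{L^p(\Omega_{\tau,\tau+1})}\leq\mathcal C_p$ with $\mathcal C_p$ depending only on $C_p$, $p$, $N$, $\Omega$, $d_1$, $d_3$, the masses $M_{ij}$, and $\|u_1\|_{L^p(\Omega_{0,1})}$; repeating with $v=u_2+u_3$ and diffusion $d_2$ handles $u_2$. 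The only genuinely delicate point is the simultaneous choice of the interpolation exponent $q'$ and the parabolic-embedding exponent $q$, and the check that this is feasible for every $1<p<\infty$ in every dimension via the three cases of Lemma~\ref{cor:embedding_for_N}; everything else is the bookkeeping of the cutoff--duality scheme already carried out in Lemma~\ref{lem:some_L_p_estimaes_for_123}.
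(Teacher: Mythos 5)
Your proof is correct and follows essentially the same cutoff--duality scheme as the paper: your identity for $\varphi_\tau v$ with $v=u_1+u_3$ is just the paper's identity for $\varphi_\tau u_1$ with the term $\int\varphi_\tau u_3\theta$ moved to the other side, and the subsequent estimates (maximal regularity on the $(d_3-d_1)u_3\Delta\psi_\theta$ term, parabolic embedding plus $L^1$--$L^p$ interpolation on the $\varphi_\tau'$ term, and the $a_{n+1}\leq C_1a_n^\beta+C_2$ bootstrap via Lemma \ref{elementary}) coincide with the paper's. The key observation — that the hypothesis on $u_3$ makes the smallness condition \eqref{quasi-uniform} unnecessary because $|d_3-d_1|$ multiplies only $u_3$ — is exactly the one the paper exploits.
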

\begin{proof}
The proof follows similar ideas to the proof of Lemma \ref{lem:some_L_p_estimaes_for_123}. Once again, we choose a function $\phi_\tau$, which is a shift by $\tau$ of a $C^\infty$ function such that $\phi(0)=0$, $0\leq \phi \leq 1$, and $\phi \vert_{[1,\infty)}\equiv 1$. Let $\psi_{\theta}$ be the solution to \eqref{heat_equation_bw}, with $d=d_1$ and $\theta\in L^{p^\prime}\pa{\Omega_{\tau,\tau+2}}$, where $p^\prime$ is the H\"older conjugate of $p$. Then, using integration by parts, the fact that $\phi_\tau(\tau)=0$, $\psi_\theta\pa{\tau+2,x}=0$ and the equations for $u_1$ and $u_3$ from \eqref{eq:sys}, we find that
\begin{align*}
&\int_{\tau}^{\tau+2}\int_{\Omega}(\varphi_\tau(t) u_1(x,t))\theta(x,t) dxdt = -\int_{\tau}^{\tau+2}\int_{\Omega}(\varphi_\tau(t) u_1(x,t))(\partial_t\psi_\theta(x,t) + d_1\Delta \psi_\theta(x,t))dxdt\\
&= \int_{\tau}^{\tau+2}\int_{\Omega}\psi_{\theta}(x,t)(\partial_t(\varphi_\tau(x,t) u_1(x,t)) - d_1\Delta(\varphi_\tau(t) u_1(x,t)))dxdt\\
&=\int_{\tau}^{\tau+2}\int_{\Omega}\psi_{\theta}(x,t)(\varphi_\tau^\prime(t) u_1(x,t) +\phi_\tau(t)\pa{ u_3(x,t)u_4(x,t)-u_1(x,t)u_2(x,t)})dxdt\\
&=\int_{\tau}^{\tau+2}\int_{\Omega}\psi_{\theta}(x,t)(\varphi_\tau^\prime(t) u_1(x,t) -\phi_\tau(t)\pa{\partial_t u_3(x,t) - d_3\Delta u_3(x,t)})dxdt\\
&= \int_{\tau}^{\tau+2}\int_{\Omega}\psi_{\theta}(x,t)(\varphi_\tau^\prime(t) \pa{u_1(x,t)+u_3(x,t)} - \partial_t(\varphi_\tau(t) u_3(x,t)) + d_3\Delta (\varphi_\tau(t) u_3(x,t)))dxdt\\
&= \int_{\tau}^{\tau+2}\int_{\Omega}\psi_\theta(x,t) \varphi_\tau^\prime(t) (u_1(x,t) + u_3(x,t))dxdt \\
&\;\;\;\; + \int_{\tau}^{\tau+2}\int_{\Omega}\phi_\tau(t)u_3(x,t)(\underbrace{\partial_t \psi_\theta(x,t) + d_3\Delta \psi_\theta(x,t)}_{\pa{d_3-d_1}\Delta \psi_\theta(x,t) -\theta(x,t)})dxdt\\
&= \int_{\tau}^{\tau+2}\int_{\Omega}\psi_\theta(x,t) \varphi_\tau^\prime(t) (u_1(x,t) + u_3(x,t))dxdt - \int_{\tau}^{\tau+2}\int_{\Omega}\phi_\tau(t)u_3(x,t)\theta(x,t) dxdt\\
&\quad + (d_3 - d_1)\int_{\tau}^{\tau+2}\int_{\Omega}\phi_{\tau}(t)u_3(x,t)\Delta \psi_\theta(x,t) dxdt= \mathcal{I}+\mathcal{II}+\mathcal{III}.
\end{align*}
Using H\"older's inequality, and Lemma \ref{lem_mr_rescaled} we find that
\begin{equation}\label{eq:II_estimation}
\abs{\mathcal{II}}\leq \|\phi_\tau u_3\|_{L^p(\Omega_{\tau,\tau+2})}\|\theta\|_{L^{p'}(\Omega_{\tau,\tau+2})}\leq \| u_3\|_{L^p(\Omega_{\tau,\tau+2})}\|\theta\|_{L^{p'}(\Omega_{\tau,\tau+2})}
\end{equation}
and
\begin{equation}\label{eq:III_estimation}
\begin{split}
\abs{\mathcal{III}} \leq C&\|\phi_\tau u_3\|_{L^p(\Omega_{\tau,\tau+2})}\|\Delta \psi_{\theta}\|_{L^{p'}(\Omega_{\tau,\tau+2})}\\
&\leq C\|u_3\|_{L^p(\Omega_{\tau,\tau+2})}\|\theta\|_{L^{p'}(\Omega_{\tau,\tau+2})},
\end{split}
\end{equation}
where the constants $C$ depends only on $p$, $d_1$, $d_3$, $\Omega$ and $N$. To estimate $\mathcal{I}$ we choose $q < p$ such that its H\"older conjugate, $q^\prime$, satisfies 
$$q^\prime < \begin{cases} \frac{(N+2)p'}{N+2-2p'} &\text{ if } \quad p^\prime < \frac{N+2}{2} \\ p^\prime  &\text{ if } \quad p^\prime \geq \frac{N+2}{2} \end{cases},$$
and with the help of Lemma \ref{cor:embedding_for_N} we conclude that
\begin{equation}\nonumber
\|\psi_{\theta}\|_{L^{q^\prime}(\Omega_{\tau,\tau+2})} \leq C\|\theta\|_{L^{p'}(\Omega_{\tau,\tau+2})}.
\end{equation}
From the above we see that
\begin{align*}
\abs{\mathcal{I}} &\leq C\|\psi_\theta\|_{L^{q'}(\Omega_{\tau,\tau+1})}\pa{\|u_1\|_{L^q(\Omega_{\tau,\tau+1})} + \|u_3\|_{L^q(\Omega_{\tau,\tau+1})}}\\
	&\underset{p>q}{\leq} C\|\theta\|_{L^{p'}(\Omega_{\tau,\tau+2})}\pa{\|u_1\|_{L^q(\Omega_{\tau,\tau+1})} + \|u_3\|_{L^p(\Omega_{\tau,\tau+1})}},
\end{align*}
where we have used the fact that $\phi_\tau^\prime \vert_{[\tau+1,\infty)}=0$. Much like the proof of the previous lemma, since $1<q<p$ we can find $0<\beta<1$ such that the interpolation inequality
$$\|u_1\|_{L^q(\Omega_{\tau,\tau+1})} \leq \|u_1\|_{L^1(\Omega_{\tau,\tau+1})}^{\beta}\|u_1\|_{L^p(\Omega_{\tau,\tau+1})}^{1-\beta} \leq M_{13}^\beta \|u_1\|_{L^p(\Omega_{\tau,\tau+1})}^{1-\beta}$$
is valid. Thus, we find that
\begin{equation}\label{eq:I_estimation}
\abs{\mathcal{I}} \leq C_1\|\theta\|_{L^{p'}(\Omega_{\tau,\tau+2})}\left(\|u_1\|_{L^p(\Omega_{\tau,\tau+1})}^{1-\beta} + \|u_3\|_{L^p(\Omega_{\tau,\tau+2})}\right),
\end{equation}
with $C_1$ depending on $M_{13}$, as well as the other parameters of the problem. Combining \eqref{eq:II_estimation}, \eqref{eq:III_estimation} and \eqref{eq:I_estimation} we obtain
\begin{equation}\nonumber
\int_{\tau}^{\tau+2}\int_{\Omega}(\varphi_\tau(t) u_1(x,t))\theta(x,t) dxdt \leq C_1\|\theta\|_{L^{p'}(\Omega_{\tau,\tau+2})}\pa{\|u_1\|_{L^p(\Omega_{\tau,\tau+1})}^{1-\beta} + \|u_3\|_{L^p(\Omega_{\tau,\tau+2})}},
\end{equation}
which implies by duality that
\begin{equation}\label{eq:norm_of_phi_u_1}
\|\varphi_\tau u_1\|_{L^p(\Omega_{\tau,\tau+2})} \leq C_1\pa{\|u_1\|_{L^p(\Omega_{\tau,\tau+1})}^{1-\beta} + \|u_3\|_{L^p(\Omega_{\tau,\tau+2})}}.
\end{equation}
Again, like in the proof of the previous lemma, it is enough to find a bound for the sequence
$$a_n=\|u_1\|_{L^p(\Omega_{n,n+1})}$$
for all $n\in \N\cup\br{0}$ for which $a_{n+1}\geq a_n$.
Using the fact that $\phi_{\tau}\vert_{[\tau+1,\infty)}\equiv 1$ and the non-negativity of $u_1$ we see that \eqref{eq:norm_of_phi_u_1} implies that 
\begin{equation}\nonumber
a_{n+1} \leq \|\varphi_n u_1\|_{L^p(\Omega_{n,n+2})} \leq  C_1\pa{a_n^{1-\beta} + 2\sup_{\tau\in\N}\|u_3\|_{L^p(\Omega_{\tau,\tau+1})}}.
\end{equation}
If, in addition, $a_{n+1}\geq a_n$ then since for any $0<\beta<1$ and $a,b>0$
$$ab^{1-\beta} \leq \beta a^{\frac{1}{\beta}}+\pa{1-\beta}b$$
we see that
\begin{equation}\nonumber
a_{n+1} \leq C_1 a_{n+1}^{1-\beta} + 2C_1C_p \leq \pa{1-\beta}a_{n+1}+\pa{\beta C_1^{\frac{1}{\beta}}+2C_1C_p}
\end{equation}
from which the independent in $n$ bound
$$a_{n+1}\leq \frac{\beta C_1^{\frac{1}{\beta}}+2C_1C_p}{\beta}$$
arises. The proof is thus complete for $u_1$, and the exact same considerations show the result for $u_2$.
\end{proof}
\begin{remark}\label{rem:not_working_for_infinity}
In general, Lemma \ref{lem:second_duality} is not applicable for $p=\infty$, as our embedding theorems do not always cover this case.
\end{remark}
\begin{lemma}\label{lem:L_infty_bound_on_u_3}
Assume that condition \eqref{quasi-uniform} holds. Then 
\begin{equation}\label{eq:L_infty_bound_on_u_3}
\sup_{t\geq 0}\|u_3(t)\|_{L^\infty(\Omega)}<+\infty.
\end{equation}
Moreover, for any $1\leq p <\infty$, there exists $C_p>0$, that depends only on the parameters of the problem and not $\tau$, such that
\begin{equation}\label{eq:Lp_bounds_on_u_1_u_2_u_3}
\max_{i=1,2,3}\sup_{\tau\geq 0}\|u_i\|_{L^p(\Omega_{\tau,\tau+1})} \leq C_p.
\end{equation}
\end{lemma}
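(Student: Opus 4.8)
The plan is to run the same bootstrap argument used in the proof of Proposition~\ref{global3D}, but now on the sliding cylinders $\Omega_{\tau,\tau+1}$, keeping careful track of the fact that all constants are independent of $\tau$. The two engines of the iteration are already in place: Lemma~\ref{lem:some_L_p_estimaes_for_123} furnishes the starting point $\sup_{\tau\geq 0}\|u_{i_0}\|_{L^{p_0}(\Omega_{\tau,\tau+1})}+\sup_{\tau\geq 0}\|u_3\|_{L^{p_0}(\Omega_{\tau,\tau+1})}\leq C_0$ with $p_0>\frac{N+2}{2}$ and $i_0\in\{1,2\}$, and Lemma~\ref{lem:second_duality} upgrades any uniform-in-$\tau$ bound on $u_3$ in $L^p(\Omega_{\tau,\tau+1})$ to the analogous bound on $u_1$ and $u_2$. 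Applying Lemma~\ref{lem:second_duality} once to the $L^{p_0}$ bound on $u_3$ already gives $\sup_{\tau\geq 0}\|u_i\|_{L^{p_0}(\Omega_{\tau,\tau+1})}\leq C$ for $i=1,2,3$.

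Next I would establish the ``one-step'' smoothing estimate for $u_3$ in this uniform-in-$\tau$ setting: assuming $\sup_{\tau}\|u_i\|_{L^{s}(\Omega_{\tau,\tau+1})}\leq C$ for $i=1,2,3$ and some $s$, set $g:=u_1u_2$, which lies in $L^{s/2}(\Omega_{\tau,\tau+2})$ with norm bounded uniformly in $\tau$. Multiplying the differential inequality $\partial_t u_3-d_3\Delta u_3\leq u_1u_2$ by an increasing time cut-off $\varphi_\tau(t)=\varphi(t-\tau)$ with $\varphi(0)=0$, $\varphi\equiv 1$ on $[1,\infty)$, $0\le\varphi\le 1$, the function $w:=\varphi_\tau u_3$ satisfies $\partial_t w-d_3\Delta w\le \varphi_\tau' u_3+\varphi_\tau u_1u_2=:\widetilde g$, with homogeneous Neumann data and zero initial data at $t=\tau$; since $\varphi_\tau'\ge 0$ is supported in $[\tau,\tau+1]$, we have $\widetilde g\ge 0$ and $\widetilde g\in L^{s/2}(\Omega_{\tau,\tau+2})$ with norm bounded uniformly in $\tau$. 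By the comparison principle $w$ is dominated by the solution of the corresponding forward heat equation with source $\widetilde g$ and zero initial data, and by Lemma~\ref{cor:embedding_for_N} (applied to the time-reversed problem exactly as in its proof, so that the constant depends only on the window length $2$, hence not on $\tau$) this solution, and therefore $u_3=w$ on $(\tau+1,\tau+2)$, lies in $L^{s'}(\Omega_{\tau+1,\tau+2})$ with a uniform bound for every $s'<\frac{(N+2)(s/2)}{N+2-s}$ when $s<N+2$, and for every $s'<\infty$ when $s\ge N+2$. Relabelling then yields $\sup_{\tau}\|u_3\|_{L^{s'}(\Omega_{\tau,\tau+1})}\le C$, and Lemma~\ref{lem:second_duality} transfers this bound to $u_1$ and $u_2$.

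With this building block, I would iterate exactly as in Proposition~\ref{global3D}: starting from $p_0>\frac{N+2}{2}$ put $p_{n+1}=\frac{(N+2)p_n/2}{N+2-p_n}$ while $p_n<N+2$; the monotonicity argument given there shows $\{p_n\}$ is strictly increasing and cannot remain below $N+2$, so after finitely many steps some $p_{n_0}\ge N+2$. One further application of the building block, in which $u_1u_2$ now lies uniformly in $L^{q}(\Omega_{\tau,\tau+1})$ for $q$ as large as we wish, in particular $q>\frac{N+2}{2}$, combined with the comparison principle and Lemma~\ref{cor:embedding_for_N}(iii), gives $\sup_{\tau}\|u_3\|_{L^\infty(\Omega_{\tau,\tau+1})}\le C$; together with the finite bound on $\|u_3\|_{L^\infty(\Omega_{0,1})}$ coming from the global classical solution of Proposition~\ref{global3D}, this is precisely \eqref{eq:L_infty_bound_on_u_3}. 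Estimate \eqref{eq:Lp_bounds_on_u_1_u_2_u_3} then follows at once: for $u_3$ it is immediate from the $L^\infty$ bound and $|\Omega_{\tau,\tau+1}|=|\Omega|$, and for $u_1,u_2$ it is one last application of Lemma~\ref{lem:second_duality} at the prescribed exponent $p$ (which, by Remark~\ref{rem:not_working_for_infinity}, genuinely needs $p<\infty$, exactly the range claimed).

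The main obstacle I anticipate is the bookkeeping in the cut-off step: ensuring the source $\widetilde g$ stays nonnegative so that the comparison principle applies to $w$ rather than directly to $u_3$, verifying that every constant produced by Lemma~\ref{cor:embedding_for_N} depends only on the fixed window length and not on $\tau$ (which is what makes the estimates uniform), and checking that the finitely many bootstrap exponents indeed terminate at a value exceeding $\frac{N+2}{2}$ — the single place where the hypothesis $p_0>\frac{N+2}{2}$, and hence condition \eqref{quasi-uniform}, is used.
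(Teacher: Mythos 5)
Your proposal is correct and follows essentially the same route as the paper: the sliding-window bootstrap with the increasing cut-off $\varphi_\tau$, the comparison principle combined with the (time-reversed) maximal-regularity embedding of Lemma \ref{cor:embedding_for_N} on windows of fixed length $2$, and Lemma \ref{lem:second_duality} to transfer each new bound from $u_3$ to $u_1,u_2$, terminating once the exponent passes $N+2$. The only point you gloss over is the endpoint case $p_{n_0}=N+2$ exactly, where each stage only yields $L^{p_n-\eta}$ bounds; the paper resolves this by perturbing $p_0$ slightly so that the iteration lands strictly above $N+2$ (one can alternatively note that part (i) of Lemma \ref{cor:embedding_for_N} with $s=N+2-\eta$ already gives arbitrarily large exponents as $\eta\to 0$), but this is a minor bookkeeping issue rather than a gap.
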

\begin{proof}
Using the time cut off function $\phi_\tau$, defined in the Proof of Lemma \ref{lem:some_L_p_estimaes_for_123}, and the non-negativity of the solutions, we see that
\begin{equation}\label{eq_u3}
\begin{aligned}
\partial_t(\varphi_\tau(t) u_3(x,t)) &- d_3\Delta(\varphi_\tau(t) u_3(x,t)) \\
&=\phi^\prime_{\tau}(t)u_3(x,t)+\phi_\tau(t)\pa{u_1(x,t)u_2(x,t)-u_3(x,t)u_4(x,t)}\\
&\leq \varphi_\tau^\prime(t) u_3(x,t) + \varphi_\tau(t) u_1(x,t)u_2(x,t):=g(x,t).
\end{aligned}
\end{equation}
and
$$\nabla (\varphi_\tau(t) u_3(x,t))\cdot \nu(x)\vert_{x\in \partial \Omega} = 0, \quad \varphi_\tau(\tau) u_3(x,\tau) = 0.$$
This implies that $\phi_\tau(x,t)u_3(x,t)$ is a sub-solution to the heat equation
\begin{equation}\label{eq:sub_sol_heat}
\begin{cases}
\partial_t \psi(x,t) - d_3\Delta \psi(x,t) = g(x,t), &(x,t)\in \Omega_{\tau,\tau+2},\\
\nabla \psi(x,t) \cdot \nu(x) = 0, &(x,t)\in \partial\Omega\times \pa{\tau,\tau+2},\\
\psi(x,\tau) = 0, &x\in\Omega.
\end{cases}
\end{equation}
Lemma \ref{lem:some_L_p_estimaes_for_123} assures us that there exists $C$, depending only on the parameters of the problem and not $\tau$, and $p_0>2$ such that
\begin{equation}\nonumber
\begin{gathered}
\sup_{\tau \geq 0}\|g\|_{L^{\frac{p_0}{2}}(\Omega_{\tau,\tau+2})} \leq C\sup_{\tau \geq 0}\norm{u_3}_{L^{\frac{p_0}{2}}(\Omega_{\tau,\tau+2})}+\sup_{\tau \geq 0}\norm{u_1u_2}_{L^{\frac{p_0}{2}}(\Omega_{\tau,\tau+2})}\\
\leq C\abs{\Omega}^{\frac{1}{p_0}}\sup_{\tau \geq 0}\norm{u_3}_{L^{p_0}(\Omega_{\tau,\tau+2})}+\sup_{\tau \geq 0}\norm{u_1}_{L^{p_0}(\Omega_{\tau,\tau+2})}\norm{u_2}_{L^{p_0}(\Omega_{\tau,\tau+2})}\leq C.
\end{gathered}
\end{equation}
The comparison principle for heat equation assures us that since $\phi_\tau u_3$ is a sub-solution to \eqref{eq:sub_sol_heat},
\begin{equation*}
	\varphi_\tau (t)u_3(x,t) \leq \psi(x,t) \quad \text{for all } \quad (x,t)\in\Omega_{\tau,\tau+2}.
\end{equation*}
From Lemma \ref{cor:embedding_for_N}, we conclude that if $p_0<{N+2}$ then for any $\eta>0$ such that
$$p_1-\eta=\frac{(N+2)\frac{p_0}{2}}{N+2-2\frac{p_0}{2}}-\eta >1$$
we have that $\psi\in L^{p_1-\eta}\pa{\Omega_{\tau,\tau+2}}$, and consequently $\phi_\tau u_3\in L^{p_1-\eta}\pa{\Omega_{\tau,\tau+2}}$. It is important to note that the embedding constant \emph{is independent of $\tau$}. Due to the fact that $\phi_{\tau}\vert_{[\tau+1,\infty)}\equiv 1$ we can conclude that
\begin{equation}\nonumber
\sup_{\tau \geq 0}\norm{u_3}_{L^{p_1-\eta}(\Omega_{{\tau+1,\tau+2}})} < \infty,
\end{equation}
and since $u_3\in L^\infty\pa{\Omega_{T}}$ for any $T>0$, we have that 
$$\sup_{\tau\in \N}\norm{u_3}_{L^{p_1-\eta}(\Omega_{\tau,\tau+1})}=\max\pa{\norm{u_3}_{L^{p_1-\eta}(\Omega_{1})},\sup_{\tau \geq 0}\norm{u_3}_{L^{p_1-\eta}(\Omega_{{\tau+1,\tau+2})}} }<\infty.$$
From Lemma \ref{lem:second_duality} we know that
\begin{equation}
\sup_{\tau\in \N}\norm{u_i}_{L^{p_1-\eta}(\Omega_{\tau,\tau+1})}<\infty,\quad i=1,2
\end{equation}
which in turn implies that $g\in L^{\frac{p_1-\eta}{2}}\pa{\Omega_{\tau,\tau+2}}$ with a uniform in $\tau$ bound. Repeating this procedure, we define $p_{k+1}=\frac{(N+2)\frac{p_k}{2}}{N+2-p_k}$ when $p_k<N+2$, and find for any sufficiently small $\eta$ 
$$\sup_{\tau\in \N}\norm{u_3}_{L^{p_{k+1}-\eta}(\Omega_{\tau,\tau+1})}<\infty.$$
Similarly to the proof of Proposition \ref{global3D}, since $p_0>\frac{N+2}{2}$, there exists $k_0\in \mathbb N$ such that
\begin{equation*}
	p_{k_0} \geq  N+2.
\end{equation*}
If the inequality is strict, then by choosing $\eta>0$ small enough for which $p_{k_0}-\eta>N+2$, we will find that
$$\sup_{\tau\in \N}\norm{u_3}_{L^{p_{k_0}-\eta}(\Omega_{\tau,\tau+1})}<\infty,$$
and the same bound will transfer to $u_1$ and $u_2$, thanks to Lemma \ref{lem:second_duality}. This will imply that $u_1u_2 \in L^{\frac{p_{k_0}-\eta}{2}}(\Omega_{\tau,\tau+2})$ with  $\frac{p_{k_0}-\eta}{2}>\frac{N+2}{2}$, and according to Lemma \ref{cor:embedding_for_N} we find that 
$$\norm{u_3}_{L^\infty(\Omega_{\tau,\tau+1})}<C,$$
for $C$ independent of $\tau$. As $u_3$ is continuous in its variables
$$\sup_{\tau\leq t\leq \tau+1}\norm{u_3(t)}_{L^\infty(\Omega)}\leq \norm{u_3}_{L^\infty(\Omega_{\tau,\tau+1})}$$
and \eqref{eq:L_infty_bound_on_u_3} is shown. The desired estimates \eqref{eq:Lp_bounds_on_u_1_u_2_u_3} for $u_1$ and $u_2$ follow again from Lemma \ref{lem:second_duality}.\\
We are only left to show that we can find $p_{k_0}$ as above. Since $p_{k+1}=\frac{(N+2)\frac{p_k}{2}}{N+2-p_k}$ implies
$$p_{k}=\frac{2(N+2)p_{k+1}}{N+2+2p_{k+1}}$$
we see that the case where $p_{k_0}=N+2$ {\it can come from only one choice of $p_0>\frac{N+2}{2}$.} Since all inequalities will remain the same if we replace $p_0$ with $\widetilde{p_0}< p_0$ such that $\widetilde{p_0} > \frac{N+2}{2}$ (as the domain is bounded), choosing $\widetilde{p_0}$ close enough to $p_0$ such that 
$$\widetilde{p_0} > \frac{2(N+2)p_{0}}{N+2+2p_{0}}$$
assures us the sequence we will construct using $\widetilde{p_0}$ will not pass through $p_0$ and as such the $\widetilde{p_{k_0}}$ we will find from this process will satisfy $\widetilde{p_{k_0}}>N+2$. The proof is thus complete.
\end{proof}
The penultimate component to prove Proposition \ref{Global3D} is the following lemma:
\begin{lemma}\label{3D_u4LN2}
Assume that condition \eqref{quasi-uniform} holds. Then there exists $\alpha \in (0,1)$ such that
	\begin{equation*}
		\norm{u_1(t)}_{L^{\frac N2}(\Omega)} + \norm{u_2(t)}_{L^{\frac N2}(\Omega)} + \norm{u_4(t)}_{L^{\frac N2}(\Omega)} \leq C(1+t)^\alpha \quad \text{ for all } \quad t\geq 0.
	\end{equation*}
\end{lemma}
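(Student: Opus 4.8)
The plan is to obtain, \emph{uniformly in the location of the time cylinder}, a linear-in-time control of $u_4$ in every $L^p(\Omega)$ with $p<\infty$ and of $u_1,u_2$ in $L^\infty(\Omega)$, and then to deduce the \emph{strictly} sub-linear $L^{N/2}$ bound purely by interpolation against the conserved $L^1$ mass. The reason one has to argue this way is that there is no dissipation to exploit for $u_4$: the representation $u_4(x,t)\le u_{4,0}(x)+\int_0^t u_1u_2(x,s)\,ds$ together with the uniform-in-$\tau$ estimates $\sup_{\tau\ge0}\|u_i\|_{L^p(\Omega_{\tau,\tau+1})}\le C_p$ ($i=1,2,3$) from Lemma~\ref{lem:L_infty_bound_on_u_3} can only yield linear growth of $\|u_4(t)\|_{L^p(\Omega)}$ once the contributions of the unit intervals are summed, and this cannot be improved in a single fixed norm; the gain must instead come from interpolating a norm that grows (at worst linearly) against one that is bounded uniformly in time.

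More precisely, I would proceed in three steps. \textbf{Step 1.} From $\partial_t u_4=u_1u_2-u_3u_4\le u_1u_2$ and $u_4\ge0$ one has $u_4(x,t)\le u_{4,0}(x)+\int_0^t u_1u_2(x,s)\,ds$; Minkowski's integral inequality, Hölder in time on each $(k,k+1)$, Cauchy--Schwarz in space, and the uniform bound $\sup_\tau\|u_i\|_{L^{2p}(\Omega_{\tau,\tau+1})}\le C_{2p}$ give
\[
\|u_4(t)\|_{L^p(\Omega)}\le\|u_{4,0}\|_{L^p(\Omega)}+\sum_{k=0}^{\lceil t\rceil-1}\|u_1\|_{L^{2p}(\Omega_{k,k+1})}\|u_2\|_{L^{2p}(\Omega_{k,k+1})}\le C_p(1+t)
\]
for every $1\le p<\infty$, with $C_p$ independent of $t$. \textbf{Step 2.} For $i=1,2$ the inequality $\partial_t u_i-d_i\Delta u_i\le u_3u_4$, the time cut-off $\phi_\tau$ used in the proof of Lemma~\ref{lem:some_L_p_estimaes_for_123}, and the comparison principle show that $\phi_\tau u_i$ is dominated on $\Omega_{\tau,\tau+2}$ by the solution of a heat equation with zero initial datum and non-negative source; using $\Lambda:=\sup_t\|u_3(t)\|_{L^\infty(\Omega)}<\infty$ (Lemma~\ref{lem:L_infty_bound_on_u_3}) and Step~1, that source has $L^p(\Omega_{\tau,\tau+2})$-norm $\le C_p(1+\tau)$. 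Fixing $p>(N+2)/2$ and applying the parabolic smoothing of Lemma~\ref{cor:embedding_for_N}(iii) --- whose constant depends only on the length ($=2$) of the time interval, hence not on $\tau$ --- yields $\|u_i\|_{L^\infty(\Omega_{\tau+1,\tau+2})}\le C(1+\tau)$, and together with the boundedness of $u_i$ on $\Omega_{0,2}$ this gives $\|u_i(t)\|_{L^\infty(\Omega)}\le C(1+t)$ for all $t\ge0$. \textbf{Step 3.} Interpolate, using the mass bounds of Remark~\ref{rem:existence_sol_no_dim}: for $i=1,2$,
\[
\|u_i(t)\|_{L^{N/2}(\Omega)}\le\|u_i(t)\|_{L^1(\Omega)}^{2/N}\,\|u_i(t)\|_{L^\infty(\Omega)}^{1-2/N}\le C(1+t)^{1-2/N},
\]
and for $u_4$, with $\theta=\tfrac1{N-1}\in(0,1)$ chosen so that $\tfrac2N=\theta+\tfrac{1-\theta}{N}$ and Step~1 taken at $p=N$,
\[
\|u_4(t)\|_{L^{N/2}(\Omega)}\le\|u_4(t)\|_{L^1(\Omega)}^{\theta}\,\|u_4(t)\|_{L^N(\Omega)}^{1-\theta}\le M_{14}^{\theta}\bigl(C_N(1+t)\bigr)^{\frac{N-2}{N-1}}\le C(1+t)^{\frac{N-2}{N-1}}.
\]
Since $N\ge3$, the exponents $1-\tfrac2N$ and $\tfrac{N-2}{N-1}$ both lie in $(0,1)$ and $1-\tfrac2N\le\tfrac{N-2}{N-1}$, so the lemma holds with $\alpha:=\tfrac{N-2}{N-1}$ (in fact $\alpha$ can be pushed down to anything above $1-2/N$ by interpolating $u_4$ against a higher $L^M$ norm, all of which grow at most linearly by Step~1).

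The main obstacle is the one flagged above: every direct estimate on $u_4$ is stuck at linear growth because $u_4$ obeys a pointwise ODE driven by $u_1u_2$, which is only uniformly integrable over unit time intervals, so summing over $[0,t]$ costs a factor $t$ and there is nothing --- no diffusion, no pointwise-surviving sign of $-u_3u_4$ --- to recover it. The resolution is to stop trying to beat linear growth in a fixed norm and to trade integrability for time decay against the conserved $L^1$ mass. The only genuine technical work is then Step~2, carried out with the by-now routine time cut-off, which both removes the uncontrolled initial traces $u_i(\cdot,\tau)$ and makes all parabolic-smoothing constants independent of $\tau$; everything else is interpolation.
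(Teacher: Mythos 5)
Your proof is correct, and while it rests on the same pillars as the paper's argument, it executes the key step for $u_4$ quite differently. The shared ingredients are: the pointwise ODE bound $u_4\le u_{4,0}+\int_0^t u_1u_2\,ds$, the uniform-in-$\tau$ cylinder estimates of Lemma \ref{lem:L_infty_bound_on_u_3}, the cut-off/comparison/maximal-regularity argument transferring information to $u_1,u_2$, and --- the decisive point --- interpolation against the conserved $L^1$ mass to break linear growth. The paper, however, beats linear growth \emph{inside} the $u_4$ estimate: it differentiates $\int_\Omega u_4^q$, splits with Young's inequality, invokes the Gronwall-type Lemma \ref{Gronwall}, and then interpolates with $L^1$ while tuning the parameters $\varepsilon,\eta$ to land on an explicit $\alpha<1$; this sublinear rate is then propagated to $\norm{u_i(t)}_{L^\infty}$, $i=1,2$. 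You instead accept linear growth everywhere ($u_4$ in every $L^p$, obtained by summing the unit-cylinder bounds; $u_1,u_2$ in $L^\infty$ via the heat comparison) and perform a single interpolation against the mass at the very end, in the one norm $L^{N/2}$ the lemma actually asserts. Your route is more elementary --- no Gronwall-type lemma, no parameter tuning, cleaner exponent bookkeeping, and an explicit $\alpha=\frac{N-2}{N-1}$ --- at the cost of giving up the paper's stronger intermediate conclusions (sublinear growth of $u_1,u_2$ in $L^\infty$ and of $u_4$ in every $L^q$, cf.\ Remark \ref{rem:N_independent}), which are not needed for the statement at hand. Two cosmetic remarks: the Cauchy--Schwarz step in your Step 1 is applied on the space--time cylinder $\Omega_{k,k+1}$, not in space alone; and in Step 2 one should take the cut-off $\phi$ nondecreasing so that the comparison source $\varphi_\tau'u_1+\varphi_\tau u_3u_4$ is non-negative before invoking Lemma \ref{cor:embedding_for_N} --- exactly as the paper does.
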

\begin{proof}
We start by noticing that due to Lemma \ref{lem:L_infty_bound_on_u_3} we can find a constant $C_p$, for any $1<p<+\infty$ such that 
$$\sup_{\tau \geq 0}\norm{u_i}_{L^p\pa{\Omega_{\tau,\tau+1}}}\leq C_p,\quad i=1,2.$$
As such, for any $t>0$
\begin{equation}\label{eq:L_p_estimation_on_omega_t_12}
\norm{u_i}^p_{L^p\pa{\Omega_{t}}}\leq \sum_{n=0}^{\rpa{t}+1}\norm{u_i}^p_{L^p\pa{\Omega_{n,n+1}}} \leq C^p_{p}\pa{2+\rpa{t}}\leq 3C^p_p\pa{1+t},\quad i=1,2.
\end{equation}
Next, using the equation for $u_4$ and the non-negativity of the solutions, we see that 
$$\partial_t u_4(x,t) \leq u_1(x,t)u_2(x,t)$$
from which, together with Young's inequality, we conclude that for any $1<q<+\infty$ and an arbitrary $\epsilon\in(0,1)$ 
	\begin{equation*}
	\begin{gathered}
		\partial_t\int_{\Omega}u_4(x,t)^qdx  \leq q\int_{\Omega}u_4(x,t)^{q-1}u_1(x,t)u_2(x,t)dx \leq \frac{q(q-1)}{q-\epsilon}\int_{\Omega}u_4(x,t)^{q-\varepsilon}dx\\
		 + \frac{q(1-\varepsilon)}{q-\varepsilon}\int_{\Omega}(u_1(x,t)u_2(x,t))^{\frac{q-\varepsilon}{1-\varepsilon}}dx
		\leq \mathcal{C}_{q,\epsilon,\Omega}\left(\int_{\Omega}u_4(x,t)^qdx \right)^{1-\frac{\varepsilon}{q}} \\
		+ \frac{1-\epsilon}{2(q-\epsilon)}\pa{\norm{u_1(t)}_{L^{\frac{2(q-\epsilon)}{1-\epsilon}}\pa{\Omega}}^{\frac{2(q-\epsilon)}{1-\epsilon}}+\norm{u_2(t)}_{L^{\frac{2(q-\epsilon)}{1-\epsilon}}\pa{\Omega}}^{\frac{2(q-\epsilon)}{1-\epsilon}}}
		\end{gathered}
	\end{equation*}
	Applying Lemma \ref{Gronwall} with $y(t) = \norm{u_4(t)}_{L^q(\Omega)}^q$ and $r = \varepsilon/q$, and using \eqref{eq:L_p_estimation_on_omega_t_12}, we find that
	\begin{equation}\label{eq:u4_Lq_bound_time}
	\begin{gathered}
		\norm{u_4(t)}_{L^q(\Omega)}^q \leq C_{q,\epsilon}\pa{\norm{u_{4,0}}_{L^q(\Omega)}^q + \norm{u_1(t)}_{L^{\frac{2(q-\epsilon)}{1-\epsilon}}\pa{\Omega_{t}}}^{\frac{2(q-\epsilon)}{1-\epsilon}}+\norm{u_2(t)}_{L^{\frac{2(q-\epsilon)}{1-\epsilon}}\pa{\Omega_{t}}}^{\frac{2(q-\epsilon)}{1-\epsilon}}+ t^{\frac{q}{\varepsilon}}}\\
		\leq \mathcal{C}_{q,\epsilon}\pa{\norm{u_{4,0}}_{L^q(\Omega)}^q + \pa{1+t}+ t^{\frac{q}{\varepsilon}}}.
		\end{gathered}
	\end{equation}
Thus, we conclude that
	\begin{equation*}
	\begin{gathered}
		\norm{u_4(t)}_{L^q(\Omega)} \leq C_{q,\epsilon}\pa{\norm{u_{4,0}}_{L^q(\Omega)} + (1+t)^\frac{1}{q}+ t^{\frac{1}{\varepsilon}} }\\
		\leq C_{q,\epsilon}\pa{1+\norm{u_{4,0}}_{L^\infty(\Omega)}\abs{\Omega}^{\frac{1}{q}} }\pa{ (1+t)^\frac{1}{q}+ t^{\frac{1}{\varepsilon}} }
		\end{gathered}
	\end{equation*}
for any $1<q<+\infty$ and $\epsilon\in (0,1)$. Since
$$\frac{1}{q}=\frac{\alpha}{4q}+\frac{1-\alpha}{1}$$ 
for $\alpha=\frac{4q-4}{4q-1}$, we have that the interpolation
$$\norm{u(t)}_{L^{q}(\Omega)}\leq \norm{u(t)}_{L^{4q}(\Omega)}^{\frac{4q-4}{4q-1}}\norm{u(t)}_{L^1(\Omega)}^{\frac{3}{4q-1}}$$
is valid for any $q> 1$. As such
$$\norm{u_4(t)}_{L^{q}(\Omega)}\leq C_{q,\epsilon}\pa{M_{2,4}}^{\frac{3}{4q-1}}\pa{1+\norm{u_{4,0}}_{L^\infty(\Omega)}\abs{\Omega}^{\frac{1}{4q}} }^{\frac{4q-4}{4q-2}}\pa{ (1+t)^{\frac{1}{4q}}+ t^{\frac{1}{\varepsilon}} }^{\frac{4q-4}{4q-1}}.$$
For any $\eta<1$ such that
$$\frac{4q-4}{4q-1}<\eta$$
we see that since $\frac{1}{4q}\frac{4q-4}{4q-1} < \frac{1}{4q}$, choosing $\epsilon=\frac{4q-4}{\eta(4q-1)}$ yields the bound 
\begin{equation}\label{eq:bound_u4_typ}
\norm{u_4(t)}_{L^{q}(\Omega)}\leq  C_{q,\alpha,u_{4,0},M_{2,4}}\pa{1+t}^{\alpha},
\end{equation}
with $\alpha=\max\pa{\frac{1}{4q},\eta}<1$. In particular, one can always choose
$$\eta= \frac{4q-1}{4q+2}$$
and in the case $q=\frac{N}{2}$ with $N\geq 3$, this translates into 
$$\alpha = \max\pa{\frac{3}{2N},\frac{2N-1}{2N+2}}.$$
We turn our attention to $u_1$ and $u_2$. We will focus only $u_1$, as it and $u_2$ solve the same type of equation, and as such the desired result, once proven, will also hold for $u_2$.\\ 
We remind ourselves that $u_1$ solves the equation
$$\partial_{t}u_1(x,t)-d_1\Delta u_1(x,t)=u_3(x,t)u_4(x,t)-u_1(x,t)u_2(x,t),$$
and we mimic the steps of Lemma \ref{lem:some_L_p_estimaes_for_123}. We start by choosing a function $\phi_\tau$, which is a shift by $\tau$ of a $C^\infty$ function such that $\phi(0)=0$, $0\leq \phi \leq 1$, and $\phi \vert_{[1,\infty)}\equiv 1$. 
Due to the non-negativity of the solutions to \eqref{eq:sys} we have that
$$\partial_{t}\pa{\phi_\tau(t)u_1(x,t)} - d_1\Delta(\varphi_\tau(t)u_1(x,t) \leq \varphi^\prime_\tau(t) u_1(x,t)+ \phi_{\tau}(t)u_3(x,t)u_4(x,t)\in  L^{q}\pa{\Omega_{\tau,T}}$$
for any $1<q<\infty$ and $0\leq \tau<T$. We can also choose $\phi$ to be increasing, so that the right hand side of the above inequality is a non-negative function. Thus, using the comparison principle for the heat equation, together with the same elements of the proof of Lemma \ref{cor:embedding_for_N} which relied on the maximum regularity principle of the heat equation, and noticing that $\phi_{\tau}u_1 \vert_{t=\tau}=0$ we find that for any $n\in\N$
$$\sup_{n+1\leq t \leq n+2}\norm{u_1(t)}_{L^\infty\pa{\Omega}}\leq \sup_{n\leq t \leq n+2}\norm{\phi_{n}(t)u_1(t)}_{L^\infty\pa{\Omega}}\leq \norm{\phi_n u_1}_{L^{\infty}\pa{\Omega_{n,n+2}}}$$ 
$$\leq C_{q_0,\Omega}\pa{\norm{\phi_n^\prime u_1}_{L^{q_0}\pa{\Omega_{n,n+2}}}+\norm{u_3u_4}_{L^{q_0}\pa{\Omega_{n,n+2}}}}$$
where $q_0>\frac{N+2}{2}$ is fixed. Lemma \ref{lem:L_infty_bound_on_u_3} assures us that 
$$\sup_{n\in\N}\norm{\phi^\prime_n u_1}_{L^{q_0}\pa{\Omega_{n,n+2}}} \leq C\pa{\sup_{n\in\N}\norm{ u_1}_{L^{q_0}\pa{\Omega_{n,n+1}}}+\sup_{n\in\N}\norm{ u_1}_{L^{q_0}\pa{\Omega_{n+1,n+2}}}}\leq C_{q_0},$$
and due to \eqref{eq:bound_u4_typ} we see that
$$\norm{u_3u_4}_{L^{q_0}\pa{\Omega_{n,n+2}}}\leq C_{q_0,\alpha}C_{q_0,\alpha,u_{4,0},M_{2,4}}\sup_{t\geq 0}\norm{u_3(t)}_{L^\infty\pa{\Omega}}\pa{\int_{n}^{n+2} \pa{1+s}^{q_0\alpha} ds}^{\frac{1}{q_0}}.$$
Thus, since $\int_n^{n+2}(1+s)^{q_0\alpha}ds\leq 2(3+n)^{q_0\alpha}$,
$$\sup_{n+1\leq t \leq n+2}\norm{u_1(t)}_{L^\infty\pa{\Omega}}\leq C_{q_0,\Omega}\pa{C_{q_0}+C_{q_0,\alpha,u_{4,0},M_{2,4}}\sup_{t\geq 0}\norm{u_3(t)}_{L^\infty\pa{\Omega}}\pa{3+n}^{\alpha}}$$
$$\leq C_{q_0,\alpha,u_{4,0},M_{2,4},\sup_{t\geq 0}\norm{u_3(t)}_{L^\infty\pa{\Omega}}}\pa{2+t}^{\alpha}.$$
As $n\in\N$ was arbitrary and
$$\sup_{0\leq t \leq 1}\norm{u_1(t)}_{L^\infty\pa{\Omega}} =\norm{u_1}_{L^\infty\pa{\Omega_{1}}}\leq \norm{u_1}_{L^\infty\pa{\Omega_{1}}}\pa{1+t}^\alpha$$
we find that
\begin{equation}\label{eq:u1_infty_bound}
\norm{u_1(t)}_{L^\infty\pa{\Omega}} \leq C_{q_0,\alpha,\br{u_{i,0}}_{i=1,\dots,4},M_{2,4}}\pa{1+t}^\alpha,
\end{equation} 
where the constant depends on $\norm{u_1}_{L^\infty\pa{\Omega_{1}}}$, which by itself depends on the initial data. From the above any $L^q\pa{\Omega}$ bound follows. As $\alpha<1$, the proof is completed.
\end{proof}

\begin{remark}\label{rem:N_independent}
It is worth to mention that while Lemma \ref{3D_u4LN2} considered $L^{\frac{N}{2}}\pa{\Omega}$, we have actually managed to prove that for any $q>1$, we can find $0<\alpha<1$ such that 
\begin{equation}\nonumber
\norm{u_4(t)}_{L^{q}(\Omega)}\leq  C_{q,\alpha,u_{4,0},M_{2,4}}\pa{1+t}^{\alpha},
\end{equation}
and
\begin{equation}\nonumber
\norm{u_i(t)}_{L^\infty\pa{\Omega}} \leq C_{\alpha,\br{u_{i,0}}_{i=1,\dots,4},M_{2,4}}\pa{1+t}^\alpha,\quad i=1,2.
\end{equation}
Moreover, the dimension $N$ played no role! 
\end{remark}
Collecting all the tools that we have developed, we can finally prove Proposition \ref{Global3D}.
\begin{proof}[Proof of Proposition \ref{Global3D} ]
This follows from Proposition \ref{global3D} (where \eqref{3D_u124_Linfty} has also been proved), Lemma \ref{lem:L_infty_bound_on_u_3} and Lemma \ref{3D_u4LN2}.
\end{proof}

\subsection{Proof of Proposition \ref{Global1D}}\label{12D_proof}
Looking at the main ingredients we used to prove  Proposition \ref{Global3D} (Proposition \ref{global3D}, Lemma \ref{lem:L_infty_bound_on_u_3} and Lemma \ref{3D_u4LN2}), we notice that there is no real dimension dependency in the proof any of them. The only dimensional dependency may come from condition \eqref{quasi-uniform}. We also note that the only difference between the conclusions of Proposition \ref{Global1D} and \ref{Global3D} is the $L^p\pa{\Omega}$ norm estimates - namely \eqref{eq:bound_u_1_u_2} and \eqref{3D_u124}. However, it is simple to see that condition \eqref{eq:bound_u_1_u_2} in Proposition \ref{Global1D} is an immediate consequence of \eqref{eq:bound_u124} and the mass conservation. Indeed, for $i\in \{1,\ldots, 4\}$,
$$\norm{u_i(t)}_{L^{1+\gamma}\pa{\Omega}} \leq \norm{u_i(t)}_{L^\infty\pa{\Omega}}^{\frac{\gamma}{1+\gamma}}\norm{u_i(t)}^{\frac{1}{1+\gamma}}_{L^{1}\pa{\Omega}}$$
$$\leq C^{\frac{\gamma}{1+\gamma}}\pa{M_{1,3}+M_{2,4}}^{\frac{1}{1+\gamma}}\pa{1+t}^{\frac{\gamma\mu}{1+\gamma}}.$$
Thus, choosing $\gamma=\frac{\epsilon}{\mu-\epsilon}$ gives the desired estimate. As \eqref{eq:bound_u124} follows from Proposition \ref{global3D}, we only truly need it and Lemma \ref{lem:L_infty_bound_on_u_3} to conclude Proposition \ref{Global1D}.

From the above discussion we conclude that in order to prove Proposition \ref{Global1D} it is enough to show that when $N=1,2$, condition \eqref{quasi-uniform} is \emph{always satisfied}.
\begin{lemma}\label{improved_duality_12D}
	For any $\omega_1, \omega_2>0$, there exists a constant $p_*<2$ such that
	\begin{equation}\label{b4}
		\frac{\abs{\omega_1-\omega_2}}{\omega_1+\omega_2}C_{\rm{mr},p_*}<1.
	\end{equation}
In particular, condition \eqref{quasi-uniform} is always satisfied.
\end{lemma}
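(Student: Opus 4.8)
The plan is to exploit the borderline exponent $p=2$, at which the maximal regularity constant satisfies $C_{\mathrm{mr},2}\le 1$, together with an interpolation argument showing that $C_{\mathrm{mr},p}$ stays arbitrarily close to $1$ for $p$ slightly below $2$. Since $\frac{\abs{\omega_1-\omega_2}}{\omega_1+\omega_2}<1$ strictly whenever $\omega_1,\omega_2>0$, this margin is exactly what is needed to obtain \eqref{b4}, and then the choice $p_0'=p_*$ will automatically satisfy the dimensional restriction in \eqref{quasi-uniform} when $N\le 2$.

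First I would establish $C_{\mathrm{mr},2}\le 1$ by a standard energy estimate on the normalised equation \eqref{normalized_diffusion}. Applying the time reversal $\wt\psi(x,t)=\psi(x,\tau+T-t)$ already used in the proof of Lemma \ref{lem_mr}, the function $\wt\psi$ solves $\partial_t\wt\psi-\Delta\wt\psi=\wt\theta$ on $\Omega_{\tau,T}$ with homogeneous Neumann conditions and $\wt\psi(\cdot,\tau)=0$. Testing this equation with $-\Delta\wt\psi$, integrating over $\Omega$, and integrating by parts (the boundary terms vanishing by the Neumann condition) gives
\[
\tfrac12\tfrac{d}{dt}\norm{\nabla\wt\psi(t)}_{L^2(\Omega)}^2+\norm{\Delta\wt\psi(t)}_{L^2(\Omega)}^2=-\int_\Omega\wt\theta(x,t)\,\Delta\wt\psi(x,t)\,dx .
\]
Integrating in time over $(\tau,T)$, discarding the nonnegative term $\tfrac12\norm{\nabla\wt\psi(T)}_{L^2(\Omega)}^2$, using $\wt\psi(\cdot,\tau)=0$, and applying the Cauchy-Schwarz inequality yields $\norm{\Delta\wt\psi}_{L^2(\Omega_{\tau,T})}\le\norm{\wt\theta}_{L^2(\Omega_{\tau,T})}$; since the change of variables is an isometry of $L^2(\Omega_{\tau,T})$, this is $\norm{\Delta\psi}_{L^2(\Omega_{\tau,T})}\le\norm{\theta}_{L^2(\Omega_{\tau,T})}$, i.e.\ $C_{\mathrm{mr},2}\le 1$.

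Next I would interpolate to exponents below $2$. Fix any $q\in(1,2)$; by Lemma \ref{lem_mr} the solution operator $\theta\mapsto\Delta\psi$ for \eqref{normalized_diffusion} is linear and bounded on $L^q(\Omega_{\tau,T})$ with norm at most $C_{\mathrm{mr},q}<\infty$, while by the previous step it has norm at most $1$ on $L^2(\Omega_{\tau,T})$. By the Riesz-Thorin interpolation theorem, for every $p\in(q,2)$ with $\tfrac1p=\tfrac{1-\vartheta}{2}+\tfrac{\vartheta}{q}$ one has $C_{\mathrm{mr},p}\le C_{\mathrm{mr},2}^{\,1-\vartheta}\,C_{\mathrm{mr},q}^{\,\vartheta}\le C_{\mathrm{mr},q}^{\,\vartheta}$, and $\vartheta=\vartheta(p)\to0$ as $p\to2^-$, so $C_{\mathrm{mr},p}\to1$ as $p\to2^-$. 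Setting $c:=\frac{\abs{\omega_1-\omega_2}}{\omega_1+\omega_2}$, positivity of $\omega_1,\omega_2$ gives $c\in[0,1)$: if $c=0$ any $p_*\in(1,2)$ works, and if $c\in(0,1)$ we may pick $p_*\in(q,2)$ close enough to $2$ that $C_{\mathrm{mr},p_*}<1/c$, which is precisely \eqref{b4}.

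Finally, for the ``in particular'' statement, fix $N\in\{1,2\}$ and apply \eqref{b4} with $\omega_1=d_1$ and $\omega_2=d_3$ (the case $i=1$; the case $i=2$ is identical), obtaining $p_*\in(1,2)$ with $\frac{\abs{d_1-d_3}}{d_1+d_3}C_{\mathrm{mr},p_*}<1$. Taking $p_0':=p_*$ we get $p_0=\frac{p_0'}{p_0'-1}>2\ge\frac{N+2}{2}$, so the full hypothesis \eqref{quasi-uniform} of Proposition \ref{Global3D} holds. The only substantive ingredients are the two standard facts $C_{\mathrm{mr},2}\le1$ and the interpolation bound $C_{\mathrm{mr},p}\le C_{\mathrm{mr},q}^{\vartheta(p)}$; everything else is bookkeeping, and I do not anticipate a genuine obstacle.
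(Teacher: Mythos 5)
Your proof is correct and follows essentially the same route as the paper: the energy estimate (testing the normalised equation with $\Delta\psi$ and using the Neumann condition) to get $C_{\rm{mr},2}\le 1$, followed by Riesz--Thorin interpolation to show $C_{\rm{mr},p_*}$ can be made arbitrarily close to $1$ for $p_*<2$, which beats the strict inequality $\frac{\abs{\omega_1-\omega_2}}{\omega_1+\omega_2}<1$. The only cosmetic difference is that you interpolate between a fixed $q\in(1,2)$ and $2$ and send $\vartheta\to 0$, whereas the paper interpolates between $2-\eta$ and $2$ and closes a short $\liminf$ argument; both are valid and yield the same conclusion, including the observation that $p_0=p_*/(p_*-1)>2\ge\frac{N+2}{2}$ for $N=1,2$.
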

\begin{proof}
	First, we show that
	\begin{equation}\label{Cmr2}
		C_{\rm{mr},2} \leq 1.
	\end{equation}
	Indeed, by multiplying \eqref{normalized_diffusion} with $\Delta \psi$ and integrating on $\Omega_{\tau,T}$, we find that 
	\begin{equation*}
\frac{1}{2}\int_{\Omega}\abs{\nabla \psi(x,\tau)}^2dx+ \norm{\Delta \psi}_{L^2(\Omega_{\tau,T})}^2 = -\int_\tau^T\int_{\Omega}\theta(x,t) \Delta \psi(x,t) dxdt$$
$$ \leq \frac 12 \norm{\Delta \psi}_{L^2(\Omega_{\tau,T})}^2 + \frac 12 \norm{\theta}_{L^2(\Omega_{\tau,T})}
	\end{equation*}
where we have used the fact that
$$\int_{\Omega_{\tau,T}}\partial_t\psi(x,t) \Delta \psi(x,t)dxdt \underset{\text{Neumann condition}}{=}-\int_{\Omega_{\tau,T}}\nabla\pa{\partial_t\psi(x,t)} \nabla\psi(x,t)dxdt$$
$$=-\frac{1}{2}\int_{\Omega_{\tau,T}}\partial_t\abs{\nabla \psi(x,t)}^2dtdx\underset{\psi(x,t)=0}{=}\int_{\Omega}\abs{\nabla \psi(x,\tau)}^2dx.$$
Thus,
	\begin{equation*}
		\norm{\Delta \psi}_{L^2(\Omega_{\tau,T})} \leq \norm{\theta}_{L^2(\Omega_{\tau,T})},
	\end{equation*}
	which proves \eqref{Cmr2}. It is then obvious that for $\omega_1,\omega_2>0$
	\begin{equation*}
		\frac{|\omega_1 - \omega_2|}{\omega_1 +\omega_2}C_{\rm{mr}, 2} < 1.
	\end{equation*}
To conclude our lemma, it will be sufficient to show that 
	\begin{equation*}
		C_{\rm{mr},2}^- := \liminf_{\eta \to 0^+}C_{\rm{mr},2-\eta} \leq C_{\rm{mr},2}.
	\end{equation*}
To see that, we follow the idea in \cite[Remark 4]{PSU17}. Let $2_\eta$ satisfy
	\begin{equation*}
		\frac{1}{2_\eta} = \frac 12 \rpa{\frac 12  + \frac{1}{2-\eta}} \quad \text{ or equivalently } \quad 2_\eta = 2 - \frac{2\eta}{4-\eta}.
	\end{equation*}
Applying the Riesz-Thorin interpolation theorem (see, for instance, \cite[Chapter 2]{Lun18}), we find that
	\begin{equation*}
		C_{\rm{mr},2_\eta} \leq C_{\rm{mr},2}^{\frac 12}C_{\rm{mr},2-\eta}^{\frac 12}.
	\end{equation*}
	This implies that
	\begin{equation*}
		C_{\rm{mr},2}^{-} \leq C_{\rm{mr},2}^{\frac 12}\pa{C_{\rm{mr},2}^{-}}^{\frac 12}
	\end{equation*}
	and therefore
	\begin{equation*}
		C_{\rm{mr},2}^{-} \leq C_{\rm{mr},2}.
	\end{equation*}
We are only left with showing that the above implies that condition \eqref{quasi-uniform} is always satisfied. Given any diffusion coefficients, let $p_0^\prime$ be the exponent $p_\ast<2$ that we found, which can depend on the coefficients. As
$$p_0=\frac{p_0^\prime}{p_0^\prime-1}=\frac{p_\ast}{p_\ast-1}=1+\frac{1}{p_\ast-1}>2$$
and $\frac{N+2}{2} \leq 2$ when $N=1,2$, the proof is complete. 
\end{proof}

\section{The interaction between the entropy and the norm bounds: Proof of the main theorems}\label{sec:proof}
Until this point we have considered two aspects of our system of equations, \eqref{eq:sys}:
\begin{itemize}
\item The entropy of the system, and the functional inequality that governs its dissipation under the flow of our system.
\item The well-posedness of the solution to the equation, and time dependent estimates on the growth of norms of interest. 
\end{itemize}
Using the second point in the above will allow us to show convergence to equilibrium in the entropic sense. However, the entropy itself controls $L^1(\Omega)$-norm, a fact that will allow us to \emph{revisit and improve} our norm estimates, which in turn will give better entropic convergence and so on and so forth. This idea of bootstraping ourselves, not only using the equations themselves, but also their interplay with the entropy, is the key to prove our main theorems.

We start with this ``entropic control'' - which is known as a Csisz\'ar-Kullback-Pinsker-type inequality, see \cite[Lemma 2.3]{FT17}.
\begin{lemma}\label{CKP}
	There exists a constant $C_{CKP}>0$ such that, for any measurable non-negative functions $\bm{u} = (u_i)_{i=1,\ldots, 4}$ who satisfy
	\begin{equation*}
		\int_{\Omega}(u_i(x)+u_{j}(x))dx = |\Omega|(u_{i,\infty}+u_{j,\infty}) \quad \text{ for } \quad i\in \{1,2\}, \; j\in \{3,4\},
	\end{equation*}
	we have
	\begin{equation*}
		H(\bm{u}|\bm{u}_\infty) \geq C_{CKP}\sum_{i=1}^4\norm{u_i - u_{i,\infty}}_{L^1(\Omega)}^2.
	\end{equation*}
	
\end{lemma}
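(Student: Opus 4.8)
The plan is to reduce the four-species estimate to the classical (un-normalised) Csisz\'ar--Kullback--Pinsker inequality, applied one species at a time. The starting point is the elementary one-variable inequality
$$x\log\pa{\frac{x}{y}} - x + y \;\geq\; \frac{\pa{x-y}^2}{2\max\{x,y\}} \;\geq\; \frac{\pa{x-y}^2}{2\pa{x+y}}, \qquad x\geq 0,\ y>0,$$
with the left-hand side understood by continuity at $x=0$. To prove it one sets $t=x/y$ and must check $t\log t - t + 1 \geq \tfrac12 (t-1)^2/\max\{t,1\}$. On $\{t\geq 1\}$ both sides vanish at $t=1$ and, comparing derivatives, this reduces to $\log t \geq \tfrac12 - \tfrac{1}{2t^2}$ for $t\geq1$, which follows by differentiating once more (the derivatives are $1/t$ and $1/t^3$, and $1/t\geq 1/t^3$ on $[1,\infty)$); on $\{0\leq t\leq 1\}$ the difference $t\log t - t + 1 - \tfrac12(t-1)^2$ has derivative $\log t - (t-1)\leq 0$, hence it is non-increasing and vanishes at $t=1$, so it is $\geq0$ on $[0,1]$.

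Next I would apply this pointwise with $x=u_i(x)$, $y=u_{i,\infty}$ and integrate, obtaining for each $i=1,\dots,4$
$$\int_{\Omega}\pa{u_i\log\pa{\frac{u_i}{u_{i,\infty}}} - u_i + u_{i,\infty}}dx \;\geq\; \frac12\int_{\Omega}\frac{\pa{u_i-u_{i,\infty}}^2}{u_i+u_{i,\infty}}\,dx.$$
An application of the Cauchy--Schwarz inequality then gives
$$\norm{u_i-u_{i,\infty}}_{L^1(\Omega)}^2 = \pa{\int_{\Omega}\frac{\abs{u_i-u_{i,\infty}}}{\sqrt{u_i+u_{i,\infty}}}\,\sqrt{u_i+u_{i,\infty}}\,dx}^{2} \leq \pa{\int_{\Omega}\frac{\pa{u_i-u_{i,\infty}}^2}{u_i+u_{i,\infty}}\,dx}\pa{\norm{u_i}_{L^1(\Omega)}+\abs{\Omega}u_{i,\infty}},$$
so that the $i$-th term of $H(\bm u|\bm u_\infty)$ is bounded below by $\norm{u_i-u_{i,\infty}}_{L^1(\Omega)}^2 / \pa{2(\norm{u_i}_{L^1(\Omega)}+\abs{\Omega}u_{i,\infty})}$.

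The only structural input that is needed is a uniform bound on these denominators, and it comes precisely from the conservation laws together with non-negativity: from $\int_\Omega(u_1+u_3)\,dx=M_{13}$ one gets $\norm{u_1}_{L^1(\Omega)}\leq M_{13}$ and $\norm{u_3}_{L^1(\Omega)}\leq M_{13}$, and likewise $\norm{u_2}_{L^1(\Omega)}\leq M_{23}$, $\norm{u_4}_{L^1(\Omega)}\leq M_{14}$. Hence $\norm{u_i}_{L^1(\Omega)}+\abs{\Omega}u_{i,\infty}\leq \mathcal M:=\max\{M_{13},M_{14},M_{23}\}+\abs{\Omega}\max_{j}u_{j,\infty}$ for every $i$, and summing the four lower bounds yields $H(\bm u|\bm u_\infty)\geq (2\mathcal M)^{-1}\sum_{i=1}^4\norm{u_i-u_{i,\infty}}_{L^1(\Omega)}^2$, i.e.\ the claim with $C_{CKP}=(2\mathcal M)^{-1}$. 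I do not expect a genuine obstacle here: the only care required is in verifying the elementary inequality and in observing that it is the \emph{pairwise} mass constraints, rather than conservation of any single mass, that make each $\norm{u_i}_{L^1(\Omega)}$ a priori bounded.
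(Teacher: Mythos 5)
Your proof is correct and complete. Note that the paper does not actually prove this lemma --- it only cites \cite[Lemma 2.3]{FT17} --- so you have supplied a genuine argument where the paper supplies a reference. Your route is a clean, direct version of the standard Csisz\'ar--Kullback--Pinsker argument: the pointwise bound $x\log(x/y)-x+y\geq (x-y)^2/\bigl(2(x+y)\bigr)$ is verified correctly on both ranges of $t=x/y$, the Cauchy--Schwarz step is standard, and you correctly identify the one structural input, namely that the \emph{pairwise} conservation laws together with non-negativity give $\norm{u_i}_{L^1(\Omega)}\leq \max\{M_{13},M_{14},M_{23}\}$ for every $i$, so the denominators are controlled by conserved quantities only. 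This differs in flavour from the proof in the cited reference, which first splits each summand via the spatial average $\overline{u_i}$ (writing $\int u_i\log(u_i/u_{i,\infty})-u_i+u_{i,\infty}$ as a relative entropy with respect to $\overline{u_i}$ plus a zero-dimensional term), applies the classical Pinsker inequality to the first piece and an elementary inequality to the second; your single pointwise estimate relative to $u_{i,\infty}$ avoids that decomposition entirely and is arguably more economical, at the (irrelevant) cost of a slightly worse explicit constant. The resulting constant $C_{CKP}=(2\mathcal M)^{-1}$ depends only on $\Omega$ and the masses $M_{ij}$, which is exactly the dependence the paper needs.
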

The Csisz\'ar-Kullback-Pinsker inequality will allow us to pass the stretched exponential convergence of the entropy, which is guaranteed due to our estimates, to a stretched exponential convergence to equilibrium for \eqref{eq:sys} in $L^1(\Omega)$-norm, and consequently {\it in any} $L^p(\Omega)$ norm.
\begin{proposition}\label{stretched_exp}
Consider the system of equations \eqref{eq:sys}, where $\Omega\in \R^N$, with $N\geq 3$, is a bounded domain with smooth boundary. Assume in addition that condition \eqref{quasi-uniform_theorem} is satisfied. Then for any $1<p<\infty$, there exist $C_{p,\br{u_{i,0}}_{i=1,\dots,4}},c_p>0$ and $\varepsilon_p>0$ such that
\begin{equation}\label{eq:stretched_Lp}
	\sum_{i=1}^4\norm{u_i(t) - u_{i,\infty}}_{L^p(\Omega)} \leq C_{p,\br{u_{i,0}}_{i=1,\dots,4}} e^{-c_p(1+t)^{\varepsilon_p}} \quad \text{ for all } \quad t\geq 0.
\end{equation}
The above remains valid when $N=1,2$, without any conditions on the diffusion coefficients.
\end{proposition}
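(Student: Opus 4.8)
The plan is to chain together three results already available in this paper: the conditional entropic convergence of Theorem~\ref{thm:convergence_general}, the a priori growth bounds of Propositions~\ref{Global1D} and~\ref{Global3D}, and the Csisz\'ar--Kullback--Pinsker inequality of Lemma~\ref{CKP}. The only genuinely new step is an elementary interpolation turning $L^1(\Omega)$ decay into $L^p(\Omega)$ decay.

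First I would check that hypothesis \eqref{eq:convergence_bound_condition} of Theorem~\ref{thm:convergence_general} is met. For $N\geq 3$ with \eqref{quasi-uniform_theorem} in force, Proposition~\ref{Global3D} furnishes explicit $\mu\geq 0$ and $0\leq\alpha<1$ such that $\norm{u_i(t)}_{L^\infty(\Omega)}\leq C(1+t)^\mu$ for every $i=1,\dots,4$ (combining \eqref{3D_u3} for $u_3$ with \eqref{3D_u124_Linfty} for $u_1,u_2,u_4$) and $\norm{u_i(t)}_{L^{N/2}(\Omega)}\leq C(1+t)^\alpha$ for $i=1,4$ (by \eqref{3D_u124}). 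For $N=1,2$, Proposition~\ref{Global1D}---which, thanks to Lemma~\ref{improved_duality_12D}, applies with no condition on the diffusion coefficients---gives the same $L^\infty$ bound, together with the $L^{1+\gamma}$ estimate \eqref{eq:bound_u_1_u_2} (whose exponent may be taken arbitrarily small) when $N=2$, while for $N=1$ mass conservation already yields the $L^1$ bound with $\alpha=0$. Since the conserved masses obey the equilibrium relations \eqref{equi_sys}, Theorem~\ref{thm:convergence_general} applies and, fixing $\varepsilon>0$ with $\alpha+\varepsilon<1$, gives
\begin{equation*}
H(\bm u(t)|\bm u_\infty)\leq H(\bm u_0|\bm u_\infty)\,e^{-C_{\alpha,\varepsilon}(1+t)^{1-\alpha-\varepsilon}},\qquad t\geq 0,
\end{equation*}
with $H(\bm u_0|\bm u_\infty)<\infty$ because $\bm u_0$ is bounded and $\bm u_\infty\in(0,\infty)^4$.

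Next I would insert this into Lemma~\ref{CKP} (whose mass hypothesis is again \eqref{equi_sys}) to obtain
\begin{equation*}
\sum_{i=1}^4\norm{u_i(t)-u_{i,\infty}}_{L^1(\Omega)}\leq \sqrt{4\,C_{CKP}^{-1}H(\bm u_0|\bm u_\infty)}\;e^{-\tfrac{1}{2} C_{\alpha,\varepsilon}(1+t)^{1-\alpha-\varepsilon}},
\end{equation*}
which is \eqref{eq:stretched_Lp} in the case $p=1$. For $1<p<\infty$ I would interpolate, $\norm{f}_{L^p(\Omega)}\leq\norm{f}_{L^1(\Omega)}^{1/p}\norm{f}_{L^\infty(\Omega)}^{1-1/p}$, using the crude bound $\norm{u_i(t)-u_{i,\infty}}_{L^\infty(\Omega)}\leq\norm{u_i(t)}_{L^\infty(\Omega)}+u_{i,\infty}\leq C(1+t)^\mu$ coming from the same propositions; this gives $\norm{u_i(t)-u_{i,\infty}}_{L^p(\Omega)}\leq C(1+t)^{\mu(1-1/p)}e^{-\tfrac{1}{2p}C_{\alpha,\varepsilon}(1+t)^{1-\alpha-\varepsilon}}$. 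Because $(1+t)^b e^{-a(1+t)^\beta}$ is bounded on $[0,\infty)$ for every $a,b,\beta>0$, the polynomial prefactor is absorbed at the price of, say, halving the rate once more, and summing over $i$ yields \eqref{eq:stretched_Lp} with $\varepsilon_p=1-\alpha-\varepsilon$ and $c_p=\tfrac{1}{4p}C_{\alpha,\varepsilon}$.

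I do not expect a serious obstacle here, since all the hard analysis has already been carried out in the previous sections; the only point demanding attention is the bookkeeping of constants---verifying that the exponents $\mu$ and $\alpha<1$ produced by Propositions~\ref{Global1D} and~\ref{Global3D} are admissible in Theorem~\ref{thm:convergence_general}, that the resulting rates stay strictly positive and independent of $t$, and that all constants depend only on the problem parameters and the initial data, as claimed.
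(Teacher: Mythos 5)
Your proposal is correct and follows essentially the same route as the paper: verify the hypotheses of Theorem~\ref{thm:convergence_general} via Propositions~\ref{Global1D} and~\ref{Global3D}, pass the stretched exponential entropy decay to $L^1(\Omega)$ through Lemma~\ref{CKP}, and interpolate against the algebraic $L^\infty(\Omega)$ growth, absorbing the polynomial prefactor by slightly reducing the stretched exponential rate. The only (harmless) difference is bookkeeping: your CKP step keeps the square root explicitly, which is in fact slightly more careful than the paper's own display.
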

\begin{proof}
When $N\geq 3$, using Proposition \ref{Global3D}, we conclude that the estimates \eqref{3D_u3}, \eqref{3D_u124_Linfty} and \eqref{3D_u124} are valid for some $0<\alpha<1$ and $\mu>0$. Invoking Theorem \ref{thm:convergence_general}, we find that
\begin{equation*}
	H(\bm{u}(t)|\bm{u}_\infty) \leq H(\bm{u}_0|\bm{u}_\infty)e^{-C(\alpha,\varepsilon)(1+t)^{1-\alpha-\varepsilon}}
\end{equation*}
for any $0<\varepsilon<1-\alpha$ and some $C(\alpha,\varepsilon)>0$. Applying the Csisz\'ar-Kullback-Pinsker inequality we have that
\begin{equation}\label{eq:L1_entropy_bound}
\sum_{i=1}^4\norm{u_i(t) - u_{i,\infty}}_{L^1(\Omega)} \leq C_{CKP}^{-1}H(\bm{u}_0|\bm{u}_\infty)e^{-C(\alpha,\varepsilon)(1+t)^{1-\alpha-\varepsilon}} \quad \text{ for all } \quad t\geq 0.
\end{equation}
Using the fact that 
$$\norm{u}_{L^p\pa{\Omega}} \leq \norm{u}_{L^\infty\pa{\Omega}}^{1-\frac{1}{p}}\norm{u}^{\frac{1}{p}}_{L^1\pa{\Omega}}$$
together with \eqref{3D_u3}, \eqref{3D_u124_Linfty} and \eqref{eq:L1_entropy_bound} we have that for any $1<p<+\infty$
\begin{align*}
	\sum_{i=1}^4\norm{u_i(t) - u_{i,\infty}}_{L^p(\Omega)} 
	\leq &C_{p,\br{u_{i,0}}_{i=1,\dots,4}}(1+t)^{\mu\left(1 - \frac 1p \right)}e^{-\frac{C(\alpha,\varepsilon)}{p}(1+t)^{1-\alpha-\varepsilon}}\\
	&\leq \mathcal{C}_{p,\br{u_{i,0}}_{i=1,\dots,4},\mu,\alpha,\epsilon}e^{-\frac{C(\alpha,\varepsilon)}{p}(1+t)^{\epsilon_p}},
\end{align*}
for any fixed $0<\varepsilon_p < 1-\alpha-\varepsilon$. This concludes the proof when $N\geq 3$. The cases $N=1,2$ follow from Proposition \ref{Global1D}, which has no conditions on the diffusion coefficients, and the exact same method.
\end{proof}
The above proposition, and the structure of \eqref{eq:sys}, will allow us to boost ourselves even further and obtain a stretched exponential convergence to equilibrium in $L^\infty(\Omega)$ norm for $u_1, u_2$ and $u_3$.

A key observation to achieve this is that
\begin{equation}\label{eq:f_estimation}
\begin{aligned}
f(\bm{u})&=u_1u_2-u_3u_4\\
&=\pa{u_1-u_{1,\infty}}u_2 + u_{1,\infty}\pa{u_2-u_{2,\infty}}
-\pa{u_3-u_{3,\infty}}u_4 - u_{3,\infty}\pa{u_4-u_{4,\infty}},
\end{aligned}
\end{equation}
where we used the fact that $u_{1,\infty}u_{2,\infty}=u_{3,\infty}u_{4,\infty}$.
\begin{proposition}\label{stretched_exp_1}
Under the same conditions of Proposition \ref{stretched_exp} there exist $C,c>0$ and $\varepsilon_0>0$ such that
\begin{equation}\label{eq:stretched_exp_1}
\norm{u_i(t) - u_{i,\infty}}_{L^\infty(\Omega)} \leq Ce^{-c(1+t)^{\varepsilon_0}} \quad \text{ for } \quad i=1,2,3.
\end{equation}
\end{proposition}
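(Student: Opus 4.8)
\emph{Proof idea.} The plan is to upgrade the stretched-exponential $L^p$-decay obtained in Proposition \ref{stretched_exp} to an $L^\infty$-decay for the three \emph{diffusive} components by a short-time Duhamel argument combined with the ultracontractive smoothing of the Neumann heat semigroup. Write $v_i = u_i - u_{i,\infty}$ for $i=1,2,3$; since $u_{i,\infty}$ is a constant, each $v_i$ solves $\partial_t v_i - d_i\Delta v_i = \pm f(\bm u)$ on $\Omega$ with homogeneous Neumann boundary conditions, where $f(\bm u)=u_1u_2-u_3u_4$ and the sign is $+$ for $i=1,2$ and $-$ for $i=3$.

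First I would estimate $\norm{f(\bm u(t))}_{L^p(\Omega)}$ for a fixed, suitably large exponent $p$. Using the decomposition \eqref{eq:f_estimation}, Hölder's inequality, the polynomial-in-time bounds \eqref{3D_u3} and \eqref{3D_u124_Linfty} (which give $\norm{u_2(t)}_{L^{2p}(\Omega)}+\norm{u_4(t)}_{L^{2p}(\Omega)} \leq C(1+t)^\mu$), and the stretched-exponential decay from Proposition \ref{stretched_exp} applied with exponent $2p$, one obtains
\begin{equation*}
\norm{f(\bm u(t))}_{L^p(\Omega)} \leq C(1+t)^\mu e^{-c(1+t)^{\varepsilon_{2p}}} \leq C' e^{-c'(1+t)^{\varepsilon_0}}
\end{equation*}
for any $0<\varepsilon_0<\varepsilon_{2p}$, the last step absorbing the algebraic prefactor into a slightly smaller stretched exponent.

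Next, for $t\geq1$ I would use Duhamel's formula on the window $[t-1,t]$,
\begin{equation*}
v_i(t) = e^{d_i\Delta}v_i(t-1) \pm \int_{t-1}^t e^{d_i(t-s)\Delta} f(\bm u(s))\,ds,
\end{equation*}
where $(e^{\tau\Delta})_{\tau\geq 0}$ denotes the Neumann heat semigroup. Invoking the standard smoothing bound $\norm{e^{\tau\Delta}g}_{L^\infty(\Omega)} \leq C(1+\tau^{-N/(2p)})\norm{g}_{L^p(\Omega)}$, whose kernel $\tau^{-N/(2p)}$ is integrable on $[0,1]$ precisely when $p>N/2$ (so fix such a $p$; any $p>1$ works when $N=1,2$), the first term is controlled by $C\norm{v_i(t-1)}_{L^p(\Omega)} \leq Ce^{-c(1+t)^{\varepsilon_p}}$ via Proposition \ref{stretched_exp}, while the integral term is bounded by $\pa{\sup_{s\in[t-1,t]}\norm{f(\bm u(s))}_{L^p(\Omega)}}\int_0^1 C(1+(d_i\sigma)^{-N/(2p)})\,d\sigma \leq C'' e^{-c''(1+t)^{\varepsilon_0}}$. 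For $t\in[0,1]$ the conclusion is immediate from the $L^\infty$ bounds of Propositions \ref{Global3D} and \ref{Global1D}. Combining the two regimes and shrinking $\varepsilon_0$ below $\min\{\varepsilon_p,\varepsilon_{2p}\}$ gives \eqref{eq:stretched_exp_1}.

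The Hölder bookkeeping and the semigroup estimate are routine; the point requiring genuine care is that, since we do \emph{not yet} possess a uniform-in-time $L^\infty$ bound, we cannot simply bootstrap the equation — the stretched-exponential decay in $L^p$ has to be shown to outpace the algebraic growth of $\norm{u_2}_{L^\infty}$ and $\norm{u_4}_{L^\infty}$, which is exactly what forces the slight loss in the stretched exponent and the requirement $p>N/2$. I expect the main (relatively mild) obstacle to be making the localisation to $[t-1,t]$ and the absorption of the polynomial prefactors into the exponential uniform in $t$, rather than any conceptual difficulty.
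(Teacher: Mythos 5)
Your proposal is correct and follows essentially the same route as the paper: a Duhamel representation over a unit time window, the $L^p\to L^\infty$ smoothing of the Neumann heat semigroup with $p>N/2$, the decomposition \eqref{eq:f_estimation} of the reaction term combined with the polynomially growing $L^\infty$ bounds and the stretched-exponential $L^p$ decay, and finally absorption of the algebraic prefactor into a slightly smaller stretched exponent. The only (immaterial) difference is bookkeeping: you split the products via Hölder with two $L^{2p}$ norms, while the paper pairs an $L^\infty$ factor with an $L^p$ factor.
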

\begin{proof}
	For $i=1,2,3$, we denote by $S_i(t) = e^{d_i\Delta t}$ the heat semigroup generated by the operator $-d_i\Delta$ with homogeneous Neumann boundary condition. We will use the classical estimate
	\begin{equation}\label{semigroup}
		\norm{S_i(t)u_0}_{L^\infty(\Omega)} \leq C_{i}t^{-\frac{N}{2p}}\norm{u_0}_{L^p(\Omega)} \quad \text{ for all } \quad t>0,
	\end{equation}
	which can be found in e.g. \cite[Eq. (1.15), page 274]{Tay13}. Denoting by $f_1=-f$, defined in \eqref{eq:f_estimation}, we see that
	\begin{equation*}
		\partial_t(u_1 - u_{1,\infty})(x,t) - d_i\Delta(u_1 - u_{1,\infty})(x,t) = f_1(\bm{u}(x,t))
	\end{equation*}
and as such, using Duhamel's formula on $[t,t+1]$, we see that
	\begin{equation*}
	\begin{aligned}
		u_1(t+1,x) - u_{1,\infty} &= S_i(1)(u_1(x,t) - u_{1,\infty}) + \int_t^{t+1}S_i\pa{t+1-s}f_1(\bm{u}(s,x)ds\\
		&=S_i(1)(u_1(x,t) - u_{1,\infty}) + \int_0^{1}S_i\pa{1-s}f_1(\bm{u}(t+s,x)ds.
		\end{aligned}
	\end{equation*}
Using \eqref{eq:stretched_Lp} and \eqref{semigroup} we find that
	\begin{equation}\label{b6}
	\begin{aligned}
		&\norm{u_1(t+1) - u_{1,\infty}}_{L^\infty(\Omega)}\\
		&\leq C_{i}\norm{u_1(t) - u_{1,\infty}}_{L^p(\Omega)} + \int_0^1\norm{S_i\pa{1-s}f_1(\bm{u}(t+s))}_{L^\infty(\Omega)}ds\\
		&\leq C_{i,p}e^{-c_p(1+t)^{\varepsilon_p}} + C_{i}\int_0^1(1-s)^{-\frac{N}{2p}}\norm{f_1(\bm{u}(t+s))}_{L^p(\Omega)}ds.
	\end{aligned}
	\end{equation}
Equality \eqref{eq:f_estimation}, together with \eqref{eq:stretched_Lp} and the $L^\infty$ bounds on $\br{u_i}_{i=1,\dots,4}$, show that
	\begin{align*}
		&\norm{f_1(\bm{u}(t+s)) }_{L^p(\Omega)}^p\leq \norm{u_2(t+s)}_{L^\infty(\Omega)}^p\norm{u_1(t+s) - u_{1,\infty}}_{L^p(\Omega)}^p + u_{1,\infty}^p\norm{u_2(t+s) - u_{2,\infty}}_{L^p(\Omega)}^p\\
		& + \norm{u_4(t+s)}_{L^\infty(\Omega)}^p\norm{u_3(t+s) - u_{3,\infty}}_{L^p(\Omega)}^p + u_{3,\infty}^p\norm{u_4(t+s) - u_{4,\infty}}_{L^p(\Omega)}^p\\
		& \quad \leq C_{p}(1+t+s)^{\mu p}e^{-c_p(1+t+s)^{\varepsilon_p}}
	\end{align*}
	 Inserting this into \eqref{b6} yields
	\begin{align*}		
		&\norm{u_1(t+1) - u_{1,\infty}}_{L^\infty(\Omega)} \leq C_{i,p}e^{-c_p(1+t)^{\varepsilon_p}} + C_p\int_0^1(1-s)^{-\frac{N}{2p}}(1+t+s)^{\mu}e^{-c_p(1+t+s)^{\varepsilon_p}}ds\\
		&\leq Ce^{-C(1+t)^{\varepsilon_p}}\rpa{1+(2+t)^{\mu}\int_0^1(1-s)^{-\frac{N}{2p}}ds}.
	\end{align*}
As for any $p > N/2$ we have that $\int_0^1(1-s)^{-\frac{N}{2p}}ds < +\infty$ we conclude that choosing $p_0>\frac{N}{2}$ yields
	\begin{equation*}
		\norm{u_1(t+1) - u_{1,\infty}}_{L^\infty(\Omega)} \leq C_{p_0,\br{u_i}_{i=1,\dots,4}}e^{-c_{p_0}(1+t)^{\varepsilon_{p_0}}}\pa{1+(2+t)^{\mu}} \leq C_{p_0,\br{u_i}_{i=1,\dots,4}}e^{-c_{p_0}(1+t)^{\varepsilon_0}}
	\end{equation*}
	for any $0<\varepsilon_0 < \varepsilon_{p_0}$. The same argument is valid for $u_2$ and $u_3$ (here we use $f$ and not $f_1$), which concludes the proof of the proposition.
\end{proof}
\begin{corollary}\label{cor:extra_bondedness}
Under the assumption of Proposition \ref{stretched_exp} we have that 
$$\sup_{t\geq 0}\norm{u_i}_{L^\infty\pa{\Omega}} < +\infty,\quad i=1,2,3.$$
\end{corollary}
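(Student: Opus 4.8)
The plan is to deduce this immediately from the stretched exponential decay established in Proposition \ref{stretched_exp_1}. For $i=1,2,3$ that proposition gives constants $C,c>0$ and $\varepsilon_0>0$ with
\begin{equation*}
\norm{u_i(t)-u_{i,\infty}}_{L^\infty(\Omega)} \leq Ce^{-c(1+t)^{\varepsilon_0}} \quad \text{ for all } \quad t\geq 0.
\end{equation*}
The right-hand side is a decreasing function of $t$, hence bounded above by its value at $t=0$, namely $Ce^{-c}$.

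Therefore, first I would apply the triangle inequality in $L^\infty(\Omega)$ to write $\norm{u_i(t)}_{L^\infty(\Omega)} \leq u_{i,\infty} + \norm{u_i(t)-u_{i,\infty}}_{L^\infty(\Omega)}$, where $u_{i,\infty}$ is the (constant, finite) equilibrium value from \eqref{equi_sys}. Combining this with the bound above yields
\begin{equation*}
\sup_{t\geq 0}\norm{u_i(t)}_{L^\infty(\Omega)} \leq u_{i,\infty} + Ce^{-c} < +\infty \quad \text{ for } \quad i=1,2,3,
\end{equation*}
which is exactly the claim. There is essentially no obstacle here: the corollary is a direct packaging of Proposition \ref{stretched_exp_1}, recording the uniform-in-time $L^\infty$ bound that is implicitly contained in the (stronger) stretched exponential convergence, and which will be needed as an input when we return to the entropic inequality \eqref{indirect_ineq_modified} to upgrade it to the form \eqref{IDE}.

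One small point worth stating explicitly in the write-up is that the hypotheses of Proposition \ref{stretched_exp} already guarantee, via Propositions \ref{Global1D} and \ref{Global3D}, the existence of the global classical solution and of the equilibrium $\bm{u}_\infty\in(0,\infty)^4$, so all quantities appearing above are well defined; no additional regularity or positivity argument is required.
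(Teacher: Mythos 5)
Your proposal is correct and matches the paper's argument, which likewise deduces the corollary immediately from the stretched exponential bound of Proposition \ref{stretched_exp_1} (the paper notes only that the bound was already known for $u_3$ from Lemma \ref{lem:L_infty_bound_on_u_3}). Your write-up merely makes the triangle-inequality step explicit, which is fine.
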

\begin{proof}
This follows immediately from \eqref{eq:stretched_exp_1} (though we already knew this for $u_3$).
\end{proof}
Before proving our main theorems we turn our attention to the only missing function: $u_4$.
\begin{proposition}\label{Linf_u4}
Under the assumption of Proposition \ref{stretched_exp} we have that 
\begin{equation*}
\sup_{t\geq 0}\norm{u_4(t)}_{L^\infty(\Omega)} <+\infty.
\end{equation*}
\end{proposition}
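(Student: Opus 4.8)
The plan is to leverage what has already been established in Proposition \ref{stretched_exp_1} and Corollary \ref{cor:extra_bondedness}, namely that $u_1, u_2, u_3$ are uniformly bounded in $L^\infty(\Omega)$ and, moreover, that $u_3(t) \to u_{3,\infty} > 0$ in $L^\infty(\Omega)$. The key point is that the evolution equation for $u_4$, namely $\partial_t u_4 = u_1 u_2 - u_3 u_4$, is --- for each fixed $x$ --- a scalar ODE whose linear term $-u_3 u_4$ becomes a genuine exponential damping as soon as $u_3$ stays bounded below by a positive constant.

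Concretely, I would first use the convergence $\norm{u_3(t) - u_{3,\infty}}_{L^\infty(\Omega)} \to 0$ together with $u_{3,\infty} > 0$ (see \eqref{equi_sys}) to fix some $T_0 \geq 0$ with $u_3(x,t) \geq u_{3,\infty}/2$ for all $x \in \Omega$ and all $t \geq T_0$, and I would set $C_0 := \big(\sup_{t \geq 0}\norm{u_1(t)}_{L^\infty(\Omega)}\big)\big(\sup_{t \geq 0}\norm{u_2(t)}_{L^\infty(\Omega)}\big)$, which is finite by Corollary \ref{cor:extra_bondedness}, so that $u_1 u_2 \leq C_0$ on $\Omega \times (0,\infty)$. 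Then, for each fixed $x \in \Omega$ and $t \geq T_0$, the fourth equation of \eqref{eq:sys} combined with these two bounds yields the scalar differential inequality
$$\partial_t u_4(x,t) \leq C_0 - \frac{u_{3,\infty}}{2}\, u_4(x,t).$$
Multiplying by $e^{\frac{u_{3,\infty}}{2}t}$, integrating from $T_0$ to $t$, and using $u_4 \geq 0$, this gives
$$u_4(x,t) \leq u_4(x,T_0)\,e^{-\frac{u_{3,\infty}}{2}(t-T_0)} + \frac{2C_0}{u_{3,\infty}} \leq \norm{u_4}_{L^\infty(\Omega_{T_0})} + \frac{2C_0}{u_{3,\infty}} \qquad \text{for all } t \geq T_0,$$
where $\norm{u_4}_{L^\infty(\Omega_{T_0})} < \infty$ because $u_4$ is a classical solution and hence bounded on the compact time interval $[0,T_0]$. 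Since trivially $\norm{u_4(t)}_{L^\infty(\Omega)} \leq \norm{u_4}_{L^\infty(\Omega_{T_0})}$ for $t \in [0,T_0]$, combining the two ranges of $t$ gives $\sup_{t \geq 0}\norm{u_4(t)}_{L^\infty(\Omega)} < +\infty$.

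I do not expect a genuine obstacle here: the whole content of the argument is the dissipativity observation above. The only two points requiring a little care are the existence of $T_0$ (which rests on the $L^\infty$-convergence of $u_3$ proved in Proposition \ref{stretched_exp_1}) and the finiteness of $\norm{u_4}_{L^\infty(\Omega_{T_0})}$ (which is part of the global well-posedness already in hand, Propositions \ref{Global1D} and \ref{Global3D}). It is worth remarking that the same computation in fact shows more --- that $u_4(t)$ remains in a bounded neighbourhood of $u_{4,\infty}$, and, by feeding back the (stretched-exponential) decay of $u_1u_2 - u_{1,\infty}u_{2,\infty}$ and of $u_3 - u_{3,\infty}$, that $u_4(t) \to u_{4,\infty}$ as well --- but for the present statement the crude bound is all that is needed.
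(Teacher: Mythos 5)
Your argument is correct and is essentially identical to the paper's own proof: both use the $L^\infty$-convergence of $u_3$ to fix a time after which $u_3 \geq u_{3,\infty}/2$, turn the $u_4$-equation into a damped scalar ODE with source $u_1u_2$ bounded by the uniform $L^\infty$ bounds on $u_1,u_2$, and integrate. The only cosmetic difference is that you bound $u_1u_2$ by a constant before integrating while the paper keeps it under the integral; the outcome is the same.
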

\begin{proof}
	From Proposition \ref{stretched_exp_1} we have 
	\begin{equation*}
		\norm{u_3(t) - u_{3,\infty}}_{L^\infty(\Omega)} \leq Ce^{-ct^{\varepsilon_0}}.
	\end{equation*}
	for some fixed constants $C,\epsilon_0>0$. We can find $T_1>0$ large enough such that 
	$$Ce^{-ct^{\varepsilon_0}} \leq \frac 12u_{3,\infty}$$ 
for all $t\geq T_1$, for instance
$$T_1 = \rpa{\frac 1c\ln(2C/u_{3,\infty})}^{1/\varepsilon_0}.$$ 
Clearly, for all $t\geq T_1$ we have that $u_3(x,t) \geq \frac 12u_{3,\infty}$, and as such 
	\begin{equation*}
		\partial_t u_4(x,t) + \frac 12u_{3,\infty}u_4(x,t) \leq \partial_t u_4(x,t) + u_3(x,t)u_4(x,t) = u_1(x,t)u_2(x,t).
	\end{equation*}
Gronwall's lemma, with $\lambda:= \frac 12 u_{3,\infty}$, assures us that 
	\begin{align*}
		u_4(x,t) &\leq e^{-\lambda (t-T_1)}u_4(T_1,x) + \int_{T_1}^te^{-\lambda(t-s)}u_1(s,x)u_2(s,x)ds\\
		&\leq e^{-\lambda (t-T_1)}u_4(T_1,x) + \pa{\sup_{t\geq 0}\norm{u_1(t)}_{L^\infty\pa{\Omega}}}\pa{\sup_{t\geq 0}\norm{u_2(t)}_{L^\infty\pa{\Omega}}}\int_{T_1}^te^{-\lambda (t-s)}ds\\
		&\leq e^{-\lambda (t-T_1)}u_4(T_1,x) + \frac{\pa{\sup_{t\geq 0}\norm{u_1(t)}_{L^\infty\pa{\Omega}}}\pa{\sup_{t\geq 0}\norm{u_2(t)}_{L^\infty\pa{\Omega}}}}{\lambda}
	\end{align*}
The non-negativity of $u_4$ implies that 
$$\sup_{t\geq 0}\norm{u_4(t)}_{L^\infty\pa{\Omega}} \leq \sup_{0\leq t\leq T_1}\norm{u_4(t)}_{L^\infty\pa{\Omega}}+\frac{\pa{\sup_{t\geq 0}\norm{u_1(t)}_{L^\infty\pa{\Omega}}}\pa{\sup_{t\geq 0}\norm{u_2(t)}_{L^\infty\pa{\Omega}}}}{\lambda}$$
and since 
$$\sup_{0\leq t\leq T_1}\norm{u_4(t)}_{L^\infty\pa{\Omega}}<C_{u_{4,0}}\pa{1+T_1}^\mu.$$
The proof is complete.
\end{proof}
With these tools at hand, we are now ready to prove Theorem \ref{thm:main} and \ref{thm:main-3D}.
\begin{proof}[Proof of Theorems \ref{thm:main} and \ref{thm:main-3D}]
The existence of a classical global solution and its non-negativity are covered in Propositions \ref{Global1D} and \ref{Global3D}. We now turn our attention to the $L^\infty\pa{\Omega}$ convergence.\\
	Corollary \ref{cor:extra_bondedness} and Proposition \ref{Linf_u4}, together with Theorem \ref{thm:convergence_general} yield the exponential entropic convergence 
	\begin{equation*}
		H(\bm{u}(t)|\bm{u}_\infty) \leq Ce^{-\lambda_0t}
	\end{equation*}
	for some $C, \lambda_0>0$. Using the Csisz\'ar-Kullback-Pinsker inequality \eqref{CKP} again leads to 
	\begin{equation}\label{exp_L1}
		\sum_{i=1}^4\norm{u_i(t) - u_{i,\infty}}_{L^1(\Omega)}^2 \leq Ce^{-\lambda_0t},
	\end{equation}
	and consequently, for any $1<p<\infty$ 
	\begin{equation}\label{exp_Lp}
		\norm{u_i(t)-u_{i,\infty}}_{L^p(\Omega)} \leq \norm{u_i(t)-u_{i,\infty}}_{L^{\infty}(\Omega)}^{1-\frac 1p}\norm{u_i(t)-u_{i,\infty}}_{L^1(\Omega)}^{\frac 1p} \leq Ce^{-\frac{\lambda_0}{2p}t}
	\end{equation}
	thanks to the uniform in time bounds on $L^\infty\pa{\Omega}$ norms of all functions involved. To finally obtain the exponential convergence in $L^\infty(\Omega)$ norm for $\br{u_i}_{i=1,2,3}$ we use similar arguments to Proposition \ref{stretched_exp_1}.
The function $f$, defined in \eqref{eq:f_estimation}, now has the bound
$$\norm{f(t)}_{L^p\pa{\Omega}}\leq \mathcal{C}e^{-\frac{\lambda_0}{2p}t}$$
and as such \eqref{b6} can be replaced with
	\begin{align*}
	&\norm{u_i(t+1) - u_{i,\infty}}_{L^\infty(\Omega)}
	\leq Ce^{-\frac{\lambda_0}{2p}t}\rpa{1+\int_0^1(1-s)^{-\frac{N}{2p}}ds}\leq Ce^{-\frac{\lambda_0}{2p}t},\quad i=1,2,3
	\end{align*}
if we choose $p>\frac N2$.
We are only left to deal with $u_4$. With $f$ as in \eqref{eq:f_estimation} we see that 
$$\partial_t\pa{u_4-u_{4,\infty}}(x,t) = f(\bm{u}(x,t)),$$
which we can rewrite as
$$\partial_t\pa{u_4-u_{4,\infty}}(x,t)+ u_{3,\infty}\pa{u_4(x,t)-u_{4,\infty}} = \pa{u_1(x,t)-u_{1,\infty}}u_2(x,t)$$
$$ + u_{1,\infty}\pa{u_2(x,t)-u_{2,\infty}}
-\pa{u_3(x,t)-u_{3,\infty}}u_4(x,t) $$
Integrating the above we find that
$$u_4(x,t)-u_{4,\infty} = e^{-u_{3,\infty}t}\pa{u_{4,0}(x)-u_{4,\infty}}$$
$$+\int_{0}^t e^{-u_{3,\infty}\pa{t-s}}\pa{\pa{u_1(s,x)-u_{1,\infty}}u_2(s,x)+ u_{1,\infty}\pa{u_2(s,x)-u_{2,\infty}}
-\pa{u_3(s,x)-u_{3,\infty}}u_4(s,x)}ds$$
which implies that 
$$\norm{u_4(x,t)-u_{4,\infty}}_{L^\infty\pa{\Omega}} \leq e^{-u_{3,\infty}t}\norm{u_{4,0}(x)-u_{4,\infty}}_{L^\infty\pa{\Omega}}+Ce^{-u_{3,\infty}t}\int_{0}^t e^{\pa{u_{3,\infty}-\lambda}s}ds$$
where $\lambda>0$ is such that
$$\norm{u_i(t)-u_{i,\infty}}_{L^\infty\pa{\Omega}} \leq \mathcal{C}e^{-\lambda t},\quad i=1,2,3$$ 
and where we have used the uniform in time $L^\infty\pa{\Omega}$ bounds of $\br{u_i}_{i=1,\dots,4}$. Thus
$$\norm{u_4(x,t)-u_{4,\infty}}_{L^\infty\pa{\Omega}}\leq e^{-u_{3,\infty}t}\norm{u_{4,0}(x)-u_{4,\infty}}_{L^\infty\pa{\Omega}}+\begin{cases}\frac{e^{-\lambda t}-e^{-u_{3,\infty}t}}{u_{3,\infty}-\lambda} & \lambda\not=u_{3,\infty} \\
te^{-u_{3,\infty}t} & \lambda=u_{3,\infty}\end{cases}$$
which shows that for any $\lambda_1 < \min\pa{\lambda, u_{3,\infty}}$, we have that 
$$\norm{u_4(x,t)-u_{4,\infty}}_{L^\infty\pa{\Omega}}\leq C_{\br{u_{i,0}}_{i=1,\dots,3},\lambda_1}e^{-\lambda_1 t}.$$
This concludes the proof.
\end{proof}

\section{Final Remarks}\label{sec:remarks}
We feel that our work is just the beginning of further investigation of systems of the form \eqref{eq:sys_full} and \eqref{eq:sys}, especially in light of the indirect diffusion effect we've shown, and the important interplay between the entropy of the system and properties of the solutions to it.

\medskip
There are many additional interesting open problems, which we hope to address in the near future. A few examples are:
\begin{itemize}
\item \textit{Is it possible to obtain global bounded solutions in dimensions higher than $2$ without assuming additional assumption on the diffusion coefficients?} A positive answer to this question will potentially extend Theorem \ref{thm:main-3D}, and perhaps allow us to consider similar techniques to deal with the more complicated systems \eqref{eq:general_chemical_rd}.
\item \textit{How does the indirect diffusion effect work for more general systems?} For example in systems of the form \eqref{eq:general_chemical_rd} with one or more species without diffusion. We believe that our approach in this paper can be reused once enough estimates on the solutions are available.
\item \textit{Can one obtain an indirect diffusion effect of the form \eqref{IDE} in our setting, where the constant $\beta$ only depends on quantities that are conserved?} While it is interesting, and enlightening, to see how the interaction between the entropic trend to equilibrium and bounds on the solutions lead to showing that \eqref{IDE} is valid for a constant $\beta$ that depends on the $L^\infty\pa{\Omega}-$norm, finding a direct method to show  \eqref{IDE} with the aforementioned $\beta$ gives hope to extending our results to more complicated systems. 
\end{itemize}

\medskip
\par{\bf Acknowledgements:} The third author is supported by the International Research Training Group IGDK 1754 "Optimization and Numerical Analysis for Partial Differential Equations with Nonsmooth Structures", funded by the German Research Council (DFG)  project number 188264188/GRK1754 and the Austrian Science Fund (FWF) under
	grant number W 1244-N18. This work is partially supported by NAWI Graz.
	
The first and third authors gratefully acknowledge the support of the Hausdorff Research Institute for Mathematics (Bonn), through the Junior Trimester Program on Kinetic Theory.

\end{document}